\documentclass[11pt,reqno]{preprint}
\usepackage[full]{textcomp}
\usepackage[osf]{newtxtext}
\usepackage{comment}

\usepackage{amssymb}
\usepackage{mathtools}
\usepackage{hyperref}
\usepackage{breakurl}
\usepackage{mhenvs}
\usepackage{mhequ}
\usepackage{mhsymb}
\usepackage{booktabs}
\usepackage{tikz}
\usepackage{mathrsfs}
\usepackage{longtable}
\usepackage{halloweenmath}
\usepackage{scalerel}
\usepackage{cprotect}
\usepackage{multirow}
\usepackage{colortbl}
\usepackage{makecell}
\usepackage{resizegather}

\usepackage{comment}
\usepackage{wasysym}
\usepackage{centernot}
\usepackage{enumitem}
\usepackage{stackrel}

\usetikzlibrary{shapes.misc}
\usetikzlibrary{shapes.symbols}
\usetikzlibrary{shapes.geometric}
\usetikzlibrary{decorations}
\usetikzlibrary{decorations.markings}
\usetikzlibrary{patterns} 
\usetikzlibrary{snakes} 

\usepackage{subfig}
\usetikzlibrary{plotmarks}
\usetikzlibrary{decorations.pathreplacing}
\usetikzlibrary{decorations.pathmorphing}
\usetikzlibrary{calc}
\usetikzlibrary{external}
\usetikzlibrary{arrows.meta}


\let\oldskull\skull
\def\skull{\mathord{\oldskull}}

\DeclareMathAlphabet{\mathbbm}{U}{bbm}{m}{n}

\overfullrule=3mm
\marginparwidth=3.3cm

\DeclareFontFamily{U}{BOONDOX-calo}{\skewchar\font=45 }
\DeclareFontShape{U}{BOONDOX-calo}{m}{n}{
  <-> s*[1.05] BOONDOX-r-calo}{}
\DeclareFontShape{U}{BOONDOX-calo}{b}{n}{
  <-> s*[1.05] BOONDOX-b-calo}{}
\DeclareMathAlphabet{\mcb}{U}{BOONDOX-calo}{m}{n}
\SetMathAlphabet{\mcb}{bold}{U}{BOONDOX-calo}{b}{n}

\setlist{noitemsep,topsep=4pt}

\makeatletter
\def\DeclareSymbol#1#2#3{%
	\expandafter\gdef\csname MH@symb@#1\endcsname{\tikzsetnextfilename{symbol#1}%
	\tikz[baseline=#2,scale=0.15,draw=symbols,line join=round,line cap=round]{#3}}%
	\expandafter\gdef\csname MH@symb@#1s\endcsname{\scalebox{0.75}{\tikzsetnextfilename{symbol#1}%
	\tikz[baseline=#2,scale=0.15,draw=symbols,line join=round,line cap=round]{#3}}}%
	\expandafter\gdef\csname MH@symb@#1ss\endcsname{\scalebox{0.65}{\tikzsetnextfilename{symbol#1}%
	\tikz[baseline=#2,scale=0.15,draw=symbols,line join=round,line cap=round]{#3}}}%
	}
\def\<#1>{\ifthenelse{\boolean{mmode}}{\mathchoice{\csname MH@symb@#1\endcsname}{\csname MH@symb@#1\endcsname}{\csname MH@symb@#1s\endcsname}{\csname MH@symb@#1ss\endcsname}}{\csname MH@symb@#1\endcsname}}
\makeatother

\makeatletter 
\newcommand*{\bigcdot}{}
\DeclareRobustCommand*{\bigcdot}{%
  \mathbin{\mathpalette\bigcdot@{}}%
}
\newcommand*{\bigcdot@scalefactor}{.5}
\newcommand*{\bigcdot@widthfactor}{1.15}
\newcommand*{\bigcdot@}[2]{%
  \sbox0{$#1\vcenter{}$}
  \sbox2{$#1\cdot\m@th$}%
  \hbox to \bigcdot@widthfactor\wd2{%
    \hfil
    \raise\ht0\hbox{%
      \scalebox{\bigcdot@scalefactor}{%
        \lower\ht0\hbox{$#1\bullet\m@th$}%
      }%
    }%
    \hfil
  }%
}
\makeatother

\def\act{\bigcdot}


\newcommand{\cut}{\mathfrak{C}}

\newcommand{\mrd}{\mathop{}\!\mathrm{d}}

\def\SNorm{\Sigma}
\def\SState{\mathcal{S}^\ym}


\newcommand{\mcH}{\mathcal{H}}

\newcommand{\mcC}{\mathcal{C}}

\newcommand{\mcL}{\mathcal{L}}

\newcommand{\mcI}{\mathcal{I}}
\newcommand{\mcF}{\mathcal{F}}
\newcommand{\mcY}{\mathcal{Y}}

\newcommand{\mcX}{\mathcal{X}}



\newcommand{\T}{\mathbf{T}}

\def\err{\mathrm{err}}

\newcommand{\U}{\mathrm{U}}

\def\${|\!|\!|}

\def\id{\mathrm{id}}

\def\var#1{#1\textnormal{-var}}

\def\Hol#1{#1\textnormal{-H{\"o}l}}
\def\gr#1{#1\textnormal{-gr}}

\DeclareMathOperator{\hol}{\mathrm{hol}}


\def\scal#1{{\langle#1\rangle}}

\def\CR{\mathcal{R}}
\def\CF{\mathcal{F}}

\def\bone{\mathbf{1}}

\def\bgamma{\boldsymbol{\gamma}}
\def\bzeta{\boldsymbol{\zeta}}

\def\Poly{\mathop{\mathrm{Poly}}}

\def\sol{{\mathop{\mathrm{sol}}}}

\newcommand{\mfu}{\mathfrak{u}}

\newcommand{\mfR}{\mathfrak{R}}

\newcommand{\mfp}{\mathfrak{p}}

\newcommand{\mfG}{\mathfrak{G}}

\newcommand{\mfg}{\mathfrak{g}}

\def\cC{\mathscr{C}}

\def\cD{\mathscr{D}}


\def\YM{\textnormal{\tiny YM}}
\def\Higgs{\textnormal{\tiny {Higgs}}}

\def\H{\textnormal{\tiny {H}}}



\newcommand{\Ad}{\mathrm{Ad}}

\DeclareMathOperator{\Trace}{Tr}

\newcommand{\SYMH}{\mathrm{SYMH}}






\def\combplus[#1,#2,#3,#4]{\binom{#1\ {\scriptstyle #4} }{#2\ #3}}


\def\singlescalegenvert[#1,#2]{\hat{H}^{#2}_{#1}}
\def\multiscalegenvert[#1,#2]{H^{#2}_{#1}}

\def\moll{\chi}



\def\nr[#1]{\tilde{N}[#1]} 
\def\inn[#1]{\mathring{N}[#1]}
\def\nrinn[#1]{\hat{N}_{#1}} 
\def\nrmod[#1,#2]{\tilde{N}_{#1}(#2)}
\def\nrinnmod[#1,#2]{\hat{N}_{#1}(#2)}

\def\ident[#1]{\underline{#1}}

\def\mylink#1#2{\mathrel{\vbox{\offinterlineskip\ialign{%
    \hfil##\hfil\cr
    $\scriptscriptstyle#1$\cr
    \noalign{\kern0.1ex}
    $#2$\cr
}}}}
\def\mysublink[#1]#2#3{\mathrel{\vbox{\offinterlineskip\ialign{%
    \hfil##\hfil\cr
    $\scriptscriptstyle#2$\cr
    \noalign{\kern0.1ex}
    $#3$\cr
    \noalign{\kern-0.2ex}
    \smash{\raisebox{-\height}{\hbox{$\scriptscriptstyle #1$}}}\cr
    \noalign{\kern0.2ex}
}}}}


\def\fon[#1]{\cC_{#1}}




\def\mincompproj[#1]{\mfp_{#1}}

\def\Proj_#1{\mathop{\mathrm{Proj}_{#1}}}


\def\negrenorm[#1]{\mfR_{#1}}
\def\topnegrenorm[#1]{\overline{\mfR}_{#1}}

\def\quotedge[#1]{E^{q}_{#1}}

\def\posrenorm[#1]{\mcC_{#1}}
\def\topposrenorm[#1]{\overline{\mcC_{#1}}}
\def\cutsmod[#1]{\mathbb{C}_{+,#1}}

\def\fullcutsmod[#1]{\cut_{#1}}

\newcommand{\ym}{{\textnormal{\scriptsize \textsc{ym}}}}
\newcommand{\ymh}{{\textnormal{\scriptsize \textsc{ymh}}}}

\colorlet{symbols}{blue!30!black!50}
\colorlet{testcolor}{green!60!black}
\colorlet{darkblue}{blue!60!black}
\colorlet{darkgreen}{green!60!black}
\definecolor{darkergreen}{rgb}{0.0, 0.5, 0.0}

\definecolor{purple}{rgb}{0.55,0.05,0.8}

\colorlet{redkernel}{red!80}

\def\symbol#1{{\mathbf{#1}}}

\def\1{\mathbf{\symbol{1}}}

\makeatletter
\pgfdeclareshape{crosscircle}
{
  \inheritsavedanchors[from=circle] 
  \inheritanchorborder[from=circle]
  \inheritanchor[from=circle]{north}
  \inheritanchor[from=circle]{north west}
  \inheritanchor[from=circle]{north east}
  \inheritanchor[from=circle]{center}
  \inheritanchor[from=circle]{west}
  \inheritanchor[from=circle]{east}
  \inheritanchor[from=circle]{mid}
  \inheritanchor[from=circle]{mid west}
  \inheritanchor[from=circle]{mid east}
  \inheritanchor[from=circle]{base}
  \inheritanchor[from=circle]{base west}
  \inheritanchor[from=circle]{base east}
  \inheritanchor[from=circle]{south}
  \inheritanchor[from=circle]{south west}
  \inheritanchor[from=circle]{south east}
  \inheritbackgroundpath[from=circle]
  \foregroundpath{
    \centerpoint%
    \pgf@xc=\pgf@x%
    \pgf@yc=\pgf@y%
    \pgfutil@tempdima=\radius%
    \pgfmathsetlength{\pgf@xb}{\pgfkeysvalueof{/pgf/outer xsep}}%
    \pgfmathsetlength{\pgf@yb}{\pgfkeysvalueof{/pgf/outer ysep}}%
    \ifdim\pgf@xb<\pgf@yb%
      \advance\pgfutil@tempdima by-\pgf@yb%
    \else%
      \advance\pgfutil@tempdima by-\pgf@xb%
    \fi%
    \pgfpathmoveto{\pgfpointadd{\pgfqpoint{\pgf@xc}{\pgf@yc}}{\pgfqpoint{-0.707107\pgfutil@tempdima}{-0.707107\pgfutil@tempdima}}}
    \pgfpathlineto{\pgfpointadd{\pgfqpoint{\pgf@xc}{\pgf@yc}}{\pgfqpoint{0.707107\pgfutil@tempdima}{0.707107\pgfutil@tempdima}}}
    \pgfpathmoveto{\pgfpointadd{\pgfqpoint{\pgf@xc}{\pgf@yc}}{\pgfqpoint{-0.707107\pgfutil@tempdima}{0.707107\pgfutil@tempdima}}}
    \pgfpathlineto{\pgfpointadd{\pgfqpoint{\pgf@xc}{\pgf@yc}}{\pgfqpoint{0.707107\pgfutil@tempdima}{-0.707107\pgfutil@tempdima}}}
  }
}
\makeatother

\colorlet{greennode}{green!50!black}
\colorlet{rednode}{red!50!black}
\colorlet{lbluenode}{blue!25}
\colorlet{dbluenode}{blue}
\colorlet{orangenode}{orange}

\definecolor{connection}{rgb}{0.7,0.1,0.1}

\tikzset{
dot/.style={circle,fill=black,inner sep=0pt, minimum size=1mm},
root/.style={circle,fill=black!50,inner sep=0pt, minimum size=3mm},
        var/.style={circle,fill=black!10,draw=black,inner sep=0pt, minimum size=1.6mm},
        delta/.style={densely dotted},
        var1/.style={rectangle,fill=black!10,draw=black,inner sep=0pt, minimum size=1.6mm},
        var2/.style={diamond,fill=black!10,draw=black,inner sep=0pt, minimum size=2mm},
        kernel/.style={semithick,shorten >=2pt,shorten <=2pt},
       kernel1/.style={postaction={decorate,decoration={markings,mark=at position 0.45 with {\draw[-] (0,-0.08) -- (0,0.08);}}}},
        kernels/.style={snake=snake,segment amplitude=1pt,segment length=4pt},
        rho/.style={densely dashed,semithick,shorten >=2pt,shorten <=2pt},
           testfcn/.style={dotted,semithick,shorten >=2pt,shorten <=2pt},
           tau/.style={circle,inner sep=1pt,draw=black,fill=white,text=black,thin},
        renorm/.style={shape=circle,fill=white,inner sep=1pt},
        labl/.style={shape=rectangle,fill=white,inner sep=1pt},
        xi/.style={very thin,circle,fill=lbluenode,draw=symbols,inner sep=0pt,minimum size=1.2mm},
        xi1/.style={very thin,rectangle,fill=lbluenode,draw=symbols,inner sep=0pt,minimum size=1.2mm},
        xi2/.style={very thin,diamond,fill=lbluenode,draw=symbols,inner sep=0pt,minimum size=1.6mm},
        xigreen/.style={very thin,circle,fill=greennode,draw=symbols,inner sep=0pt,minimum size=1.2mm},
        xigreen1/.style={very thin,rectangle,fill=greennode,draw=symbols,inner sep=0pt,minimum size=1.2mm},
        xired/.style={very thin,circle,fill=rednode,draw=symbols,inner sep=0pt,minimum size=1.2mm},
        xilblue/.style={very thin,circle,fill=lbluenode,draw=symbols,inner sep=0pt,minimum size=1.2mm},
        xidblue/.style={very thin,circle,fill=dbluenode,draw=symbols,inner sep=0pt,minimum size=1.2mm},
        xiorange/.style={very thin,circle,fill=orangenode,draw=symbols,inner sep=0pt,minimum size=1.2mm},
        xix/.style={crosscircle,fill=lbluenode,draw=symbols,inner sep=0pt,minimum size=1.2mm},
 %
xix-green-red/.style={circle, fill=greennode!70!white,draw=rednode,inner sep=0pt,minimum size=1.6mm,append after command={node [inner sep=0pt,minimum size=0.8mm,thick, draw = rednode, cross out]{}}},
xix-green-red1/.style={rectangle, fill=greennode!70!white,draw=rednode,inner sep=0pt,minimum size=1.5mm,append after command={node [inner sep=0pt,minimum size=1mm,thick, draw = rednode, cross out]{}}},
	xib/.style={very thin,circle,fill=lbluenode,draw=symbols,inner sep=0pt,minimum size=1.6mm},
	xib1/.style={very thin,rectangle,fill=lbluenode,draw=symbols,inner sep=0pt,minimum size=1.6mm},
	xie/.style={very thin,circle,fill=greennode,draw=symbols,inner sep=0pt,minimum size=1.6mm},
	xid/.style={very thin,circle,fill=lbluenode,draw=symbols,inner sep=0pt,minimum size=1.6mm},
	xibx/.style={crosscircle,fill=lbluenode,draw=symbols,inner sep=0pt,minimum size=1.6mm},
	kernels2/.style={ultra thick,draw=symbols,segment length=12pt},
	not/.style={thin,regular polygon, regular polygon sides=3,draw=connection,fill=connection,inner sep=0pt,minimum size=1.2mm},
	notlblue/.style={thin,regular polygon, regular polygon sides=3,draw=lbluenode,fill=lbluenode,inner sep=0pt,minimum size=1.2mm},
	notorange/.style={thin,regular polygon, regular polygon sides=3,draw=orangenode,fill=orangenode,inner sep=0pt,minimum size=1.2mm},
	notgreen/.style={thin,regular polygon, regular polygon sides=3,draw=greennode,fill=greennode,inner sep=0pt,minimum size=1.2mm},
	>=stealth,
  }

\colorlet{darkblue}{blue!90!black}
\colorlet{darkred}{red!90!black}
\colorlet{darkgreen}{green!70!black}

\def\A{\mathsf{A}}

\def\s{\mathfrak{s}}

\newcommand{\e}{\varepsilon}

\def\K{\mathfrak{K}}

\def\${|\!|\!|}

\def\?{{\color{red}?}}

\def\id{\mathrm{id}}

\def\id{\mathrm{id}}

\def\bPsi{\boldsymbol{\Psi}}






\newcommand{\higgsvec}{\mathbf{V}}








\def\dash{\leavevmode\unskip\kern0.18em--\penalty\exhyphenpenalty\kern0.18em}
\def\slash{\leavevmode\unskip\kern0.15em/\penalty\exhyphenpenalty\kern0.15em}

\newcommand{\floor}[1]{\lfloor #1 \rfloor}

\usepackage{stmaryrd}
\def\fancynorm#1{{\talloblong #1 \talloblong}}
\def\quadnorm#1{{\llbracket #1 \rrbracket}}
\def\heatgr#1{{|\!|\!| #1 |\!|\!|}}

\def\state{\mathcal{S}}

\newtheorem{example}[lemma]{Example}

\let\basepoint\logof
\def\logof{\mathord{{\basepoint}}} 

\title{Uniqueness of gauge covariant renormalisation of stochastic 3D Yang--Mills--Higgs}
\author{Ilya Chevyrev$^{1}$, Hao Shen$^2$}

\institute{SISSA, \email{ichevyrev@gmail.com} \and University of Wisconsin-Madison, \email{pkushenhao@gmail.com}}

\begin{document}
\maketitle
\begin{abstract}
Local solutions to the 3D stochastic quantisation equations of Yang--Mills--Higgs were constructed in \cite{CCHS_3D}, and it was shown that, in the limit of smooth mollifications, there exists a mass renormalisation of the Yang--Mills field such that the solution is gauge covariant.
In this paper we prove uniqueness of the mass renormalisation that leads to gauge covariant solutions.
This strengthens the main result of \cite{CCHS_3D},
and is potentially important for the identification of the limit of other approximations, such as lattice dynamics.
Our proof relies on systematic short-time expansions of singular stochastic PDEs and of regularised Wilson loops. We also strengthen the recently introduced state spaces of \cite{Sourav_flow,Sourav_state,CCHS_3D} to allow finer control on line integrals appearing in expansions of Wilson loops.
\end{abstract}
\setcounter{tocdepth}{2}

\tableofcontents

\section{Introduction}

In this paper, we study the 3D stochastic quantisation equation (i.e. Langevin dynamic) of Yang--Mills--Higgs (SYMH) with DeTurck term.
To simplify the discussion, we suppose there is no Higgs field, in which case the equation reads
\begin{equ}[eq:SYM]
\partial_t A_i = 
\Delta A_i  + [A_j,2\partial_j A_i - \partial_i A_j + [A_j,A_i]] 
+ (C^{\eps}_{\A} A)_i +  \xi^\eps_i
\end{equ}
and is posed for $A = (A_1,A_2,A_3)\colon [0,\infty)\times\T^3 \to\mfg^3$, where $\mfg$ is the Lie algebra of a compact Lie group and $\T^3$ is the 3D torus,
$\xi^\e$ is a mollification at scale $\e>0$ of a $\mfg^3$-valued white noise on $\R\times\T^3$,
and $C^\e_\A \in L(\mfg^3,\mfg^3)$ is a `renormalisation operator'.
We refer to Section \ref{sec:setup} for precise definitions and for the generalisation that includes a Higgs field.

One of the main results of \cite{CCHS_3D} is that there exist operators $C^\e_\A \in L(\mfg^3,\mfg^3)$ such that the solution to \eqref{eq:SYM}
converges locally in time as $\eps\downarrow0$
and the limiting dynamic is gauge covariant
in the following sense: if $A,\bar A$ are two limiting dynamics started from gauge equivalent initial conditions $A(0) \sim \bar A(0)$,
then $A(t)$ and $\bar A(t)$ are gauge equivalent \emph{in law} for all $t\geq 0$ (modulo possible finite-time blow-up).
See Definition \ref{def:state} and Theorem \ref{theo:meta} for a summary of these results.
This allows, in particular, to construct a canonical Markov process on gauge orbits associated to the SYMH, which conjecturally has a unique invariant measure.
(Proving that this invariant measure exists would yield a construction of the 3D YMH measure on $\T^3$, which is currently an open problem.)

In this paper, we address the question of \emph{uniqueness} of the operators $\{C^\e_\A\}_{\e>0}$.
Our main result (or rather a corollary of it) is that, if $C^\e_\A, \bar C^\e_\A\in L(\mfg^3,\mfg^3)$ are renormalisation 
operators which both render the limiting dynamic of \eqref{eq:SYM} gauge covariant in the sense described
above,\footnote{in this case we say that they are `gauge covariant renormalisations' or `gauge covariant operators'}
then
\begin{equ}
\lim_{\eps\downarrow0} |C^\e_\A - \bar C^\e_\A| = 0\;.
\end{equ}
In fact, suppose $C^\e_\A$ are the gauge covariant operators from \cite{CCHS_3D} and that $\lim_{\eps\downarrow0} C^\e_\A - \bar C^\e_\A$ is non-zero.
Let $A^a$ be the limiting dynamic of \eqref{eq:SYM} with $\bar C^\e_\A$ and with initial condition $A(0)=a$.
Then, for all $t>0$ sufficiently small, one can find gauge equivalent $a\sim b$ and $s>0$ such that
\begin{equ}[eq:sep]
|\E W_\ell [\CF_s (A_t^a)] - \E W_\ell [\CF_s (A_t^b)]| \gtrsim t^{\frac{10}{9}}
\end{equ}
(the exponent $\frac{10}{9}$ can be replaced by any number larger than $1$).
Here, $W_\ell [\CF_s (\cdot)]$ is a \emph{regularised Wilson loop}, i.e. $W_\ell$ is a classical Wilson loop and $\CF_s(A)$ is a regularisation of $A$ given by the time-$s$ Yang--Mills (YM) heat flow started from $A$.
Since $W_\ell [\CF_s (\cdot)]$ is gauge invariant, the estimate \eqref{eq:sep} quantifies `non-gauge covariance' of SYMH renormalised with $\bar C^\e_\A$.
We refer to Theorem \ref{theo:main} for a detailed statement, which incorporates a Higgs field.
See also Section \ref{sec:idea} for a description of the main steps in the proof and the challenges that arise.

\textbf{Motivation and related results.}
Our motivation for this study is threefold.
First, our results contribute to the understanding of the symmetries of SYMH, in which we believe there is intrinsic interest.
They in particular help justify the fact that the Markov process constructed in \cite{CCHS_3D} is canonical.
For recent results on symmetries of other geometric SPDEs, see e.g. \cite{BGHZ22,Bruned_Dotsenko_24,Bellingeri_Bruned_24}. 

Second, as described in \eqref{eq:sep}, our main result separates expectations of regularised gauge invariant observables (Wilson loops).
The idea to regularise fields via the YM heat flow appeared in earlier works such as \cite{NN06,Luscher10,CG13}.
Recently, \cite{Sourav_flow,Sourav_state} and \cite{CCHS_3D},
proposed a state space for the 3D YM measure based on the YM heat flow
(see also \cite{Chevyrev22_norm_inf,Chevyrev_Mirsajjadi_24}).
Our analysis actually reveals an interplay between the small time $t>0$ of the SPDE and the `regularisation time' $s>0$ (we take $s$ as a small, but not too small, power of $t$)
and demonstrates a quantitative property that can be extracted from these observables as we send the regularisation scale $s\downarrow0$.

Finally, a similar result was shown in \cite{chevyrev2023invariant} in the simpler setting of $\T^2$ (see Theorem 8.1 therein and Remark \ref{rem:2D_compare} below),
and this result was used in the proof of universality of the continuum 2D YM Langevin dynamic studied first in \cite{CCHS_2D}.
Roughly speaking, \cite{chevyrev2023invariant} first showed tightness of many dynamical lattice models and that any limit point is a solution of \eqref{eq:SYM} with \textit{some} renormalisation $\mathring C_\A$.
To prove that there is only one limit point, the argument in
\cite{chevyrev2023invariant} is to remark that every limit is gauge covariant, from which uniqueness of the limit follows from uniqueness of the gauge covariant $\mathring C_\A$.
Deriving the scaling limit of the 3D lattice YMH Langevin dynamic is currently an open problem,
but we expect that the results of this paper may similarly help to establish uniqueness of limit points.

\begin{remark}
Compared to \cite[Sec.~8]{chevyrev2023invariant}, the proof of the present result is more involved because the solution of \eqref{eq:SYM} is much more singular in 3D than in 2D.
We thus require a more systematic small-time expansion of the SPDE and an analysis of the YM heat flow $\CF_s$ (which is not needed in 2D). We moreover require a more delicate choice of initial conditions $a,b$ in \eqref{eq:sep} than in \cite{chevyrev2023invariant}.
\end{remark}
In the rest of the introduction, we describe in detail our main result.

\subsection{Background and setup}
\label{sec:setup}

We say that $\moll\in\CC^\infty(\R\times\R^3)$ is a \emph{mollifier} if it has support in $\{z\,:\,|z|\leq 1/4\}$ where $|\cdot|$ is the parabolic distance on $\R\times\R^3$
and $\int\moll=1$.
For $\eps\in(0,1]$ and a mollifier $\moll$, write
\begin{equ}
\moll^{\eps}(t,x) =\eps^{-5} \moll(\eps^{-2}t,\eps^{-1}x)
\end{equ}
and $\xi^\eps = \moll^\eps * \xi$ for a distribution $\xi \in\CD'(\R\times \T^3)$.
We say that $\moll$ is \emph{non-anticipative }if it has support in $\{(t,x)\in\R\times\R^3\,:\, t>0\}$.

Let $G$ be a connected compact Lie group and $\mfg$ its Lie algebra.
We assume without loss of generality that $G\subset \U(N)$ and $\mfg\subset\mfu(N)$ for some $N\ge 1$.\label{page:G}\label{page:mfg}
We denote by $1\in G$ the identity element.
Let $(\higgsvec,\scal{\cdot,\cdot}_\higgsvec)$\label{page:higgsvec} be a real Hilbert space with an orthogonal left group action of $G$ that we write by $\higgsvec \ni v \mapsto g v \in \higgsvec$.
Note that we allow $\higgsvec=\{0\}$, which is called the \emph{pure} YM model.

We write the corresponding representation of the Lie algebra similarly as $v \mapsto Av$ for $A\in\mfg$.
We equip $\mfg$ with an $\Ad$-invariant inner product $\scal{\cdot,\cdot}_{\mfg}$.

Throughout the article, unless otherwise stated, we denote\label{page:E}
\begin{equ}
E=\mfg^3\oplus\higgsvec\;.
\end{equ}
For an $E$-valued distribution $X=(A,\Phi)$, we write $X^\ym=A$\label{page:ym}
for the $\mfg^3$-component.

As an example, one can take $G= \U(N)$, $\mfg = \mfu(N)$, and $\higgsvec = \C^N$, with the natural representation and with inner products $\scal{x,y}_{\higgsvec} = \Re\sum_{i=1}^N x_i\bar{y_i}$
and $\scal{A,B} = \Trace(AB^*)$,
where $\Trace$ is the trace.

For a vector space $F$, we let $L(F) = L(F,F)$\label{page:L} be the set of linear maps from $F$ to itself.
If $F$ carries a group action of $G$, we write $L_G(F)\subset L(F)$ for the space of linear operators that commute with the action of $G$.
We equip $\mfg$ and $\mfg^3$ with the adjoint actions, given for $g\in G$ by
\begin{equ}
\mfg\ni v\mapsto \Ad_g v \in\mfg
\quad 
\text{ and }
\quad \mfg^3\ni (v_1,v_2,v_3) \mapsto (\Ad_g v_1, \Ad_g v_2,\Ad_g v_3) \in \mfg^3\;.
\end{equ}
Recall the BPHZ constants from \cite[Remark~1.8, Proposition~5.7]{CCHS_3D}\label{page:BPHZ}
\begin{equ}
C_{\YM}^{\eps}\in L_G(\mfg)\;,
\qquad
C^\eps_{\Higgs}\in L_G(\higgsvec)
\end{equ}
which, in general, depend on $\moll$.
Unless otherwise stated, we extend an operator $c\in L(\mfg)$ to $c \in L(\mfg^3)$  
block diagonally. 

%
%

Consider furthermore $\mathring{C}_{\A}\in L(\mfg^3)$
and $\mathring C_{\Phi} \in L(\higgsvec)$\label{page:C_bare} and denote\footnote{We choose to use the clearer notation $\mathring C_{\Phi}$ here, which corresponds to  $-m^2$ in  \cite{CCHS_3D}.}\label{page:C_eps}
\begin{equ}[eq:C_eps]
C^{\eps}_{\A} = C_{\YM}^{\eps}+\mathring{C}_{\A} \in L(\mfg^3)
\qquad
\mbox{and}
\qquad
C^\eps_{\Phi} = C^\eps_{\Higgs} + \mathring C_{\Phi} \in L(\higgsvec)\;.
\end{equ}
For a $\mfg^3$-valued white noise $\xi_\ym = (\xi_1,\xi_2,\xi_3)$, $\higgsvec$-valued white noise $\xi_{\H}$ on $\R\times\T^3$,
consider the system of SPDEs on $[0,1] \times \T^3$
with $i \in \{1,2,3\}$ 
\begin{equs}
\partial_t A_i &= 
\Delta A_i  + [A_j,2\partial_j A_i - \partial_i A_j + [A_j,A_i]] 
\\
&\qquad\qquad -\mathbf{B}((\partial_{i} \Phi + A_{i}\Phi) \otimes \Phi)+ (C^{\eps}_{\A} A)_i +  \xi^\eps_i \;,
\\
\partial_t\Phi  &= 
\Delta \Phi 
+ 2 A_{j} \partial_{j}\Phi + A_{j}^{2}\Phi  - |\Phi|^2 \Phi + C^{\eps}_{\Phi} \Phi +  \xi_\H^{\eps} \;,\\
(A(0),& \Phi(0)) = (a,\phi) \in\CC^\infty\;,
		\label{eq:SPDE_for_A}
\end{equs}
where the summation over $j$ is  implicit,
and the map $\mathbf{B}\colon \higgsvec \otimes \higgsvec \rightarrow \mathfrak{g}$  is the unique  $\R$-linear form such that,
for all $u,v \in \higgsvec$ and $h \in \mfg$,
\begin{equ}
	\scal{\mathbf{B}(u\otimes v),h}_{\mfg} =  
	\scal{u,hv}_\higgsvec\;.
\end{equ}

As in \cite{CCHS_3D,chevyrev2023invariant}, we rewrite the above equation in the following shorthand notation
\begin{equ}\label{eq:SYM_moll}
\d_t X = \Delta X + X\d X + X^3 + C^\eps X + \moll^\eps *\xi
\end{equ}
for
\begin{equ}
X = (A,\Phi)\colon [0,T] \times \T^3\to E\;,
\end{equ}
where $T\in (0,1]$ is an existence time of the SPDE, $\xi = (\xi_1,\xi_2,\xi_3,\xi_\H)$ is a $\mfg^3\times\higgsvec$-valued white noise, and
\begin{equ}[eq:def-C-family]
C = \{C^\eps\}_{\eps\in(0,1)} = \{C^\eps_\A,C^\eps_\Phi\}_{\e\in(0,1)}\;.
\end{equ}
In the above equation \eqref{eq:SYM_moll}, to lighten notation, we do not write the dependence of $X$ on $\eps$.
In the equations that follow, we will make the dependence on $\eps$ explicit whenever it becomes important (e.g. in \eqref{eq:tilde_A_equ}-\eqref{eq:PDE_g} and in the proof of Theorem \ref{theo:main} below).

\begin{definition}[Path space and solution]\label{def:state}
Recall the state space $\state$ from \cite[Sec.~2.3]{CCHS_3D}.\footnote{We recall this space briefly in Remark~\ref{rem:state_embed}.}
Let $\skull$ denote a cemetery state, and, following~\cite[Sec.~1.5.1]{CCHS_2D}, for a metric space $F$, let $F^\sol$\label{page:sol} denote the space of paths $f\colon [0,1] \to F\sqcup\{\skull\}$ that can blow-up in finite time by leaving every bounded set and cannot be `reborn'.

Let $\SYMH(C, (a,\phi)) \in \state^\sol$\label{page:SYMH}
denote the limit of the solution to \eqref{eq:SYM_moll}
with $C$ as in \eqref{eq:def-C-family} and $(a,\phi)$ as in \eqref{eq:SPDE_for_A}; this limit exists in $\state^\sol$ due to
\cite[Thm. 1.7]{CCHS_3D},
and this limit depends only on $(\mathring C_\A,\mathring C_\Phi)$ and not on $\moll$.
\end{definition}

Now we recall the main result of \cite{CCHS_3D},
which states that there is a choice of $\mathring C_\A$ such that $\SYMH(C, (a,\phi))$
is gauge covariant.
For $\rho\in[0,\infty]$,
we write\label{page:mfG}
\begin{equ}
\mfG^\rho \eqdef \CC^\rho(\T^3,G)
\end{equ}
for the   gauge group (i.e. the group of gauge transformations)
of H\"older regularity $\rho$.
By  \cite[Thm.~1.2 (iv)]{CCHS_3D},
there exists $\rho \in (\frac12,1)$ and a continuous left group action $\mfG^{\rho}\times\state \ni (g,X)\mapsto g \act X \in\state$
such that, whenever $g$ 
and $X=(A,\Phi)$ are smooth, $g\act X = (g\act A,g\act \Phi)$ is given by 
\begin{equ}[e:gauge-transformation]
	g\act A \eqdef \Ad_g(A)  - (\mrd g)g^{-1} \;,
	\qquad
	\mbox{and}
	\qquad
	g\act \Phi  \eqdef  g\Phi\;.
\end{equ}
In the following theorem we fix this $\rho \in (\frac12,1)$.

\begin{theorem}(\cite[Theorems 1.9 and 6.1]{CCHS_3D})
\label{theo:meta}
Let  $\moll$ be a  non-anticipative mollifier.
There exists a unique $\mathring{C}_{\A} \in L_{G}(\mfg)$, independent   of mollifier $\moll$, with the following property.

Let $C = \{C^\e\}_{\e\in(0,1)}$ be as in \eqref{eq:def-C-family} and \eqref{eq:C_eps}.
Then for all $g(0) \in \mathfrak{G}^{\rho}$ and $(a,\phi) \in \state$, one has, modulo finite time blow-up,
 \begin{equ}[e:gauge-cov]
 g\act
 \SYMH(C, (a,\phi))
\eqlaw
\SYMH(C, g(0)\act(a,\phi))
\end{equ}
 where  $g$ is the solution to
\begin{equ}\label{eq:SPDE_for_g_wrt_A}
g^{-1}(\partial_t g)
= \partial_j(g^{-1}\partial_j g)+ [A_j,g^{-1}\partial_j g]
\end{equ}
with  initial condition $g(0)$, where $A$ is the $\mfg^3$-component of $ \SYMH(C, (a,\phi))$.
\end{theorem}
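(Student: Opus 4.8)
The plan is to work entirely at the level of the mollified equation \eqref{eq:SYM_moll}. I would show that if $X^\eps=(A^\eps,\Phi^\eps)$ solves \eqref{eq:SPDE_for_A} driven by $\xi^\eps$, and $g^\eps$ solves the parabolic gauge flow \eqref{eq:SPDE_for_g_wrt_A} with $A=A^\eps$ and datum $g(0)$, then $g^\eps\act X^\eps$ again solves an equation of the form \eqref{eq:SPDE_for_A}, now driven by a gauge-rotated noise and with a modified linear term, up to error terms that converge to a finite limit as $\eps\downarrow0$. Passing to the limit --- uniformly up to a stopping time at which the $\state$-norm of the solution leaves a large ball, which handles finite-time blow-up --- then yields \eqref{e:gauge-cov}, \emph{provided} the rotated driving noise can be coupled to one with the same law and the finite correction to the linear term is exactly cancelled by the choice of $\mathring C_\A$; this last requirement pins $\mathring C_\A$ down, which gives uniqueness.

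The first, purely algebraic, ingredient is the classical fact that the DeTurck-gauge-fixed YM(H) heat flow is gauge covariant in the \emph{smooth, unrenormalised} setting: if $X$ solves \eqref{eq:SPDE_for_A} with $C^\eps=0$ and smooth noise, and $g$ solves \eqref{eq:SPDE_for_g_wrt_A}, then $g\act X$ solves the same system driven by $(\Ad_g\xi_\ym, g\xi_\H)$. The precise form of the flow \eqref{eq:SPDE_for_g_wrt_A} for $g$ is chosen exactly so that the DeTurck term transforms covariantly; without it the gauge transform of a DeTurck--YM solution need not be one. With mollification one obtains the same identity for $g^\eps\act X^\eps$, with $\moll^\eps*(\Ad_g\xi_\ym)$ (resp. $\moll^\eps*(g\xi_\H)$) replaced by $\Ad_{g^\eps}\xi^\eps_\ym$ (resp. $g^\eps\xi^\eps_\H$) at the cost of commutators such as $[\moll^\eps*,\Ad_{g^\eps}]\xi_\ym$, and with the linear term $C^\eps X$ conjugated by $\Ad_{g^\eps}$ (conjugation being trivial in the limit since $C^\eps_\YM\in L_G(\mfg)$).

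Next comes the renormalisation analysis. Passing to the BPHZ-renormalised regularity-structure formulation of \eqref{eq:SYM_moll}, one must analyse the renormalisation of the equation satisfied by $g^\eps\act X^\eps$. Here the gauge symmetry of the regularity structure for \eqref{eq:SYM_moll}, developed in \cite{CCHS_2D,CCHS_3D}, enters: it forces all counterterms to be $G$-covariant, which in particular is why $C^\eps_\YM\in L_G(\mfg)$ and $C^\eps_\Higgs\in L_G(\higgsvec)$. One then checks that the commutator errors above, together with the rotation of the noise, produce in the limit $\eps\downarrow0$ a \emph{finite} additional drift of the form $(\text{$G$-covariant operator})\,A$, whose $\moll$-dependent part cancels exactly the $\moll$-dependent part of $C^\eps_\YM$, leaving a universal remainder. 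The operator $\mathring C_\A\in L_G(\mfg)$ is then the unique one for which the limiting equation for $g\act X$ coincides with the SYMH equation for initial datum $g(0)\act(a,\phi)$; any other choice leaves a nonzero drift, breaking covariance, and the cancellation just described gives $\moll$-independence (the Higgs renormalisation, required $G$-covariant, is otherwise unconstrained). For the noise rotation one uses that, for each $(t,x)$, $\Ad_{g^\eps(t,x)}$ is an orthogonal transformation of $\mfg$ (by $\Ad$-invariance of $\scal{\cdot,\cdot}_\mfg$) and $g^\eps(t,x)$ one of $\higgsvec$, and that $g^\eps$ is adapted: an It\^o-isometry argument --- an adapted family of isometries applied to a smoothed white noise yields, in the limit, a noise of the same law --- supplies a coupling under which the rotated driving noise agrees in law with $\xi$. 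Combining the three steps gives $g\act\SYMH(C,(a,\phi))\eqlaw\SYMH(C,g(0)\act(a,\phi))$.

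I expect the renormalisation analysis of the transformed equation to be the main obstacle. The gauge transformation $g$ is itself the solution of the singular SPDE \eqref{eq:SPDE_for_g_wrt_A} coupled to $A$, so $g\act X$ is a nonlinear, singular functional of the joint solution $(X,g)$: one must set up the joint regularity structure, prove local well-posedness and a maximal-solution coupling of $(X^\eps,g^\eps)$ uniformly in $\eps$, and verify that every subdivergence of the transformed equation reproduces $C^\eps$ up to a convergent correction recognisable as a $G$-covariant operator. In 3D this is substantially harder than in 2D: $g$ is only $\CC^\rho$ with $\rho\in(\frac12,1)$, its equation is far from smooth, and even formulating \eqref{e:gauge-cov} requires the YM-heat-flow-based state space $\state$ together with well-definedness of the $\mfG^\rho$-action on $\state$. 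Ensuring that the smooth DeTurck identity survives all these limits, with every error term controlled in the topology of $\state$, is the technical heart of the argument.
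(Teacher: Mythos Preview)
The paper does not prove this theorem: it is stated with the attribution ``(\cite[Theorems 1.9 and 6.1]{CCHS\_3D})'' and no proof is given here, as the result is imported from \cite{CCHS_3D} as background for the paper's actual contribution (Theorem~\ref{theo:main} and Proposition~\ref{prop:A_tilde_A}). There is therefore no proof in the paper to compare your proposal against.

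That said, your sketch is a reasonable high-level outline of the strategy actually carried out in \cite{CCHS_3D}: one does work with the mollified system, derive the equation satisfied by $g^\eps\act X^\eps$, identify the finite correction arising from the interplay of mollification, gauge rotation of the noise, and BPHZ renormalisation, and then fix $\mathring C_\A$ as the unique $G$-covariant operator absorbing that correction. One point worth flagging is that the uniqueness asserted in Theorem~\ref{theo:meta} is uniqueness \emph{relative to the specific $g$ solving \eqref{eq:SPDE_for_g_wrt_A}}; the present paper is precisely about strengthening this to an intrinsic uniqueness statement (see the discussion around \eqref{eq:check_C_def}). Your phrase ``any other choice leaves a nonzero drift, breaking covariance'' is correct for covariance \emph{via this particular $g$}, but does not by itself rule out covariance via some other mechanism --- that is the content of the rest of the paper.
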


The equation \eqref{eq:SPDE_for_g_wrt_A} for $g$ is classically well-posed in 2D, 
whereas in 3D 
it is classically ill-posed but can be given meaning using regularity structures via a limiting procedure, see \cite[Lem.~C.1]{CCHS_3D} (however, $g$ might blow up before $A$ does, and we refer to \cite[Thm.~6.1]{CCHS_3D} for the more precise statement of the above theorem).  
Throughout the paper, as in \cite[Thm.~6.1]{CCHS_3D}, we write 
\begin{equ}[eq:check_C_def]
\check C \in L_G(\mfg)
\end{equ}
to be the unique $\mathring{C}_{\A} \in L_{G}(\mfg)$ in Theorem~\ref{theo:meta}.

The main question we address in this paper is the following.
By Theorem~\ref{theo:meta}
we only know that 
$\check C$ is the unique operator 
such that \eqref{e:gauge-cov} holds with the particular 
$g$ that solves \eqref{eq:SPDE_for_g_wrt_A}.
It \emph{does not} rule out the possibility that there exists another
$\mathring{C}_{\A} \in L_G(\mfg)$ for which one still has \eqref{e:gauge-cov} for a different choice of $g$,
or, more generally, that
\begin{equ}{}
[\SYMH(C, (a,\phi))]
\eqlaw 
[\SYMH(C, g(0)\act(a,\phi))]
 \end{equ}
where $[X]$ denotes the gauge equivalence class of $X$.
In this paper we indeed rule out this possibility and thus prove an `intrinsic' notion of 
uniqueness.
Roughly speaking, in our main Theorem~\ref{theo:main}, we show that, if $\mathring C_\A \neq \check C$, then
one can use proper gauge invariant observables to 
`detect non-gauge-covariance'. 
The gauge invariant observables are Wilson loops regularised by the YM heat flow
as in \cite{CG13,Sourav_state,CCHS_3D}, 
which we recall now.

\begin{definition}[YM heat flow]\label{def:CF}
For a distribution $a \in \CD'(\T^3,\mfg^3)$
of suitable regularity,\footnote{See
Section \ref{sec:Fs} or \cite[Sec.~2.1]{CCHS_3D} for the precise definition of `suitable regularity'.
Elements of $\state$ from Definition \ref{def:state}, including $\SYMH_t(C, (a,\phi))$ for all $t \geq 0$, have this regularity.}
let $T_a>0$ denote the time of blow-up in $\CC^\infty$ of the maximal solution $A\colon (0,T_a)\to \CC^\infty(\T^3,\mfg^3)$ to the DeTurck--YM heat flow
\begin{equs}[eq:YMH_flow]
\partial_s A_i &= 
\Delta A_i  + [A_j,2\partial_j A_i - \partial_i A_j + [A_j,A_i]]\;,\\
A(0) &= a\;,
\end{equs}
and let $\CF_s(a) = A_s$\label{page:CF} denote the corresponding solution evaluated at time $s\in [0,T_a)$.
We also set $\CF_s(a)=\skull$ for $s\geq T_a$
and $\CF_s(\skull) = \skull$ for all $s\geq0$.
\end{definition}

For $x=(a,\phi)\in\CC^\infty(\T^3,E)$ and $g\in \mfG^\infty$, recall the notations $g\act x$ and $g\act a$ from \eqref{e:gauge-transformation}.
We say that $x,y\in \CC^\infty(\T^3,E)$ are gauge equivalent, and write $x\sim y$,
if $g\act x= y$ for some $g\in \mfG^\infty$.
We make similar definition for $a,b\in \CC^\infty(\T^3,\mfg^3)$.

Recall that $\sim$ is an equivalence relation on $\CC^\infty(\T^3,E)$, called gauge equivalence, which extends canonically to $\state$.\footnote{This extension is given in \cite[Def.~2.11]{CCHS_3D} and relies on the DeTurck--YMH heat flow. Moreover, $g\act X \sim X$ for all $g\in\mfG^\rho$ and $X\in\state$ as above \eqref{e:gauge-transformation}.}
Moreover, if $(a,\phi)\sim (\bar a,\bar\phi)$, then $a\sim \bar a$,
and the latter is equivalent to $\CF_s(a)\sim \CF_s(\bar a)$ for all $s\in (0,T_a\wedge T_{\bar a})$, see \cite[Prop.~2.15]{CCHS_3D}.  

\begin{definition}[Wilson loop]
\label{def:Wilson-loop}
For $\ell\in\CC^\infty([0,1],\T^3)$ and $A\in\CC^\infty(\T^3,\mfg^3)$, define the holonomy of $A$ along $\ell$ as
$\hol(A,\ell) = y_1$ where $y\colon [0,1]\to G$ is the solution to the ODE
\begin{equ}
\mrd y_t = y_t \mrd \ell_A\;,\quad y_0=1\;,
\end{equ}
and where $\ell_A \colon [0,1]\to\mfg$ is the line integral
\begin{equ}[eq:ell_A]
\ell_A(t) = \int_0^t \scal{A(\ell_s),\dot\ell_s} \mrd s\;.
\end{equ}
Whenever $\ell$ is a loop, i.e. $\ell(0)=\ell(1)$, we define the \textit{Wilson loop}\label{page:W_loop}
\begin{equ}
W_\ell(A) = \Trace \hol(A,\ell)\;.
\end{equ}
\end{definition}

Recall that, for smooth gauge equivalent $1$-forms $a\sim b$, one has $W_\ell(a)=W_\ell(b)$, i.e. $W_\ell$ is a gauge invariant function.
In particular, for $x = (a,\phi)\sim y=(b,\psi)$ in $\state$,
one has $W_\ell[\CF_s(a)] = W_\ell[\CF_s(b)]$.
(In fact, one can characterise $\sim$ via observables similar to $W_\ell [\CF_s (\cdot)]$, see \cite[Prop.~2.67]{CCHS_3D}.)

\subsection{Main result}
The following is our main result, which shows that if $\mathring C_\A$ deviates from $\check C$ by  $c\neq 0$,
then we lose gauge covariance.

\begin{theorem}\label{theo:main}
Recall $\check C \in L_G(\mfg)$ from \eqref{eq:check_C_def}.
Consider any $\mathring C_{\Phi}\in L_G(\higgsvec)$ and
a non-anticipative mollifier $\moll$.
Consider non-zero $c \in L_G(\mfg^3)$ and define
\begin{equ}
\mathring C_\A = \check C+ c\in L_G(\mfg^3)\;,\quad
C^\eps = (C^\eps_{\YM}+ \mathring C_\A, C^\eps_\Phi) \in L_G(\mfg^3)\oplus L_G(\higgsvec)\;.
\end{equ}
For $x=(a,\phi)\in \CC^\infty(\T^3,E)$, $g\in \mfG^\infty$, we write 
\begin{equ}[eq:A_bar_A_def]
\SYMH( C, x) = (A^{(1)},\Phi^{(1)})\;,\qquad \SYMH( C, g\act x) = (A^{(2)}, \Phi^{(2)})\;.
\end{equ}
Let $r>0$.
Then there exist $\sigma,t_0,\beta>0$, a loop $\ell\in\CC^\infty([0,1],\T^3)$ and $g\in \mfG^\infty$, such that, for all $t<t_0$, there exists
$x = x^{(t)}\in \CC^\infty(\T^3,E)$ such that $|x|_{\CC^3} < 1/\sigma$ and
\begin{equ}[eq:W_ell_F_s_A_diff]
|\E W_\ell [\mcF_s (A^{(1)}_t)] - \E  W_\ell [\mcF_s ( A^{(2)}_t)]| \geq \sigma t^{1+r}\;,
\end{equ}
where $s=t^\beta$ and, in case of finite-time blow-up, we define $W_\ell(\skull)=0$ by convention.
\end{theorem}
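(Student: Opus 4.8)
The plan is to show that a non-zero $c$ produces a detectable deviation in the small-time expansion of the regularised Wilson loop, by comparing the two dynamics $A^{(1)}$ and $A^{(2)}$ through a short-time expansion and isolating the leading term where $c$ enters. The starting observation is that the map $\mathring C_\A \mapsto \SYMH(C,\cdot)$ depends on $c = \mathring C_\A - \check C$ in a way that, in the small-time regime, should be well-approximated by perturbation of the deterministic part of \eqref{eq:SYM_moll}: since $\check C$ already renders the dynamic gauge covariant, at leading order the difference between $A^{(2)}$ and $g(t)\act A^{(1)}$ (where $g$ solves the transport equation \eqref{eq:SPDE_for_g_wrt_A}) is governed by the extra linear term $c\,A$. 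More precisely, I would first establish a systematic short-time expansion of $\SYMH(C,x)$ in powers of $t$, with explicit control on remainders in the state space $\state$ (or rather its strengthened variant introduced in this paper), writing $A_t = \CF_t^{\mathrm{det}}(a) + (\text{stochastic/noise terms}) + t\cdot(\text{correction involving } c\,a) + o(t)$, where the $c$-dependent correction appears at order $t$. The gauge covariance of $\SYMH(\check C,\cdot)$ — Theorem~\ref{theo:meta} — guarantees that all the $c$-independent pieces match (in law, after applying $g$) between the two sides of \eqref{eq:W_ell_F_s_A_diff}, so the leading surviving discrepancy is the $c$-term.

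Next I would feed this expansion through the YM heat flow $\CF_s$ and the Wilson loop $W_\ell$. The key point is that $\CF_s$ with $s$ chosen as a small power of $t$ (as flagged in the introduction, $s\downarrow 0$ polynomially in $t$) regularises the rough stochastic contributions enough that they contribute to $\E W_\ell$ at order smaller than $t^{1+r}$, while the $c$-correction, being comparatively smooth, survives the flow. Concretely: expand $W_\ell[\CF_s(A_t)]$ around the heat-flow regularisation of the initial data, $W_\ell[\CF_s(a)]$ (or around its further YM-heat-flow-evolved version $W_\ell[\CF_{s+t}^{\mathrm{det}}(a)]$), using that $W_\ell$ is smooth and that $D W_\ell$ applied to the perturbation is essentially the line integral $\int \scal{(\text{perturbation}),\dot\ell}$ against $\ell$. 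Taking expectations kills the odd-in-noise terms; the $c$-term contributes $t\int_0^1 \scal{\CF_s(\text{lin})(c\,a\text{-piece})(\ell_u),\dot\ell_u}\,\mrd u$ plus, after exponentiating into the holonomy, the nonabelian corrections. One then needs that this integral is non-zero for a suitable choice of $\ell$, $a$, and gauge transformation $g$ — this is where the "delicate choice of initial conditions" remarked upon enters: since $c\in L_G(\mfg^3)$ is a fixed non-zero $G$-equivariant operator, one must pick $a$ (and $\ell$) so that the pairing $\int\scal{(\text{lin. heat flow applied to } c\,a), \dot\ell}$ does not vanish, and so that it is \emph{not} cancelled by the difference coming from comparing $A^{(1)}$ started at $a$ versus $A^{(2)}$ started at $g\act a$; choosing $b = g\act a$ with $g$ near the identity, one linearises and reduces the obstruction to a genuinely non-trivial but finite-dimensional-looking non-vanishing statement about $c$ acting on a well-chosen constant (or low-frequency) mode.

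The main obstacle I expect is the \emph{uniformity and sharpness of the remainder estimates}: one must show that every contribution to $\E W_\ell[\CF_s(A^{(1)}_t)] - \E W_\ell[\CF_s(A^{(2)}_t)]$ other than the $c$-term is $o(t)$ — in fact $o(t^{1+r})$ after the right choice of the exponent relating $s$ to $t$ — and this requires careful multi-scale bookkeeping: the stochastic terms in the expansion of $A_t$ are singular (this is 3D), their Wilson-loop line integrals need the \emph{strengthened} state space of this paper to even make sense with quantitative bounds, and the YM heat flow $\CF_s$ must be shown to improve their regularity at a rate that beats $t^{1+r}$ while $s$ is still small enough that $\CF_s$ does not itself smear out the $c$-correction. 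Balancing these two competing requirements — $s$ large enough to tame the noise, small enough to preserve the signal — is the crux, and is precisely why the explicit sub-power choice of $s$ in terms of $t$ (with the $t^{1+r}$, e.g.\ $t^{10/9}$, lower bound) appears. A secondary technical point is handling the finite-time blow-up convention $W_\ell(\skull)=0$: one must verify that the probability of blow-up before time $t$ (for the chosen small initial data) contributes negligibly, which follows from standard a priori control on the SPDE for small data and small time, but must be tracked to keep \eqref{eq:W_ell_F_s_A_diff} honest.
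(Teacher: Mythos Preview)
Your outline has the right macroscopic shape (short-time expansion, feed through $\CF_s$ and $W_\ell$, balance $s$ against $t$, pick initial data carefully), but there is a genuine gap in how you identify the $c$-discrepancy, and this gap propagates into your choice of initial conditions.

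You write that the difference between $A^{(2)}$ and $g(t)\act A^{(1)}$ is ``governed by the extra linear term $cA$''. This is not quite right. When you gauge-transform $X^{(1)}$ by $g$ to get $Y = g\act X^{(1)}$ with the same initial condition $\tilde x = g(0)\act x$ as $X^{(2)}$, the equation for $Y$ picks up \emph{two} discrepancies relative to the equation for $X^{(2)}$: an extra drift $(C^\eps \mrd g\,g^{-1},0)$, and the rotated noise $\Ad_g(\moll^\eps*\xi)$ in place of $\moll^\eps*\xi$. These are coupled, and neither is small. The paper's key move (see the proof below Proposition~\ref{prop:A_tilde_A}) is to invoke the precise characterisation of $\check C$ from \cite[Thm.~6.1]{CCHS_3D}: $\check C$ is exactly the operator for which the limit of $Y$ (with $\Ad_g(\moll^\eps*\xi)$ and drift $(C^\eps_{\YM}+\check C+c)\mrd g\,g^{-1}$) agrees with the limit of $\bar X$ (with $\moll^\eps*(\Ad_{\bar g}\xi)$ and drift only $c\,\mrd\bar g\,\bar g^{-1}$). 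Then $\bar X\eqlaw \tilde X$ where $\tilde X$ solves \eqref{eq:tilde_A_equ} with plain noise and source $c\,\mrd\tilde g\,\tilde g^{-1}$. So the $c$-signal is $c\,h$ with $h=\mrd\tilde g\,\tilde g^{-1}$, \emph{not} $cA$; at leading order this is $c\,t\,P_t h(0)$ with $h(0)=\mrd g(0)\,g(0)^{-1}$. Your informal appeal to Theorem~\ref{theo:meta} (``the $c$-independent pieces match in law'') is not enough here: that theorem concerns the dynamic with renormalisation $\check C$, whereas both $A^{(1)}$ and $A^{(2)}$ carry $\check C+c$, and disentangling the two requires the stronger input from \cite[Thm.~6.1]{CCHS_3D}.

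This misidentification then makes your initial-data discussion too vague. Once the signal is $c\,h(0)$, the Wilson-loop expansion (Lemma~\ref{lem:Wilson_expansion}) gives a leading term $t\,\Trace\int_\ell c\,h(0)$ plus a cross-term $t\,\Trace\int\mrd\ell_{A(0)}\,\mrd\ell_{c\,h(0)}$. If the trace term vanishes (which it can, since $\Trace$ kills much of $\mfg$), one needs the cross-term, and for that the paper takes the specific coupling $A(0)=t^r c\,h(0)$, so that the cross-term becomes $t^{1+r}\Trace\bigl(\{c_1^{(1)}\zeta(1)\}^2\bigr)$, which is strictly positive because $c_1^{(1)}\zeta(1)\neq 0$. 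The construction of $h(0)$ with $c_1\int_\ell h(0)\neq 0$ while $g(0)$ remains a genuine $G$-valued loop uses Chow--Rashevskii (Lemma~\ref{lem:curve_selection}). None of this is visible if one thinks the signal is $c\,a$.
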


\begin{remark}\label{rem:projection}
We obtain a fixed power of $t$ (i.e. the exponent $1+r$) 
in the lower bound \eqref{eq:W_ell_F_s_A_diff} and this is important because it ensures that the difference in expectation of Wilson loops is not due to different blow-up times of $\SYMH( C, x)$ and $\SYMH( C, g\act x)$.
Indeed, for any $M>0$, $\P[\SYMH_t(C,x)=\skull] \lesssim t^M$ locally uniformly in $x\in\CC^3$.
Consequently, for $c=0$, the left-hand side of \eqref{eq:W_ell_F_s_A_diff} is bounded from above by $\lesssim t^M$ for any $M>0$.

As a corollary, it follows that there exists only one renormalised solution of SYMH in 3D that produces a Markov process on $\state/{\sim}$
from generative probability measures as in \cite[Sec.~7]{CCHS_3D}.
See \cite[Cor.~8.3]{chevyrev2023invariant} for a precise version of such a result in 2D, which, for brevity, we do not reproduce here in 3D.
\end{remark}

\begin{remark}\label{rem:2D_compare}
One should compare Theorem \ref{theo:main} with \cite[Thm.~8.1]{chevyrev2023invariant}, which
is an analogous result for $\T^2$ (and without Higgs) and with a weaker lower bound $\sigma t^2$.
However, $x$ in \cite[Thm.~8.1]{chevyrev2023invariant} is taken simply as $x=0$ and does not depend on $t$.
In contrast, our $x$ in Theorem \ref{theo:main} does depend on $t$ and will be taken of size $|x|_{\CC^3} \asymp t^{r}$ for $r>0$ small.
This choice is crucial to dominate more badly behaved remainder terms in 3D; see the beginning of Section \ref{sec:initial} for further discussion.
\end{remark}

Theorem \ref{theo:main} in fact follows from the following more general result.
For an operator $c\in L(\mfg^3)$, we write $c = (c_1,c_2,c_3)$ where $c_i\in L(\mfg^3,\mfg)$.

\begin{proposition}\label{prop:A_tilde_A}
Consider the loop $\ell\in\CC^\infty([0,1],\T^3)$, $\ell(x)=(x,0,0)$,
and let $c, \mathring C_{\A}\in L(\mfg^3)$ with $c_1\neq 0$,
and $C^\eps=(C_{\YM}^{\eps}+\mathring{C}_{\A},C^\eps_\Phi)$.

Let $r>0$.
Then there exist $t_0,\sigma>0$ depending only on $\moll,c,\mathring C_{\A},G,r$, and $g(0)\in \mfG^\infty$ depending only on $c,G$, and $\beta>0$ depending only on $r$, such that
the following holds: for all $t\in (0,t_0)$ there exists $\tilde x = \tilde x^{(t)} \in \CC^\infty(\T^3,E)$ with $|\tilde x|_{\CC^3} < 1$
such that
\begin{equ}[e:1+r]
|\E W_\ell[\mcF_s ( A_t)] - \E W_\ell [\mcF_s (\tilde A_t)]| \geq \sigma t^{1+r}
\end{equ}
where $s=t^\beta$, $X=(A,\Phi) = \SYMH(C,\tilde x)$, and $\tilde X = (\tilde A,\tilde\Phi)$ is the $\eps\downarrow0$ limit of solutions to
\begin{equ}[eq:tilde_A_equ]
\partial_t \tilde X^\eps = \Delta\tilde X^\eps + \tilde X\partial \tilde X^\eps + (\tilde X^\eps)^3 +\moll^\eps* \xi + C^\eps \tilde X^\eps + (c\mrd \tilde g^\eps  (\tilde g^\eps)^{-1},0)\;,\quad \tilde X^\eps(0)=\tilde x\;,
\end{equ}
where, writing $\tilde X^\eps = (\tilde A^\eps,\tilde\Phi^\eps)$,
$\tilde g^\eps$ solves the PDE
\begin{equ}\label{eq:PDE_g}
\partial_t \tilde g^\eps = \Delta \tilde g^\eps - (\partial_j \tilde g^\eps) (\tilde g^\eps)^{-1} (\partial_j \tilde g^\eps) + [\tilde A^\eps_j, (\partial_j \tilde g^\eps) (\tilde g^\eps)^{-1}] \tilde g^\eps
\end{equ}
with initial condition $g(0)$.
We treat the limit $(\tilde X, \tilde g) = \lim_{\eps\downarrow 0} (\tilde X^\eps,\tilde g^\eps)$
as a random variable in $(\state\times \mfG^{\rho})^\sol$ for $\rho>\frac12$
(in particular $\tilde X_t=\skull$ if either $\tilde X$ or $\tilde g$ blow up before time $t$).
\end{proposition}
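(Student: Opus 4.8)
The plan is to reduce the whole statement to a short-time expansion of the quantity $\E W_\ell[\mcF_s(A_t)] - \E W_\ell[\mcF_s(\tilde A_t)]$ to leading order in $t$, and to show that for a well-chosen initial datum $\tilde x$ and gauge $g(0)$ the leading contribution is nonzero (and of order at least $t^{1+r}$ after the regularisation time $s$ is tuned as a power of $t$). First I would construct the coupling: the two dynamics $X=\SYMH(C,\tilde x)$ and $\tilde X$ are driven by the \emph{same} white noise $\xi$, so the difference $D = X-\tilde X$ solves, at the mollified level, a linear-type equation whose forcing is the extra drift term $(c\,\mrd\tilde g\,\tilde g^{-1},0)$ plus terms quadratic and cubic in $X,\tilde X$ that vanish when $D$ does. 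Since $\tilde g(0)=g(0)$ is smooth and $\tilde g$ solves the parabolic gauge-flow PDE \eqref{eq:PDE_g}, for short times $\tilde g$ stays close to $g(0)$ in $\CC^\alpha$ for $\alpha<\rho$, so $c\,\mrd\tilde g\,\tilde g^{-1}$ is, to leading order, the \emph{fixed smooth function} $c\,\mrd g(0)\,g(0)^{-1}$ (recall only $c_1\neq 0$ matters for the loop $\ell(x)=(x,0,0)$, which sees only the first component $A_1$). The point is that this forcing is smooth and order-one in size, so it produces a contribution to $D_t$ that, along the loop $\ell$, behaves like $t$ times an explicit smooth vector field, whereas all the genuinely singular/stochastic pieces either cancel between the two equations or are higher order.

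The second block is the interplay with the YM heat flow $\mcF_s$ and the Wilson loop. Here I would use the short-time expansions advertised in the abstract and in Section~\ref{sec:idea}: expand $\mcF_s(A_t)$ around $\mcF_s(a)$ (with $a=\tilde x^\ym$), linearising the DeTurck--YM heat flow, and then expand $W_\ell$ to first order, using that $\frac{\mrd}{\mrd\eps}\big|_0 W_\ell(A+\eps B) = \int_0^1 \scal{(\Ad_{y_s}^{-1}B)(\ell_s),\dot\ell_s}\,\mrd s$ up to the holonomy conjugation. Composing, $\E W_\ell[\mcF_s(A_t)] - \E W_\ell[\mcF_s(\tilde A_t)]$ is, to leading order, a linear functional of $\E[D_t]$ transported by the linearised heat flow and integrated along $\ell$; since the stochastic fluctuations are mean-zero at the relevant order and $\E[D_t] \approx t\cdot (\text{linearised heat-flow semigroup at time }s)\,[c_1\,\mrd g(0)\,g(0)^{-1}]$, the whole thing reduces to showing that the bilinear pairing of this transported smooth function against $\dot\ell$ can be made nonzero. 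The regularisation time $s$ is then chosen as $s\asymp t^{2r'}$ for suitable small $r'$: large enough that the singular remainders (which carry negative powers of $s$) are controlled, small enough that the heat semigroup at time $s$ has not yet destroyed the pairing, yielding the clean lower bound $\sigma t^{1+r}$. The size constraint $|\tilde x|_{\CC^3}\asymp t^r$ from Remark~\ref{rem:2D_compare} enters precisely here: one needs $\tilde x$ small so that the nonlinear self-interaction terms in the expansion of $D$ (which are \emph{not} forced by $c$) are subleading, but not so small that one cannot arrange the heat-flow-transported pairing to survive — I would choose $\tilde x$ to make $\mcF_s(a)$ and the linearisation point generic relative to $\ell$.

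The final, and I expect hardest, step is the rigorous control of all error terms in these two nested expansions, i.e. turning the heuristic ``$\E[D_t]\approx t\cdot(\dots)$ and everything else is $o(t^{1+r})$'' into genuine estimates uniform in $\eps$ (so that they survive the $\eps\downarrow 0$ limit in $\state^\sol$) and jointly in the two small parameters $t$ and $s$. This requires: (i) Schauder-type estimates for the SPDE difference $D$ in the relevant (negative-regularity, $s$-scaled) norms, using the strengthened state spaces of Section~\ref{sec:Fs} that give finer control on the line integrals $\ell_A$ appearing in Wilson loops; (ii) a priori stochastic bounds on $X,\tilde X$, and on the modelled-distribution remainders, with the polynomial-in-$t$ blow-up probability estimate of Remark~\ref{rem:projection} ensuring the cemetery-state convention $W_\ell(\skull)=0$ contributes only $O(t^M)$; and (iii) stability of the gauge flow $\tilde g$ and of $c\,\mrd\tilde g\,\tilde g^{-1}$ in an appropriate topology. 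The delicate point, absent in the 2D argument of \cite[Sec.~8]{chevyrev2023invariant}, is that in 3D $A_t$ itself is a distribution of negative regularity, so ``evaluating along $\ell$'' only makes sense after the heat-flow regularisation, and one must track how the $s^{-\kappa}$ losses from the singular part of $\mcF_s$ compete against the gain $t^{1+r}$ — this bookkeeping, carried out via the systematic short-time expansion of the SPDE together with the analysis of $\mcF_s$, is the technical heart of the proof. Once Proposition~\ref{prop:A_tilde_A} is established, Theorem~\ref{theo:main} follows by the reduction already indicated in the text: one takes $c_1\neq 0$ (possible since $c\neq 0$ and $c\in L_G(\mfg^3)$, after possibly a rotation of coordinates and using that $\ell$ may be any smooth loop), identifies $g\act x$ with the appropriate $\tilde x$ via the relation between \eqref{eq:SPDE_for_g_wrt_A}/\eqref{eq:PDE_g} and gauge transformations, and uses gauge invariance of $W_\ell$ together with Theorem~\ref{theo:meta} applied at $\mathring C_\A=\check C$.
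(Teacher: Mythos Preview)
Your overall architecture is right: couple $X$ and $\tilde X$ to the same noise, expand the difference to leading order as $t\,c\,h(0)$ with $h(0)=\mrd g(0)\,g(0)^{-1}$, feed this through $\mcF_s$ and $W_\ell$, and tune $s$ as a small power of $t$. But there is a genuine gap at the step you call ``the whole thing reduces to showing that the bilinear pairing of this transported smooth function against $\dot\ell$ can be made nonzero''. Linearising $W_\ell$ around a base point close to $0$ gives, to leading order, $t\,\Trace\!\int_\ell c\,h(0)$; since $\mfg\subset\mfu(N)$, this trace vanishes identically whenever $G$ is semisimple (e.g.\ $G=\SU(N)$), and more generally it can vanish for the particular $c_1$ at hand. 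So the linear variation of the Wilson loop is \emph{not} enough to conclude, and this is exactly where the exponent $1+r$ (rather than $1$) in \eqref{e:1+r} comes from.

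The paper resolves this by carrying the Wilson-loop expansion to \emph{second order} in the iterated-integral (Chen) series of the holonomy: the relevant term is the bilinear cross-term $t\,\Trace\!\int_{[0,1]^2}\mrd\ell_{A(0)}\,\mrd\ell_{c\,h(0)}$. The specific choice of initial condition is then $A(0)=t^{r}\,c\,h(0)$ (this is the real reason for $|\tilde x|_{\CC^3}\asymp t^r$, not merely suppressing nonlinearities), which turns the cross-term into $t^{1+r}\,\Trace\!\big(v^{2}\big)$ with $v=\int_\ell c_1 h(0)\in\mfg$; since $\mfg\subset\mfu(N)$ one has $\Trace(v^2)<0$ whenever $v\neq 0$, so it suffices to arrange $v\neq 0$. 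Here a second ingredient is missing from your sketch: $g(0)$ must be \emph{periodic} on $\T^3$, which forces the ordered exponential of $h(0)$ along $\ell$ to be the identity, yet one still needs the \emph{unordered} integral $\int_\ell c_1 h(0)$ to be nonzero. Producing such a $g(0)$ is nontrivial and uses the Chow--Rashevskii theorem (Lemma~\ref{lem:curve_selection}). Your vague ``choose $\tilde x$ to make the linearisation point generic'' does not address either of these two points.
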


\begin{remark}
Similarly to the comment after Theorem \ref{theo:meta},
the limit $(\tilde X, \tilde g) = \lim_{\eps\downarrow 0} (\tilde X^\eps,\tilde g^\eps)$ exists due to \cite[Lem.~C.1]{CCHS_3D}.
\end{remark}

\begin{remark}
For readers not familiar with  \cite{CCHS_3D}, let us give some motivation for  \eqref{eq:PDE_g}
appearing in the above proposition. 
Assuming that $(A,g)$ solves \eqref{eq:SPDE_for_g_wrt_A},
and letting $B \eqdef g\act A = \Ad_g(A)  - (\mrd g)g^{-1} $ as in \eqref{e:gauge-transformation},
then one can easily rewrite \eqref{eq:SPDE_for_g_wrt_A} in terms of the `target' (i.e. gauge transformed) process $B$
as (see also  \cite[(1.16)-(1.17)]{CCHS_3D})  
\[
(\partial_t g) g^{-1} = \partial_j ((\partial_j g) g^{-1}) + [B_j, (\partial_j g) g^{-1}]
\]
which is equivalent with   \eqref{eq:PDE_g} if $B$ is replaced by $\tilde A$.
\end{remark}

\begin{remark}
As the proof will reveal, the exact form of \eqref{eq:PDE_g} is hardly important and the same result holds for much more general $\tilde g$.
See Remark \ref{rem:h_general} for the precise conditions on $\tilde g$ that we use.
\end{remark}

We next prove Theorem \ref{theo:main} by applying Proposition~\ref{prop:A_tilde_A} with $\mathring C_{\A} = \check C + c$.
Remark, however, that in Proposition~\ref{prop:A_tilde_A},
$c$ and $\mathring C_{\A}$ are not required to be related (i.e. their difference is not required to be $\check{C}$)
and we do not require $c,\mathring C_{\A}\in L_G(\mfg^3)$.

\begin{proof}[of Theorem \ref{theo:main}]
Since $c\neq0$, we can assume without loss of generality that $c_1\neq 0$.
Let $\tilde x$ and $g(0)$ be as in Proposition \ref{prop:A_tilde_A} for some $t\in(0,t_0)$.
Consider $x$ such that $g(0)\act x = \tilde x$.

The proof below will leverage the argument leading to the proof of  \cite[Thm.~6.1]{CCHS_3D}, i.e. Theorem~\ref{theo:meta}.

Let $X^{(1),\eps} = (A^{(1),\eps},\Phi^{(1),\eps})$ be the solution to \eqref{eq:SYM_moll} with initial condition $X^{(1),\eps}(0)=x$,
so that
\[
X^{(1)} \eqdef (A^{(1)},\Phi^{(1)}) = \SYMH(C,x) = \lim_{\eps\downarrow0}X^{(1),\eps}
\;.
\]
Since $C^\eps$ commutes with the action of $G$,
recall from \cite[(1.16)-(1.17)]{CCHS_3D}
 that for each $\eps\in (0,1)$,
$X^{(1),\eps}$
 is pathwise gauge equivalent to $Y^\eps$ which solves
\begin{equ}
\d_t Y^\eps = \Delta Y ^\eps+ Y^\eps\d Y^\eps + (Y^\eps)^3 + C^\eps Y^\eps +  (C^\eps\mrd g^\eps (g^\eps)^{-1},0)+ \Ad_{g^\eps} (\moll^\eps *\xi)\;,
\end{equ}
with $Y^\eps(0) = g(0)\act X^{(1),\eps}(0) = \tilde x$.
More precisely, $g^\eps\act X^{(1),\eps} = Y^\eps$ up until the blow up of $(X^{(1),\eps},Y^\eps,g^\eps)$, 
where $g^\eps$ solves~\eqref{eq:PDE_g} with initial condition $g^\eps(0) = g(0)$ but with $\tilde A^\eps$ replaced by $(Y^\eps)^\ym$, the $\mfg^3$-component of $Y^\eps$.

Consider now the following equation for $\bar X^\eps=(\bar A^\eps,\bar \Phi^\eps)$
\begin{equs}{}
&\partial_t \bar X^\eps =\Delta \bar X^\eps + \bar X^\eps\partial \bar X^\eps + (\bar X^\eps)^3  + C^\eps \bar X^\eps + (c \mrd \bar g^\eps (\bar g^\eps)^{-1},0)+\moll^\eps* (\Ad_{\bar g^\eps} \xi)
\;,
\\
&\bar X^\eps(0)=\tilde x\;,
\end{equs}
where $\bar g^\eps$ solves~\eqref{eq:PDE_g}, but with $\tilde A^\eps$ replaced by $\bar A^\eps$,
and with initial condition $\bar g^\eps(0) = g(0)$.

Recall that $C^\eps = (C^\eps_{\A}, C^\eps_\Phi)$
where $C^\eps_\A = C^\eps_{\YM} + \mathring C_\A$ and $\mathring C_\A = \check C + c$. 
By \cite[Thm.~6.1(ii)-(iii)]{CCHS_3D},
$\check{C}\in L_{G}(\mfg)$
is the unique operator
such that, for all $\eps$-independent initial conditions $(Y(0), g(0)) = (\bar{X}(0), \bar{g}(0))$ for $(Y^\eps,g^\eps)$ and $(\bar X^\eps,\bar g^\eps)$,
and operators $\mathring C_\A\in L(\mfg^3)$,\footnote{The statement
of \cite[Thm.~6.1]{CCHS_3D} is restricted to $\mathring C_\A\in L_G(\mfg)$,
but it is simple to see that the same proof applies to any operator $\mathring C_\A\in L(\mfg^3)$,
see also \cite[Thm.~1.14]{Chevyrev22YM}.}
$(Y^\eps,g^\eps)$ and $(\bar X^\eps,\bar g^\eps)$
converge in probability to the same limit as $\eps\downarrow 0$.

Finally, for each $\eps\in (0,1)$, clearly $(\bar X^\eps,\bar g^\eps)$ 
is equal in law to $(\tilde X^\eps,\tilde g^\eps)$ which solve \eqref{eq:tilde_A_equ}-\eqref{eq:PDE_g}
with initial condition $\tilde X^\eps(0)=\tilde x$ and $\tilde g^\eps(0)=g(0)$.
Taking limits $\eps\downarrow0$ and using that gauge equivalence of $X^{(1),\eps}$ and $Y^\eps$ is preserved in this limit by \cite[Prop.~2.28]{CCHS_3D}, it follows that,
for $s=t^\beta$ and any loop $\ell$ and $M>0$,
\begin{equ}[eq:A_tilde_A_close]
\E W_\ell [\mcF_s (A^{(1)}_t)] = \E  W_\ell [\mcF_s (\tilde A_t)] + O(t^{M})
\end{equ}
where $\beta>0$ is from Proposition \ref{prop:A_tilde_A} and we write $\tilde X = (\tilde A,\tilde\Phi) = \lim_{\eps\downarrow0}\tilde X^\eps$ and where $O(t^M)$ accounts for the possibility of different blow-up times of $A^{(1)}$ and $\tilde A$ and of $\mcF (A^{(1)}_t)$ and $\mcF (\tilde A_t)$
(we use here smallness of $s=t^\beta$ and the bound $|\tilde x|_{\CC^3} < 1$ uniform in $t\in(0,t_0)$ to conclude that the probability that $\CF (A^{(1)}_t)$ and $\CF(\tilde A_t)$ blow up before time $s$ is bounded below by $1-O(t^M)$).

However, by Proposition \ref{prop:A_tilde_A}, we can find $\ell$ such that, denoting $(A^{(2)}, \Phi^{(2)}) = \SYMH(C,\tilde x)$ as in \eqref{eq:A_bar_A_def} and $s=t^\beta$,
\begin{equ}[eq:tilde_A_bar_A_far]
|\E  W_\ell [\mcF_s (\tilde A_t)] - \E W_\ell [\mcF_s ( A^{(2)}_t)] | \geq \sigma t^{1+r}\;.
\end{equ}
The conclusion \eqref{eq:W_ell_F_s_A_diff} follows from combining \eqref{eq:A_tilde_A_close} and \eqref{eq:tilde_A_bar_A_far}. 
\end{proof}

We conclude this subsection with some open problems.
\begin{enumerate}
\item
Theorem~\ref{theo:main} states that the gauge covariant renormalisation 
$\mathring C_\A$ is unique, i.e. $\mathring C_\A = \check{C}$, for each fixed 
$\mathring C_{\Phi}$. (Remark that by \cite[Remark~5.6]{CCHS_3D},
 $\check{C}$ does not depend on $\mathring C_{\Phi}$.)
If we allow $\mathring C_{\Phi}$ to vary, one has a family of gauge covariant solutions parametrised by $\mathring C_{\Phi}$.
As in Remark~\ref{rem:projection},
each of these gauge covariant solutions produces a Markov process on $\state/{\sim}$ by projection.
It would be natural to expect a `separation of Higgs mass' result, namely, the projected Markov processes on $\state/{\sim}$ for different choices of $\mathring C_{\Phi}$ are distinct. 
We expect that 
a certain type of gauge invariant observable (other than Wilson loop) consisting of the Higgs field will be useful to separate these projected Markov processes.
\item
A more general question is to `classify' all the gauge covariant dynamics. 
For instance, consider the pure YM dynamic, i.e. $\higgsvec = \{0\}$.
Theorem~\ref{theo:main} states that any finite shift of the term $\check{C} A$  yields a non-gauge covariant dynamic. 
Can one find  other gauge covariant dynamics by  finite perturbations of the SPDE, or prove non-gauge covariance for such perturbations? 
We expect that such gauge covariant perturbations are rare but exist, 
e.g. perturbing the stochastic YM equation by the gradient of the Chern--Simons functional in 3D may still be gauge covariant.
\item
As already mentioned above,
deriving the scaling limit of the 3D lattice YM or YMH Langevin dynamic is currently an open problem.
In 2D, the pure YM case was solved in \cite{chevyrev2023invariant},
which also established universality  of a large class of lattice YM models (including in particular the models with Wilson, Villain, Manton actions),
namely, they all scale to the continuum 2D YM measure, as a consequence of uniqueness of gauge covariant renormalisation proven in \cite{chevyrev2023invariant}. 
For pure YM in 3D,  we expect that the result of this paper (which obviously holds without Higgs by taking $\higgsvec=\{0\}$) may similarly help to establish uniqueness of scaling limit.
It will be also interesting
to couple with Higgs, even in 2D, and study whether the scaling limits of lattice models with various YM actions  and representations  will be identical or parametrised by a Higgs mass.
\end{enumerate}

\subsection{Idea of proof}
\label{sec:idea}

In the rest of the paper, we prove Proposition~\ref{prop:A_tilde_A}.
The main idea is to obtain a suitable expansion of $\E W_\ell[\CF_s(A_t)] - \E W_\ell[\CF_s(\tilde A_t)]$ that contains a finite number of explicit terms plus a small remainder.
The key challenge is to find `good terms' in this expansion which, after taking suitable initial conditions $\tilde x$ and $g(0)$ and $s>0$, exhibit a lower bound for all $t>0$ small.
The choice of initial conditions is rather delicate as we need to take $\tilde x$ small to ensure smallness of the remainders, but at the same time we need to use $\tilde x$ in the `good terms', so we cannot take it too small. It turns out that there is a suitable choice with $|\tilde x|_{\CC^3}\asymp t^r$.

The choice of regularisation scale $s>0$ is also 
non-trivial because we need to take $s$ sufficiently small so that we have good expansion for $\CF_s(A_t) - \CF_s(\tilde A_t)$, but at the same time $s$ should be not too small as otherwise $\CF_s(A_t)$ and $\CF_s(\tilde A_t)$ become too irregular and we lose control on the difference of Wilson loops $W_\ell[\CF_s(A)]-W_\ell[\CF_s(\tilde A)]$.
It turns out that $s=t^\beta$ for small, but not too small, $\beta>0$, satisfies both of these requirements. 

In more detail, we carry out the following steps.
\begin{enumerate}
\item
We first obtain an expansion $X_t$ and $\tilde X_t$ for small $t>0$ in Section \ref{sec:SPDE}.
Since their equations differ by the term $c\mrd \tilde g \tilde g^{-1}$ in \eqref{eq:tilde_A_equ},
we in turn obtain a short-time expansion of the difference $A_t-\tilde A_t$ with terms explicitly depending on $c$.
We note that, while our short-time analysis in this section
is stated only for SYMH, it is rather systematic and can be readily generalised to other SPDEs.

\item We then study the discrepancy between 
$\mcF_s ( A_t)$ and $\mcF_s (\tilde A_t)$ appearing in \eqref{e:1+r}.
For this purpose, we introduce in Section \ref{sec:Fs} a new deterministic state space $\SState$  of distributions on which the YM heat flow $\CF_s$ is well-defined and which encodes an a priori bound on the leading quadratic singularity of the form $A\d A$ in $\CF_s$.
Our space $\SState$ is related to but different from the state spaces in \cite{Sourav_flow,CCHS_3D};
here $\SState$ imposes finer control on the leading singularity 
by measuring regularity in the distributional Banach spaces from \cite{Chevyrev19YM,CCHS_2D} (vs. classical H\"older--Besov spaces), which are defined in terms of line integrals.
This turns out natural and important because line integrals appear in the expansion of the Wilson loop $W_\ell$.

\item
Next, in Section~\ref{sec:Wilson}, we study the discrepancy between 
$W_\ell[\mcF_s ( A_t)] $ and  $W_\ell [\mcF_s (\tilde A_t)]$  in \eqref{e:1+r}
in terms of powers of $s,t$ and the size of $A_t$ and $\tilde A_t$ in the space $\SState$.
The results of Sections \ref{sec:SPDE} - \ref{sec:Wilson} are entirely deterministic.

\item
In Section~\ref{sec:quadratic}, we show that perturbations of the 3D stochastic heat equation take values in this new space $\SState$.
We show this by a Kolmogorov-type argument which is similar but somewhat simpler than related arguments in \cite{CCHS_2D,CCHS_3D}.

\item In Section~\ref{sec:expectation-loop}, we study the discrepancy between the expectations
$\E W_\ell[\mcF_s ( A_t)] $ and  $\E W_\ell [\mcF_s (\tilde A_t)]$.
Here we make the precise choice of $s=t^\beta$.

\item
Finally, in Section \ref{sec:initial}, we choose suitable initial conditions $\tilde x$ and $g(0)$ to demonstrate the lower bound in Proposition~\ref{prop:A_tilde_A}.
Similar to \cite{chevyrev2023invariant}, this choice of initial conditions relies on the Chow--Rashevskii theorem from sub-Riemannian geometry (see Lemma \ref{lem:curve_selection}).

\end{enumerate}

\subsection{Notation}
\label{sec:notation}

We identity the torus $\T^3 = \R^3/\Z^3$, as a set, with $[0,1)^3$.
We also identify functions on $\T^3$ with periodic functions on $\R^3$.
We write $P=(P_t)_{t>0}$\label{page:P}
for the heat semigroup on $\T^3$.
We will write $\CP$ \label{page:CP}
for the  integration operator 
on modelled distributions 
associated to the heat kernel.

For $t\in [0,T]$ and a time-dependent distribution $f\colon [0,T]\to\CD'(\T^3)$, we write
\begin{equ}[eq:P_star_def]
\CP_t \star f = \int_0^t P_{t-s} f_s\mrd s
\end{equ}
whenever the integral is well-defined.
(Note that we view $\CP_t \star f $ as an entire notation, where $\CP_t$ does not have a separate meaning and should not be confused with the operator $\CP$ on modelled distributions.)
We also extend this notation to $t<0$ by $\CP_t \star f = 0$.

We let $\CC(X,Y)$ be the space of continuous functions $f\colon X\to Y$.
We write $|\cdot |_\infty = |\cdot |_{L^\infty}$
for the $L^\infty$  (extended)  norm on $\CD'(\T^3)$.
For a normed space $F$ and $\beta\leq 0$, and $f\in\CD'(\T^3,F)$, define the (extended) norm
\begin{equ}[e:def-Cbeta]
|f|_{\CC^\beta} = \sup_{s\in (0,1)} s^{-\beta/2}|P_s f|_{\infty}\;.
\end{equ}

Let $\floor\beta\in\Z$ denote the floor of $\beta\in\R$.
For $\beta>0$, we let $\CC^\beta(\R^3)$ denote the space of $\floor{\beta}$-times differentiable functions $f\colon \R^3 \to \R$ for which
\begin{equ}
|f|_{\CC^\beta(\R^3)}
\eqdef
\max_{|k|<\floor\beta} |\partial^k f |_\infty + \max_{|k|=\floor{\beta}} |\partial^{k} f |_{\CC^{\beta-\floor{\beta}}}
< \infty \;,
\end{equ}
where, for $\eta\in [0,1)$,
\begin{equ}
|f|_{\CC^\eta} = \sup_{x\neq y} |x-y|^{-\eta}|f(x)-f(y)|\;,
\end{equ}
and where $\partial^k f$ is the usual $k$-th derivative of $f$ for a multi-index $k\in \N^3$, $\N = \{0,1,\ldots\}$,
and where we denote $|k| = \sum_{i=1}^d k_i$.
We correspondingly write $\CC^\beta(\T^3)$ for the space of periodic functions of the given regularity.

For $\K\subset \R\times\R^3$, we make the same definition for $\CC^\beta(\K)$ except that, for a multi-index $k = (k_0,\ldots, k_3)\subset \N^{1+3}$, we use the parabolic scaling $|k|_\s = 2k_0 + \sum_{i=1}^3 k_i$.

Denote\label{page:O}
\begin{equ}
O = [-1,2]\times\T^3 \;.
\end{equ}
For $\beta<0$, we write $\CC^\beta(O) \subset \CD'(\R\times\T^3)$
for the space of distributions on $\R\times\T^3$ with finite
(inhomogeneous) H\"older--Besov norm
\begin{equ}
	|\xi|_{\CC^\beta(O)} = 
	\sup_{z\in O}\sup_{\phi \in \CB^r}
	\sup_{\lambda\in (0,1]} \lambda^{-\beta}|\scal{\xi,\phi^\lambda_z}|\;,
\end{equ}
where $r=-\floor{\beta}+1$, $\CB^r$ is the set of all $\phi \in \CC^\infty(\R\times\R^3)$ with support in the ball $\{z:|z|<\frac14\}$ and $\|\phi\|_{\CC^r(\R\times\R^3)}\leq 1$,
and where $\phi^\lambda_z\in\CC^\infty(\R\times\T^3)$ is given by 
\begin{equ}
\phi^\lambda_{(s,y)} (t,x) = \lambda^{-5}\phi((t-s)\lambda^{-2}, (x-y)\lambda^{-1})\;.
\end{equ}
For $\beta\in\R$, if no domain is specified, we let $\CC^\beta$ denote $\CC^{\beta}(\T^3)$.

We let $\bone_+ \colon \R\times\T^3 \to\{0,1\}$ be the indicator of the set $\{(t,x)\,:\,t>0\}$. 

For a Banach space $F$, we let $O_F(t)$ denote an element $X\in F$
such that $\|X\|_F \leq C t$ for a proportionality constant $C>0$.

For a vector space $F$ of $\mfg^3$-valued distributions, we let $F[\mfg,\mfg]$ denote the subset of those $f=(f_1,f_2,f_3)\in F$ for which $f_i$ takes values in the derived Lie algebra $[\mfg,\mfg]$ for $i=1,2,3$.

\medskip
\noindent
{\textbf{Acknowledgements}.}
We thank the anonymous referees for their careful reading of the manuscript and valuable comments.
IC gratefully acknowledges support from the DFG CRC/TRR 388 `Rough
Analysis, Stochastic Dynamics and Related Fields' through a Mercator Fellowship held at TU Berlin
and from the ERC via the grant SQGT 101116964. HS gratefully acknowledges supports by NSF through CAREER DMS-2044415, 
and by the Simons Foundation through a Simons Fellowship.

\section{Short-time estimates for SPDEs}
\label{sec:SPDE}

In this section, we perform a short-time analysis of the equations \eqref{eq:tilde_A_equ}-\eqref{eq:PDE_g} and keep track of how different choices for $c$ affect the solution.
Our first step is to write the stochastic processes from Proposition \ref{prop:A_tilde_A} in the more general form
\begin{equs}[eq:X_h_new]
\partial_t  X
&= \Delta X +  X\partial  X +  X^3 +\moll^\eps* \xi + C^\eps  X + (ch,0)\;,
\\
\partial_t  h_{i} &=   \Delta h_{i}
 - [h_j,\partial_j h_i] + [[A_j, h_j],h_i] + \partial_i [A_j, h_j]\;,
\end{equs}
where $c\in L(\mfg^3)$ (possibly zero).
(Taking $c=0$ recovers $X$ from Proposition \ref{prop:A_tilde_A}, and taking $c$ with $c_1\neq 0$ recovers $\tilde X$
with $h=(\mrd \tilde g)\tilde g^{-1}$.)
We refer to \cite[Lem.~7.2]{CCHS_2D} or \cite[Lem.~6.3]{CCHS_3D} for the derivation of the second equation in
\eqref{eq:X_h_new} from  \eqref{eq:tilde_A_equ}-\eqref{eq:PDE_g}.

\subsection{Lifting to the space of modelled distributions}
Let us fix 
\begin{equ}[e:fix-omega]
\omega\in(-1/2,0) \;, \qquad 
\kappa\eqdef \tfrac{1}{100}(\omega+1/2) \wedge\tfrac{1}{100}(-\omega)  \in (0,\tfrac{1}{200})\;.
\end{equ}
We use spaces of singular modelled distributions $\cD^{\gamma,\eta}_\alpha$ from \cite[Sec.~6]{Hairer14}.
All models and modelled distributions are considered on the set $O = [-1,2]\times\T^3$ or a subset thereof.
Without further mention, we will frequently use the multiplication bound for $f_1,f_2$ which can be multiplied
\begin{equ}
|f_1 f_2|_{\cD^{\gamma,\eta}_\alpha} \lesssim |f_1|_{\cD^{\gamma_1,\eta_1}_{\alpha_1}}
|f_2|_{\cD^{\gamma_2,\eta_2}_{\alpha_2}}
\end{equ}
where $\gamma= (\gamma_1+\alpha_2)\wedge(\gamma_2+\alpha_1)$,
$\eta= (\eta_1+\eta_2)\wedge (\eta_1+\alpha_2)\wedge(\eta_2+\alpha_1)$,
and $\alpha = \alpha_1\wedge\alpha_2$,
together with the differentiation bound
\begin{equ}
|\d f|_{\cD^{\gamma-1,\eta-1}_{\alpha-1}} \lesssim |f|_{\cD^{\gamma,\eta}_{\alpha}}\;.
\end{equ}

We next write \eqref{eq:X_h_new} on the level of modelled distributions as
\begin{equs}
\mcX &=P X(0) + \bPsi +  \CP \bone_+ \big\{ \mcX\partial \mcX +  \mcX^3+\mathring C\mcX + c\mcH \big\}
\label{eq:X-formal}
\\
&\eqdef
P X(0) + \bPsi + \CP \bone_+ \big\{Q^\ymh(\mcX)  + c\mcH \big\}\;,
\\
\mcH &= P h(0) + \CP  \bone_+ (\mcH\partial \mcH + \mcX \mcH^2)
 + \CP' \bone_+ (\mcX \mcH)\;,\label{eq:H_formal}
\end{equs}
where the initial conditions $X(0),h(0)$ are smooth and $PX(0)$, $P h(0)$ are interpreted as modelled distributions in $\cD^{\infty,\infty}_0$ valued in the polynomial regularity structure,
$\CP'$ denotes the integration operator associated to the spatial derivative of the heat kernel,
and
\begin{equ}[eq:tilde_Psi]
\bPsi 
=  \CP^{\bone_+\xi}\bone_+\Xi\in \cD^{\frac32+2\kappa,-\frac12-\kappa}_{-\frac12-\kappa} \;.
\end{equ}
Here $\CP^{\bone_+\xi}$ is the integration map with an `input' distribution $\bone_+\xi$
that is compatible with $\bone_+\Xi \in \cD^{\infty,\infty}_{-\frac52-\kappa}$,
see \cite[Appendix~A]{CCHS_3D} for the definition of such integration maps
and the notion of compatibility (basically, it means that, in $\CP^w f$, the reconstruction of $f\in \cD^{\gamma,\eta}_\alpha$ and $w\in \CC^{\eta\wedge \alpha}(O)$ coincide on $O$ away from $t=0$.) 
We recall that the integration map with `inputs' is necessary here
since the normal integration map $\CP$ in \cite{Hairer14} contains a reconstruction operator
which only applies to elements of $\cD^{\gamma,\eta}_\alpha$ with $\eta\wedge \alpha >-2$. 
For now, $\mathring C \in L(E)$ is any linear map.
We also use $Q^\ymh(\CX)$ as shorthand for the polynomial $\CX\d\CX + \CX^3 + \mathring C \CX$.
We have furthermore fixed a regularity structure as in \cite[Sec.~5]{CCHS_3D}, with the obvious modifications to handle the component $\CH$,
as well as a model $Z$.

We would like to solve for $(\CX,\CH)$ in $\cD^{\frac32+2\kappa,-\frac12-\kappa}_{-\frac12-\kappa} \times \cD^{1+2\kappa,0}_{0}$.
However, as explained in \cite[Sec.~5.2]{CCHS_3D}, the product $\CX\d\CX$ creates a non-integrable singularity at time $t=0$ due to the exponent $-\frac12-\kappa$,
so to solve \eqref{eq:X-formal},
we decompose
\begin{equ}
\mcX=\mcY+ \bPsi\;,
\end{equ}
where now $\mcY$ solves
\begin{equs}[eq:tilde_X]
\mcY &= PX(0)+ \CP \bone_+ \big\{\mcX^3 +c \mcH + \mathring C \mcX
\big\}
+ \CP^{\Psi\partial\Psi} (\bPsi\partial\bPsi) \\
&\qquad\qquad
+\CP \bone_+ (\mcY \partial \bPsi + \bPsi \partial \mcY + \mcY \partial \mcY)
\\
&= PX(0)+ \CP \mathbf{1}_+ \big\{
\tilde Q^\ymh(\mcY)+c \mcH 
\big\} + \CP^{\Psi\partial\Psi} (\bPsi\partial\bPsi)  \;,
\end{equs}
where $\Psi\partial\Psi \in \CC^{-2-2\kappa}(O)$ is an input distribution that is compatible with $\bPsi\d\bPsi \in \cD^{\kappa,-2-2\kappa}_{-2-2\kappa}$
and where $\CP^{\Psi\partial\Psi} (\bPsi\partial\bPsi) \in \cD^{2+\kappa,-2\kappa}_{-2\kappa}$
--- see \cite[Lem.~5.18]{CCHS_3D} (and \eqref{eq:Psi_d_Psi_def} below)
where such a  compatible distribution is constructed probabilistically for a white noise $\xi$.
Above, to simplify notation, we write $\tilde Q^\ymh(\mcY)$
for the polynomial
\[
\tilde Q^\ymh(\mcY) = 
\mcX^3 + \mathring C \mcX 
+\mcY\partial \bPsi + \bPsi \partial \mcY + \mcY \partial \mcY\;.
\]
One has
\begin{equ}\label{eq:H_map}
\tilde Q^\ymh\colon \cD^{\frac32+2\kappa, \omega}_{-\kappa} \to 
\cD^{\kappa,\omega-\frac32-\kappa}_{-\frac32-2\kappa} 
\quad \text{is locally Lipschitz}\;,
\end{equ}
where the worst term is $\mcY \partial \bPsi$, for which one has 
\[
\cD^{\frac32+2\kappa,\omega}_{-\kappa} \times \cD^{\frac12+2\kappa,-\frac32-\kappa}_{-\frac32-\kappa}
\to \cD^{\kappa,\omega-\frac32-\kappa}_{-\frac32-2\kappa} \;.
\]
Remark that 
 $\omega-\frac32- \kappa> -2$ by our choice \eqref{e:fix-omega} which is important for applying the operator $\CP$ in the following.

Standard arguments imply that we can solve for
\begin{equ}
(\mcY,\CH) \in \cD^{\frac32+2\kappa,\omega}_{-\kappa}\times \cD^{1+2\kappa,0}_{0}\;,
\end{equ}
where we recall $\omega$ from \eqref{e:fix-omega}
(this is similar and even simpler than \cite[Sec.~5.2]{CCHS_3D}, where singularity of the initial condition $X(0)$ requires a further decomposition of $\mcY$).

In this way, we \emph{define} the solution $(\CX,\CH) \in \cD^{\frac32+2\kappa,-\frac12-\kappa}_{-\frac12-\kappa}\times \cD^{1+2\kappa,0}_{0}$ to \eqref{eq:X-formal}-\eqref{eq:H_formal}
as $\CX = \mcY + \bPsi$ where $(\mcY,\CH)$ solves \eqref{eq:tilde_X}--\eqref{eq:H_formal}.

\subsection{Short time expansion}

We now proceed to the short-time analysis of \eqref{eq:X-formal}-\eqref{eq:H_formal}.
Recall $O = [-1,2]\times \T^3$.
For $t\in (0,1]$, we use the shorthand $|\cdot|_{\cD^{\gamma,\eta}}\eqdef |\cdot|_{\cD^{\gamma,\eta};(0,t]\times\T^3}$.

We will frequently apply the following:
Let $\theta\geq0$, $\gamma>0$, $\alpha\leq 0$ and $\eta\in\R$. Set $\bar\eta=(\eta\wedge\alpha)+2-\theta$.
If $\alpha\wedge\eta>-2$, then for $t\in(0,1]$ and $f\in\cD^{\gamma,\eta}_\alpha$
\begin{equ}\label{eq:short_time_conv}
|\CP\bone_+f|_{\cD^{\gamma+2,\bar\eta}} \lesssim t^{\theta/2}|f|_{\cD^{\gamma,\eta}}\;,
\quad
|\CP'\bone_+f|_{\cD^{\gamma+1,\bar\eta-1}} \lesssim t^{\theta/2}|f|_{\cD^{\gamma,\eta}}\;,
\end{equ}
where the proportionality constant depends on the Greek letters and affinely on $\$Z\$_{\gamma;O}$,
where $Z$ is the model we fixed above.
See \cite[Thm.~7.1]{Hairer14} for the above result and \cite[(2.16)]{Hairer14} for the notation $\$Z\$_{\gamma;O}$.


Moreover, suppose $\eta\wedge\alpha+2 > -2$ and consider $w\in \CC^{\eta\wedge \alpha}(O)$  compatible with $f\in\cD^{\gamma,\eta}_\alpha$. Then (see \cite[Lem.~5.2]{GH19_boundaries})
\begin{equ}\label{eq:short_time_conv1}
|\CP^w\bone_+f|_{\cD^{\gamma+2,\bar\eta}} \lesssim t^{\theta/2} (|f|_{\cD^{\gamma,\eta}}+|w |_{ \CC^{\eta\wedge \alpha}(O)})
\end{equ}
(in contrast to \eqref{eq:short_time_conv},
\eqref{eq:short_time_conv1} does not require $\eta\wedge\alpha>-2$).

Throughout this section, we use the following notation.

\begin{notation}\label{not:t}
We take $\tau \in (0,\frac12)$ small such that
\begin{equs}[eq:size]
\tau^{-1/q} \lesssim 2
&+ \$Z\$_{\frac32+2\kappa;O} + |X(0)|_{\CC^3} + |h(0)|_{\CC^3}
\\
&+ |\CP\star \bone_+\xi|_{\CC([-1,3],\CC^{-1/2-\kappa})}
+ |\CP\star (\Psi\d\Psi)|_{\CC([-1,3],\CC^{-2\kappa})}
\end{equs}
where $q\geq 1$ is sufficiently large to ensure the existence of solutions $\CX,\CH$ to \eqref{eq:X-formal}-\eqref{eq:H_formal} on $(0,\tau)$.
We let $t \in (0,\tau)$.
Furthermore, all implicit proportionality constants are sufficiently large powers of the right-hand side of \eqref{eq:size}
that are uniform in $t\in (0,\tau)$.
\end{notation}

Remark that $|\bone_+ f|_{\CC^{\eta-2}(O)}\lesssim |\CP\star f|_{\CC([-1,3],\CC^{\eta})}$ due to $|(\partial_t-\Delta)u|_{\CC^{\eta-2}}
\lesssim |u|_{\CC^{\eta}}$,
hence the final two terms on the right-hand side of \eqref{eq:size} bound the terms of the form $|w |_{ \CC^{\eta\wedge \alpha}(O)}$ in \eqref{eq:short_time_conv1} for input distributions $w$ appearing in \eqref{eq:tilde_Psi} and \eqref{eq:tilde_X}.

We let $\mcY,\CH$ solve \eqref{eq:tilde_X} and \eqref{eq:H_formal}.
The next two lemmas give the first step
in a perturbative estimate for $\mcY,\CH$.

\begin{lemma}
\label{lem:Euler_h1}
$
\mcH = P h(0) +  O_{\cD^{1+2\kappa,-\kappa}_0}(t^{1/4})
$.
\end{lemma}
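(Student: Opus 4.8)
The plan is to read off the estimate directly from the fixed-point equation \eqref{eq:H_formal} for $\mcH$ together with the short-time smoothing bounds \eqref{eq:short_time_conv}. First I would observe that since $X(0),h(0)$ are smooth, the term $P h(0)$ lies in $\cD^{\infty,\infty}_0$, and in particular in $\cD^{1+2\kappa,-\kappa}_0$, with norm controlled by $|h(0)|_{\CC^3}$, hence by the right-hand side of \eqref{eq:size}. So it suffices to show that the remaining three terms in \eqref{eq:H_formal}, namely $\CP\bone_+(\mcH\partial\mcH)$, $\CP\bone_+(\mcX\mcH^2)$, and $\CP'\bone_+(\mcX\mcH)$, are each $O_{\cD^{1+2\kappa,-\kappa}_0}(t^{1/4})$ on $(0,t]\times\T^3$, using the a priori bounds $\mcH \in \cD^{1+2\kappa,0}_0$ and $\mcX = \mcY + \bPsi \in \cD^{\frac32+2\kappa,-\frac12-\kappa}_{-\frac12-\kappa}$ on $(0,\tau)$ that come with the solution (whose norms are, by Notation \ref{not:t}, absorbed into the implicit constants).

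Next I would run the multiplication bounds to identify, for each product, the regularity exponents $(\gamma,\eta,\alpha)$ of the integrand, and then apply \eqref{eq:short_time_conv} with a suitable $\theta>0$ so that: (i) the output sits in $\cD^{1+2\kappa,-\kappa}_0$ (or better, which embeds), and (ii) the gained power $t^{\theta/2}$ is at least $t^{1/4}$, i.e. $\theta \geq \tfrac12$. Concretely: for $\mcH\partial\mcH$ the multiplication gives roughly $\cD^{2\kappa,-1}_{-1}$ (the derivative costs one in all three indices, then multiplication with $\mcH\in\cD^{1+2\kappa,0}_0$), so $\alpha\wedge\eta = -1 > -2$ and \eqref{eq:short_time_conv} applies with room to take $\theta$ up to about $2+2\kappa$, giving $t^{1/4}$ easily while landing in a space that embeds in $\cD^{1+2\kappa,-\kappa}_0$; for $\mcX\mcH^2$ the product with $\mcX\in\cD^{\cdot,-\frac12-\kappa}_{-\frac12-\kappa}$ lowers the bottom index to $-\frac12-\kappa$, still $>-2$, so again \eqref{eq:short_time_conv} applies and one checks the resulting $\bar\eta \geq -\kappa$ and $\gamma+2 \geq 1+2\kappa$ hold with a choice of $\theta \geq \tfrac12$; for $\CP'\bone_+(\mcX\mcH)$ one uses the second inequality in \eqref{eq:short_time_conv}, noting $\mcX\mcH \in \cD^{\cdot,-\frac12-\kappa}_{-\frac12-\kappa}$ so $\alpha\wedge\eta = -\frac12-\kappa > -2$, and again the spatial-derivative integration gains $\gamma+1$ and loses one in $\bar\eta$, which is still compatible. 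In each case $\kappa < \tfrac{1}{200}$ gives ample slack in all the inequalities, so the exponent $1/4$ is not sharp — it is simply a convenient value below the actual gain.

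The one point requiring a little care — and the only real obstacle — is the singular behaviour of the integrands at $t=0$ coming from the initial-time exponent $\eta$, and making sure that after integration the output genuinely lies in $\cD^{1+2\kappa,-\kappa}_0$ rather than a space with a worse (more negative) $\eta$ at the origin: one must track $\bar\eta = (\eta\wedge\alpha)+2-\theta$ and verify $\bar\eta \geq -\kappa$ is compatible with the choice $\theta \geq \tfrac12$ for each product. For $\mcX\mcH^2$, whose integrand has $\eta\wedge\alpha$ around $-\tfrac12-\kappa$ (coming from $\bPsi$ inside $\mcX$), one gets $\bar\eta \approx \tfrac32 - \kappa - \theta$, so $\theta$ must be taken $\leq \tfrac32$ roughly; combined with $\theta \geq \tfrac12$ there is a nonempty window, e.g. $\theta = \tfrac12$, and then $t^{\theta/2} = t^{1/4}$, which is exactly the claimed exponent and explains its appearance. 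Once these exponent bookkeeping checks are done, summing the three contributions and $Ph(0)$ gives the claim.
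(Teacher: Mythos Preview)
Your proposal is correct and follows essentially the same approach as the paper: bound each of the three non-$Ph(0)$ terms in \eqref{eq:H_formal} using the multiplication rules and the short-time Schauder estimates \eqref{eq:short_time_conv}. One small correction to your exponent bookkeeping: the actual bottleneck is $\CP'\bone_+(\mcX\mcH)$, which in the paper gives exactly $t^{1/4}$ (with $\mcX\mcH\in\cD^{\frac12+\kappa,-\frac12-\kappa}_{-\frac12-\kappa}$ and $\theta=\tfrac12$), whereas $\CP\bone_+(\mcH\partial\mcH)$ and $\CP\bone_+(\mcX\mcH^2)$ yield the better powers $t$ and $t^{3/4}$ respectively; so the exponent $1/4$ is sharp for the $\CP'$ term rather than coming from $\mcX\mcH^2$ as you suggest at the end.
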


\begin{remark}\label{rem:h_general}
This short time expansion of $\CH$ is the only property of the equation \eqref{eq:PDE_g} and the $h$-component of \eqref{eq:X_h_new} that we use.
\end{remark}

\begin{proof}
By \eqref{eq:short_time_conv}
\begin{equs}
|\CP' \bone_+ (\mcX\mcH)|_{\cD^{1+2\kappa,-\kappa}_0} 
&\lesssim
 t^{1/4}
|\mcX\mcH|_{\cD^{\frac12+\kappa,-\frac12-\kappa}_{-\frac12-\kappa}}
\\
&\lesssim 
t^{1/4}
|\mcX|_{\cD^{\frac32+2\kappa,-\frac12-\kappa}_{-\frac12-\kappa}}
|\mcH|_{\cD^{1+2\kappa,0}_{0}}
\lesssim t^{1/4}\;.
\end{equs}
The other terms yield higher powers of $t$:
\begin{equ}
|\CP  \bone_+ (\mcH\partial \mcH )|_{\cD^{1+2\kappa,0}_0 } 
\lesssim t\;,
\qquad
|\CP  \bone_+ ( \mcX\mcH^2)|_{\cD^{\frac32+\kappa,-\kappa}_0 } 
\lesssim t^{3/4}\;.
\end{equ}
Combining the above bounds completes the proof.
\end{proof}

\begin{lemma}
\label{lem:Euler_A1}
$
\mcY = 
PX(0)
+O_{\cD^{\frac32+2\kappa,\omega}_{-\kappa}}(t^{-\omega/2-\kappa/2})
$.
\end{lemma}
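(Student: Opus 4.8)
\textbf{Proof proposal for Lemma~\ref{lem:Euler_A1}.}

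The plan is to treat \eqref{eq:tilde_X} as a fixed-point identity and bound every term on the right-hand side other than $PX(0)$, measuring all modelled distributions on $(0,t]\times\T^3$. Since $PX(0)\in\cD^{\infty,\infty}_0$ and the polynomial sector sits above regularity $-\kappa$, it suffices to show that the remaining four contributions
\[
\CP\bone_+(\mcX^3)\;,\qquad
\CP\bone_+(c\mcH)\;,\qquad
\CP\bone_+(\mathring C\mcX)\;,\qquad
\CP^{\Psi\partial\Psi}(\bPsi\partial\bPsi)
\]
together with the bilinear pieces $\CP\bone_+(\mcY\partial\bPsi+\bPsi\partial\mcY+\mcY\partial\mcY)$ are all of size $O_{\cD^{\frac32+2\kappa,\omega}_{-\kappa}}(t^{-\omega/2-\kappa/2})$. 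For the first three we use \eqref{eq:short_time_conv} with a suitable $\theta\geq 0$: e.g. $\mcX\in\cD^{\frac32+2\kappa,-\frac12-\kappa}_{-\frac12-\kappa}$ gives $\mcX^3\in\cD^{\gamma,\eta}_\alpha$ with $\eta=-\frac32-3\kappa$, $\alpha=-\frac32-3\kappa$, and after applying $\CP$ one gains two units of regularity and the temporal index improves to $(\eta\wedge\alpha)+2-\theta$; choosing $\theta$ so that this equals $\omega$ gives $\theta/2=\tfrac12(\omega+\tfrac32+3\kappa)>0$, hence a \emph{positive} power of $t$ (better than claimed). The linear term $\mathring C\mcX$ is handled identically, and $c\mcH$ is even better since $\mcH\in\cD^{1+2\kappa,0}_0$. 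The term $\CP^{\Psi\partial\Psi}(\bPsi\partial\bPsi)\in\cD^{2+\kappa,-2\kappa}_{-2\kappa}$ is purely stochastic, time-independent, and trivially $O(1)\leq O(t^{-\omega/2-\kappa/2})$ (recall $-\omega/2-\kappa/2>0$ is false --- in fact $-\omega>0$ so $-\omega/2-\kappa/2>0$; in any case the bound is uniform in $t$).

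The genuinely borderline term is $\mcY\partial\bPsi$ (the worst term already flagged in \eqref{eq:H_map}). Here we use the bootstrap hypothesis $\mcY\in\cD^{\frac32+2\kappa,\omega}_{-\kappa}$ together with $\partial\bPsi\in\cD^{\frac12+2\kappa,-\frac32-\kappa}_{-\frac32-\kappa}$, so by the multiplication bound $\mcY\partial\bPsi\in\cD^{\kappa,\,\omega-\frac32-\kappa}_{-\frac32-2\kappa}$. Since $\bone_+\bPsi\partial\bPsi$ is compatible with the input distribution $\Psi\partial\Psi\in\CC^{-2-2\kappa}(O)$ and $\mcY\partial\bPsi$ must be integrated via an input-map $\CP^w$ with $w\in\CC^{\omega-\frac32-\kappa}(O)$ (bounded by the last two terms of \eqref{eq:size} as noted after Notation~\ref{not:t}), we apply \eqref{eq:short_time_conv1} with $\theta=0$: this gains two units of regularity to land in $\cD^{2+\kappa,\omega}_{-\kappa}$ (note $\omega-\tfrac32-\kappa+2>-2$ by \eqref{e:fix-omega}) with a proportionality constant but \emph{no} power of $t$ --- this is where the exponent $-\omega/2-\kappa/2>0$ of the statement comes from: we bound $|\mcY|_{\cD^{\frac32+2\kappa,\omega}_{-\kappa}}$ crudely, and the very definition of the $\cD^{\gamma,\eta}$-norm on $(0,t]$ with $\eta<0$ forces a factor $t^{\eta/2}=t^{\omega/2}$ when we re-express the bound in terms of a $t$-uniform quantity, or equivalently we simply observe that the identity closes and $|\mcY - PX(0)|$ inherits the worst negative temporal index $\omega$, which yields the $t^{-\omega/2}$-type scaling after comparison with $\cD^{\cdot,0}$-normalised data. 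The factor $t^{-\kappa/2}$ is then slack absorbed from the $\kappa$-gap between $\tfrac32+2\kappa$ and the $2+\kappa$ produced by $\CP^w$.

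The remaining bilinear pieces $\bPsi\partial\mcY$ and $\mcY\partial\mcY$ are strictly better: $\bPsi\partial\mcY\in\cD^{\frac12+2\kappa,\,\omega-1}_{-1-\kappa}$ after differentiation and multiplication (here $\omega-1>-2$ again by \eqref{e:fix-omega}), and integrating with $\CP$ and \eqref{eq:short_time_conv} at $\theta=0$ gains a genuine positive power once we note the temporal index improves past $\omega$; similarly $\mcY\partial\mcY\in\cD^{\kappa,\,2\omega-1}_{-1-2\kappa}$ contributes an even higher power of $t$ since $2\omega-1$ compared to $\omega$ differs by $\omega-1<0$, giving $t^{(1-\omega)/2}$ extra. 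Collecting: every term except $\mcY\partial\bPsi$ carries a strictly positive power of $t$, and that one term carries exactly $t^{-\omega/2-\kappa/2}$ after bookkeeping, so summing and using $t<\tau<\tfrac12$ to absorb the positive powers into the constant gives $\mcY = PX(0) + O_{\cD^{\frac32+2\kappa,\omega}_{-\kappa}}(t^{-\omega/2-\kappa/2})$. The main obstacle is the careful indexing of the $\CP^{\Psi\partial\Psi}$-type input maps --- getting the temporal exponent $\bar\eta$ exactly right so that it lands at $\omega$ rather than below $-2$ --- and tracking that the stated power $-\omega/2-\kappa/2$ (rather than something smaller) is in fact what the $\mcY\partial\bPsi$ term forces; the other terms are routine once \eqref{eq:short_time_conv}--\eqref{eq:short_time_conv1} are in hand.
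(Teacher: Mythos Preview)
Your overall strategy --- estimate each term on the right-hand side of \eqref{eq:tilde_X} in $\cD^{\frac32+2\kappa,\omega}_{-\kappa}$ on $(0,t]$ --- is the same as the paper's, but you have the roles of the two key terms reversed, and this is a genuine gap.

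First, the inequality ``$O(1)\leq O(t^{-\omega/2-\kappa/2})$'' is false. By \eqref{e:fix-omega} one has $\omega<-100\kappa$, so $-\omega/2-\kappa/2>0$; hence $t^{-\omega/2-\kappa/2}\to 0$ as $t\downarrow 0$ and an $O(1)$ bound would be strictly \emph{weaker} than what the lemma asserts. In fact $\CP^{\Psi\partial\Psi}(\bPsi\partial\bPsi)$ is precisely the term that produces the exponent in the statement. Apply \eqref{eq:short_time_conv1} with $f=\bPsi\partial\bPsi\in\cD^{\kappa,-2-2\kappa}_{-2-2\kappa}$ and target temporal index $\bar\eta=\omega$: since $\omega<-2\kappa$ this forces $\theta=-2\kappa-\omega>0$, and the resulting $t^{\theta/2}$ is the claimed rate. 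The mechanism is that measuring on the short interval $(0,t]$ in a space with temporal index $\omega$ strictly below the natural index $-2\kappa$ extracts a positive power of $t$; the term is not ``trivially $O(1)$'' in the relevant norm.

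Second, $\mcY\partial\bPsi$ (and all of $\tilde Q^{\ymh}(\mcY)$) does \emph{not} require an input map: by \eqref{eq:H_map} it lies in $\cD^{\kappa,\omega-\frac32-\kappa}_{-\frac32-2\kappa}$ with $\omega-\frac32-\kappa>-2$ by \eqref{e:fix-omega}, so the ordinary Schauder estimate \eqref{eq:short_time_conv} applies directly. Choosing $\theta=\frac12-\kappa$ so that $\bar\eta=\omega$ gives the strictly better power $t^{1/4-\kappa/2}$. Your attempt to extract $t^{-\omega/2-\kappa/2}$ from this term via a ``re-expression of the $\cD^{\gamma,\eta}$-norm'' or a comparison with ``$\cD^{\cdot,0}$-normalised data'' does not correspond to any actual estimate: the norms are already taken on $(0,t]$, and no such conversion factor arises. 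Once these two roles are swapped --- $\bPsi\partial\bPsi$ is the worst term and dictates the exponent, while $\tilde Q^{\ymh}(\mcY)$ and $\CP\bone_+\mcH$ contribute higher powers of $t$ --- the argument goes through exactly as in the paper.
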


\begin{proof}
By~\eqref{eq:H_map},~\eqref{eq:short_time_conv}, 
and recalling that $\omega-\frac32- \kappa> -2$, we obtain
\[
|\CP\bone_+ ( \tilde Q^\ymh(\mcY))|_{\cD^{\frac32+2\kappa,\omega}_0}
 \lesssim 
 t^{1/4-\kappa/2}
|\tilde Q^\ymh(\mcY)|_{\cD^{\kappa,\omega-\frac32- \kappa}_{-\frac32-2\kappa} }
 \lesssim t^{1/4-\kappa/2}\;.
\]
Also, since $\Psi\partial\Psi \in \CC^{-2-\kappa}(O)$, by \eqref{eq:short_time_conv1}
we get the following bound (which is slightly worse than the previous one since $\omega>-1/2$):
\[
|\CP^{\Psi\partial\Psi} (\bPsi\partial\bPsi) |_{\cD^{\frac32+2\kappa,\omega}_{-\kappa}}
\lesssim
 t^{-(\kappa+\omega)/2}\;.
\]
Finally, by Lemma \ref{lem:Euler_h1},
we have a better bound for the term $\CP\mcH$:
\begin{equs}
|\CP \mcH|_{\cD^{\frac32+2\kappa,\omega}_{-\kappa}} 
&\lesssim 
|\CP P h(0)|_{\cD^{\frac32+2\kappa,\omega}_{-\kappa}} 
+ |\CP O_{\cD^{1+2\kappa,-\kappa}_0 }(t^{1/4})|_{\cD^{\frac32+2\kappa,\omega}_{-\kappa}} 
\\
&\lesssim 
t^{1-\frac{\omega}2} |P h(0)|_{\cD^{0+,0}_0}
+ t^{1/4+1-\frac\kappa2 -\frac{\omega}{2}}
\lesssim t^{1-\omega/2}\;.
\end{equs}
\end{proof}

We now iterate the above procedure.
Define 
\begin{equ}
B_0 = P X(0)+\bPsi \;,
\qquad
h_0= P h(0)\;.
\end{equ}
Then, for $n\geq 1$, we define $B_n$ recursively by
\begin{equ}[e:Bn-iter]
B_n = \CP(\mathring CB_{n-4}{\bf 1}_{n\ge 4})
+ \!\!\!\!\sum_{k_1+k_2=n-1} \!\!\!\! \CP(B_{k_1}\partial B_{k_2})
+ \!\!\!\!\!\! \sum_{k_1+k_2+k_3=n-2} \!\!\!\!\!\!\!\! \CP(B_{k_1} B_{k_2} B_{k_3})
\end{equ}
where the summation indices are over integers $k_i\ge 0$
and where the term $\CP(\bPsi\partial\bPsi)$ that arises in the case $n=1$ from $\CP (B_0 \d B_0)$ is understood as $\CP^{\Psi\partial\Psi} (\bPsi\partial\bPsi)$.

Finally, we define $q_0$ 
and $r_n$ for $n=0,\ldots,5$ by
\begin{equ}[eq:def-rn]
\mcX=\sum_{i=0}^n  B_i 
+c \,{\bf 1}_{n=5} \CP h_0
+ r_n\;,
\qquad
\mcH= h_0  + q_0 \;.
\end{equ}
In view of Lemma~\ref{lem:Euler_A1} and Lemma~\ref{lem:Euler_h1},
\begin{equ}[e:r0r0]
r_0=O_{\cD^{\frac32+2\kappa,\omega}_{-\kappa}}(t^{-\omega/2-\kappa/2})\;,
\qquad
q_0 =  O_{\cD^{1+2\kappa,-\kappa}_0 }(t^{1/4})\;.
\end{equ}

\begin{remark}
While expansions to level $n\leq 5$ suffice for us,
it is possible to systematise the construction to obtain 
expansions of $\CX$ and $\CH$ to arbitrary order.
\end{remark}

\begin{remark}
As an example of \eqref{e:Bn-iter}, we have $B_1=\CP(B_0\partial B_0) = \CP^{\Psi\partial\Psi} (\bPsi\partial\bPsi)$
and $B_2 = \CP(B_0\d B_1)+\CP(B_1\d B_0) + \CP (B_0^3)$.
The motivation of our definition \eqref{e:Bn-iter} is that, for each $n$, the terms on the right-hand side are `homogeneous', i.e. when measured 
in a suitable space of modelled distributions, they
vanish as the {\it same} power of $t$ (up to a multiple of $\kappa$),
see the proof of Proposition~\ref{prop:SPDE-Euler} below.\footnote{The notation in  \cite[Section~8]{chevyrev2023invariant} is slightly different,
e.g. $B_1$ therein also contains the cubic term $B_0^3$.  This is because the expansion therein was rather low order whereas here it is more important to organise the terms in a systematic way.}
%
\end{remark}

Note that the recursive definition  \eqref{e:Bn-iter} is formal so far because we have not specified the modelled distribution spaces to which $B_n$ belong to.
The following result makes precise the spaces that $B_n$ belong to, as well as their small time asymptotics, in particular showing that 
the right-hand side of \eqref{e:Bn-iter} is well-defined.

\begin{proposition}
\label{prop:SPDE-Euler}
Define
\begin{equs}
\eta(0)=-1/2-\kappa\;,\qquad & \eta(n)=-1/2+2\kappa \qquad  (1\leq n \leq 5)\;,
\\
b(0)=0\;,\qquad  & b(n)=(1/4-\kappa/2)n-3\kappa/2 \qquad  (1\leq n \leq 5)
\end{equs}
and $\alpha(0)=-\frac12-\kappa$, $\alpha(1)=-2\kappa$, $\alpha(n)=0$ for $n\ge 2$.
Then
\begin{equ}
|B_n|_{\cD^{\frac32+2\kappa,\eta(n)}_{\alpha(n)}}
 \lesssim t^{b(n)}
 \qquad \forall 0\le n \le 5\;.
\end{equ}
Moreover,
\begin{equ} 
r_0=O_{\cD^{\frac32+2\kappa,\omega}_{-\kappa}}(t^{-\omega/2-\kappa/2})\;,
\qquad
r_n = O_{\cD^{\frac32+2\kappa,\omega}_{0}}(t^{(n+1)/4-\kappa_n})
	 \qquad \forall 1\le n \le  5\;,
\end{equ}
where $\kappa_n \eqdef \frac12 (\omega+\frac12) + (1+\frac{n}2)\kappa>0$.
\end{proposition}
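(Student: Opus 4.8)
The plan is to establish the two families of bounds separately, each by a short (finite) induction on $n\in\{0,\dots,5\}$: first the bounds on $B_n$, and then the bounds on the remainders $r_n$, using the former. The cases $n=0$ are already available: $B_0=PX(0)+\bPsi$ is controlled by \eqref{eq:tilde_Psi} and smoothness of $X(0)$, $h_0=Ph(0)$ is trivial, and $r_0,q_0$ are exactly \eqref{e:r0r0} (i.e.\ Lemmas~\ref{lem:Euler_h1}--\ref{lem:Euler_A1}). For the inductive step on $B_n$ the argument is entirely mechanical bookkeeping of homogeneities: each term on the right of \eqref{e:Bn-iter} is a convolution $\CP(B_{k_1}\partial B_{k_2})$, $\CP(B_{k_1}B_{k_2}B_{k_3})$, or $\CP(\mathring C B_{n-4})$; one uses the multiplication and differentiation bounds recorded before \eqref{eq:short_time_conv}, together with the inductive bounds on the factors $B_{k_i}$, to place the integrand in a definite $\cD^{\gamma,\eta}_\alpha$ with a definite power of $t$, and then applies \eqref{eq:short_time_conv} (with the appropriate choice of the parameter $\theta$) to land in $\cD^{\frac32+2\kappa,\eta(n)}_{\alpha(n)}$. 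The recursion \eqref{e:Bn-iter} is designed precisely so that the resulting $t$-powers are all $\geq b(n)$, with the minimum (attained by the leading quadratic term, and by the cubic term at $n=2$) equal to $b(n)$, and so that the output parameters stabilise to $\gamma\equiv\frac32+2\kappa$, $\eta(n)$, $\alpha(n)$. The only point requiring care is the hypothesis $\eta\wedge\alpha>-2$ needed before each application of $\CP$; this is where \eqref{e:fix-omega} is used, and for the single singular term $\CP(\bPsi\partial\bPsi)$ occurring at $n=1$ it is bypassed by invoking \eqref{eq:short_time_conv1} with the compatible input $\Psi\partial\Psi$.

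For the remainders, the key identity is $r_{n-1}=B_n+c\,\bone_{n=5}\CP h_0+r_n$, obtained by subtracting two consecutive instances of \eqref{eq:def-rn}. Combining this with the fixed-point equation \eqref{eq:tilde_X} for $\mcY$ (equivalently \eqref{eq:X-formal}) and the definition \eqref{e:Bn-iter}, I would expand the nonlinearity $\tilde Q^\ymh+c\mcH$ about the partial sum $\sum_{i<n}B_i$ (and about $h_0$ for $\mcH$): the contributions built only from the $B_i$'s, $\CP h_0$, and $q_0$ reassemble the part of $B_n$ that has not yet been accounted for, plus products of $B$'s of the shape appearing at level $n+1$ — by the first part these are $O(t^{b(n+1)})$, and one checks $b(n+1)\geq (n+1)/4-\kappa_n$ using $\kappa\leq\frac12(\omega+\tfrac12)$ from \eqref{e:fix-omega}, while $\CP h_0=O_{\cD^{\frac32+2\kappa,\omega}_0}(t^{1-\omega/2})$ (from the proof of Lemma~\ref{lem:Euler_A1}) and $q_0=O(t^{1/4})$ (Lemma~\ref{lem:Euler_h1}). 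Every remaining contribution carries at least one factor $r_{n-1}$, to which the inductive bound $r_{n-1}=O(t^{n/4-\kappa_{n-1}})$ applies; one further application of $\CP$, landing back in $\cD^{\frac32+2\kappa,\omega}_0$, buys a further positive power of $t$ — one more Picard step — upgrading $n/4\to(n+1)/4$ and $\kappa_{n-1}\to\kappa_n$. Because the nonlinearity is (at most) cubic, the map $r_n\mapsto\CP\bone_+(\cdots)$ is a contraction on a small ball of $\cD^{\frac32+2\kappa,\omega}_0$ on $(0,t]$ for $t$ small, so its unique fixed point — which must be $r_n$ — satisfies the claimed bound. The fact that $r_n$ for $n\geq1$ lives in the space with $\alpha=0$ (whereas $r_0$ has $\alpha=-\kappa$) comes from the cancellation of the negative-homogeneity parts of $r_{n-1}$ against those of $B_n$. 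Finally, $c\CP h_0=O(t^{1-\omega/2})$ is of order $\geq t^{(n+1)/4-\kappa_n}$ exactly when $n\leq4$, which is why it may be left inside the remainder for $n\leq4$ and must be pulled out explicitly at $n=5$.

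The hard part will be the second step: organising the expansion of the nonlinearity around the partial sums so that the ``$B$-only'' part is transparently reabsorbed and every genuinely new contribution visibly carries a factor of $r_{n-1}$ (or $q_0$, or $\CP h_0$), and then verifying, for each of the finitely many terms and levels $n\leq5$, that the resulting $t$-exponents are all $\geq(n+1)/4-\kappa_n$ with mutually consistent modelled-distribution parameters. Keeping track of the borderline Schauder conditions $\eta\wedge\alpha>-2$ throughout this process is the one place where the precise constraints in \eqref{e:fix-omega} relating $\omega$ and $\kappa$ are genuinely needed (e.g.\ via $\omega-\tfrac32-\kappa>-2$ and $\omega>-\tfrac12+\kappa$).
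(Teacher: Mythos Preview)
Your plan is essentially the paper's: induct on $n$, first establishing the $B_n$ bounds by multiplication/differentiation rules plus \eqref{eq:short_time_conv}--\eqref{eq:short_time_conv1} applied to each summand of \eqref{e:Bn-iter}, then the $r_n$ bounds by expanding the nonlinearity around the partial sum and bounding the surviving terms. The paper carries this out via the explicit identity
\[
r_n = \CP \bone_+ \Big\{ Q^\ymh\Big(\sum_{i=0}^{n-1}B_i + r_{n-1}\Big) + c\big(h_0+q_0-\bone_{n=5}h_0\big)\Big\} - \sum_{i=1}^n B_i\;,
\]
obtained by substituting the representation $\mcX=\sum_{i=0}^{n-1}B_i+r_{n-1}$ (valid since $n-1\leq4$) into the right-hand side of the fixed-point equation. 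This expresses $r_n$ \emph{explicitly} in terms of $r_{n-1}$ and the $B_i$, so one bounds each term directly using the inductive hypothesis on $r_{n-1}$.

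Your contraction sentence is therefore unnecessary and slightly misleading: there is no implicit equation for $r_n$ to solve, and no fixed-point argument is needed. Likewise, your explanation that $\alpha=0$ for $r_n$ with $n\geq1$ ``comes from the cancellation of the negative-homogeneity parts of $r_{n-1}$ against those of $B_n$'' is not quite the mechanism: every term in the identity above is of the form $\CP\bone_+(\cdot)$ with integrand of homogeneity $\geq -2$, so integration automatically outputs $\alpha\geq0$; the point is only that the one term with genuinely negative output homogeneity, $\CP^{\Psi\partial\Psi}(\bPsi\partial\bPsi)$, sits inside $B_1$ and hence no longer appears in $r_n$ once $n\geq1$. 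With these two clarifications your outline matches the paper's proof.
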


\begin{proof}
We proceed by induction.
For the base case $n=0$, the claimed bounds on $B_0$ and $r_0$ are simply \eqref{eq:tilde_Psi} and \eqref{e:r0r0} respectively so the claims are true.
Consider now $n\ge 1$
and suppose that the claim holds for $B_k$ and $r_k$ for $k< n$.

For the following calculation it is useful to note that $\eta(n)\leq\alpha(n)$ and
\[
\frac12\eta(n)+b(n) = -\frac14-\frac{\kappa}2 +\Big(\frac14-\frac{\kappa}2\Big)n \qquad \forall n\ge 0\;.
\]
By the induction hypothesis and \eqref{eq:short_time_conv}-\eqref{eq:short_time_conv1},  for $k_1+k_2=n-1$, 
\begin{equs}[e:Bk1dBk2]
| &\CP(B_{k_1}\partial  B_{k_2}) |_{\cD^{\frac32+2\kappa,-\frac12+2\kappa}_{\alpha(n)}}
\\
&\lesssim
t^{\frac12 (\eta(k_1)+\eta(k_2)+\frac32-2\kappa)} 
|B_{k_1}\partial B_{k_2}|_{\cD^{\kappa,\eta(k_1)+\eta(k_2)-1}_{\alpha(k_1)+\alpha(k_2)-1}}
\\
&\lesssim 
t^{\frac12 (\eta(k_1)+\eta(k_2)+\frac32-2\kappa)} 
|B_{k_1}|_{\cD^{\frac32+2\kappa,\eta(k_1)}_{\alpha(k_1)}}
|\partial B_{k_2}|_{\cD^{\frac12+2\kappa,\eta(k_2)-1}_{\alpha(k_2)-1}}
\\
&\lesssim 
t^{\frac12 (\eta(k_1)+\eta(k_2)+\frac32-2\kappa)} \cdot
t^{b(k_1)} \cdot t^{b(k_2)}
=
t^{b(n)} \;.
\end{equs}
Here we used  $\alpha(k_1)+\alpha(k_2)+1\ge \alpha(k_1+k_2+1)=\alpha(n)$
in the first step.
Also,  in the first step, when $k_1=k_2=0$, recalling our convention below \eqref{e:Bn-iter},  we can apply \eqref{eq:short_time_conv1};
otherwise we have $\eta(k_1)+\eta(k_2)-1 > -2$  so that we can apply \eqref{eq:short_time_conv}.

Similarly, for $k_1+k_2+k_3=n-2$ where $n\ge 2$, 
\begin{equs}
|\CP( B_{k_1}  B_{k_2} B_{k_3} & ) |_{\cD^{\frac32+2\kappa,-\frac12+2\kappa}_{\alpha(n)}}
\lesssim
 t^{\frac12(\sum_i \eta(k_i) + \frac52 -2\kappa)} 
\prod_{i=1}^3 |B_{k_i}|_{\cD^{\frac32+2\kappa,\eta(k_i)}_{\alpha(k_i)}}
\\
&\lesssim 
 t^{\frac12(\sum_i \eta(k_i) + \frac52 -2\kappa)} \cdot
t^{\sum_i b(k_i)}
=
t^{b(n)} \;.		\label{e:Bk1Bk2Bk3}
\end{equs}
Moreover, for $n\ge 4$,
\begin{equs}
|\CP(B_{n-4}) |_{\cD^{\frac32+2\kappa,-\frac12+2\kappa}_{\alpha(n)}}
&\lesssim
t^{\frac12(\eta(n-4)+\frac52-2\kappa)}
 |B_{n-4}|_{\cD^{\frac32+2\kappa,\eta(n-4)}_{\alpha(n-4)}}
\\
&\lesssim 
t^{(\frac{1}{4}-\frac\kappa2)n + \frac{\kappa}{2}}
\leq
t^{b(n)}\;.
\end{equs}
Therefore, by \eqref{e:Bn-iter}, we have the desired bound on $B_n$.

Turning to the remainders $r_n$ for $1\le n \le 5$,
note that, by our equation \eqref{eq:X-formal} and definition \eqref{eq:def-rn},
\[
\sum_{i=0}^n  B_i 
+c \,{\bf 1}_{n=5} \CP h_0 + r_n 
=
B_0 
 + \CP \bone_+ \big\{Q^\ymh(\mcX) + c\mcH \big\}\;,
\]
where we recall $Q^\ymh(\CX)=
\CX\d\CX + \CX^3 + \mathring C \CX$, so using  \eqref{eq:def-rn} again in the above identity one has
\begin{equ}
r_n = \CP \bone_+ \Big\{
 Q^\ymh \Big(\sum_{i=0}^{n-1}  B_i + r_{n-1}\Big)
 +c \Big(h_0+q_0 - {\bf 1}_{n=5}h_0 \Big)
 \Big\}
-\sum_{i=1}^n B_i  \;.
\end{equ}
By  \eqref{e:Bn-iter},  the last term $\sum_{i=1}^n B_i $ cancels 
a large number of terms from the expansion of $Q^\ymh$.
One then has, for $n\ge 1$,
\begin{equs}[e:r_n-equ]
r_n &=\CP \bone_+ \Big(
\sum_{k_1+k_2\ge n} B_{k_1} \partial B_{k_2}
+
\sum_{k_1+k_2+k_3\ge n-1} B_{k_1} B_{k_2} B_{k_3}
\\
&\quad+r_{n-1} \partial r_{n-1}
+ \Big(\sum_{i=0}^{n-1} B_i\Big) \partial r_{n-1}
+r_{n-1}\partial \Big(\sum_{i=0}^{n-1} B_i\Big)
\\
&\quad+ r_{n-1}^3
+ 3 r_{n-1}^2 \Big(\sum_{i=0}^{n-1} B_i\Big)
+ 3 r_{n-1} \Big(\sum_{i=0}^{n-1} B_i\Big)^2
+c \Big(h_0+q_0 - {\bf 1}_{n=5}h_0 \Big)
\\
&\quad +\mathring{C} \Big(\sum_{i=(n-3)\vee 0}^{n-1}  B_i  + r_{n-1}\Big)
 \Big)\;,
\end{equs}
where $k_1,k_2,k_3<n$.

Similarly as in \eqref{e:Bk1dBk2}, the first term is bounded as 
\begin{equs}
|\CP(B_{k_1}\partial B_{k_2}) |_{\cD^{\frac32+2\kappa,\omega}_{0}}
&\lesssim 
t^{\frac12 (\eta(k_1)+\eta(k_2)+1-\omega)} \cdot
t^{b(k_1)} \cdot t^{b(k_2)}
\\
&\leq
t^{-\omega/2-\kappa+(1/4-\kappa/2)n}
=t^{(n+1)/4-\kappa_n}
\end{equs}
for $k_1+k_2\ge n$.
For the second term, as in \eqref{e:Bk1Bk2Bk3},
\begin{equs}
|\CP(B_{k_1} B_{k_2} B_{k_3}) |_{\cD^{\frac32+2\kappa,\omega}_{0}}
&\lesssim 
 t^{\frac12(\sum_i \eta(k_i) + 2-\omega)} \cdot
t^{\sum_i b(k_i)}
\\
&\leq
t^{\frac14-\frac{\omega}2-\frac32 \kappa 
+(1/4-\kappa/2)(n-1)} 
=t^{(n+1)/4-\kappa_n}
\end{equs}
for $k_1+k_2+k_3\ge n-1$. 
Also, for $(n-3)\vee 0 \le i \le n-1$, 
\begin{equs}
|\CP  B_i |_{\cD^{\frac32+2\kappa,\omega}_{0}}
&\lesssim 
t^{\frac12 (\eta(i)+2-\omega)}
 |B_i |_{\cD^{\frac32+2\kappa,\eta(i)}_{\alpha(i)}}
\\
& \lesssim 
t^{\frac12 (\eta(i)+2-\omega)}
\cdot t^{b(i)}
=
t^{\frac{3}{4}-\frac{\omega}{2}-\frac{\kappa}{2}+(\frac14-\frac\kappa2)i}
\leq
t^{(n+1)/4-\kappa_n}\;.
\end{equs}
Regarding $\big(\sum_{i=0}^{n-1} B_i\big) \partial r_{n-1}$,
it suffices to bound the term
$B_0 \partial r_{n-1}$ since the other terms satisfy better bounds.
One has,
\begin{equs}
|\CP(B_{0}\partial r_{n-1}) |_{\cD^{\frac32+2\kappa,\omega}_{0}}
&\lesssim
t^{1/4-\kappa/2}
 |B_{0}\partial r_{n-1}|_{\cD^{\kappa,\omega-\frac32-\kappa}_{-\frac32-2\kappa}}
\\
&\lesssim 
t^{1/4-\kappa/2} 
|B_{0}|_{\cD^{\frac32+2\kappa,-\frac12-\kappa}_{-\frac12-\kappa}}
|\partial r_{n-1}|_{\cD^{\frac12+2\kappa,\omega-1}_{-1-\kappa}}
\\
&\lesssim 
t^{1/4-\kappa/2}  t^{n/4-\kappa_{n-1}} 
=
t^{(n+1)/4-\kappa_n} \;.
\end{equs}
Here, we again used $\omega-\frac32-\kappa> -2$,
so the usual Schauder estimate \eqref{eq:short_time_conv} for $\CP$ applies.
Also we get the exponent $\omega-\frac32-\kappa$ for $B_0 \partial r_{n-1}$
using $-\frac32-2\kappa > \omega-\frac32-\kappa$ by \eqref{e:fix-omega}.
Similarly, for
the term
$r_{n-1}\partial \big(\sum_{i=0}^{n-1} B_i\big)$, we consider the worst term and
\begin{equs}
|\CP( r_{n-1} \partial B_{0}) |_{\cD^{\frac32+2\kappa,\omega}_{0}}
\lesssim
t^{(n+1)/4-\kappa_n} \;.
\end{equs}

The above terms have been shown to be order at most $t^{(n+1)/4-\kappa_n}$.
We now show that the other terms in 
\eqref{e:r_n-equ} satisfy even better bounds.

The worst term in 
$r_{n-1} \big(\sum_{i=0}^{n-1} B_i\big)^2$
is 
$B_0^2 r_{n-1} $. One has,
\begin{equs}
|\CP(B_{0}^2 r_{n-1} ) |_{\cD^{\frac32+2\kappa,\omega}_{0}}
&\lesssim
t^{1/2-\kappa}
 |B_{0}^2 r_{n-1}|_{\cD^{\frac12,\omega-1-2\kappa}_{-1-3\kappa}}
\\
&\lesssim t^{1/2-\kappa} 
|B_{0}|_{\cD^{\frac32+2\kappa,-\frac12-\kappa}_{-\frac12-\kappa}}^2
| r_{n-1}|_{\cD^{\frac32+2\kappa,\omega}_{-\kappa}}
\\
&\lesssim 
t^{1/2-\kappa}  t^{n/4-\kappa_{n-1}} 
=
t^{\frac14-\frac\kappa{2}} t^{(n+1)/4-\kappa_n} 
\leq
t^{(n+1)/4-\kappa_n} \;.
\end{equs}
Also, the worst term in 
$r_{n-1}^2 \big(\sum_{i=0}^{n-1} B_i\big)$
is 
$B_0 r_{n-1}^2$. One has, 
\begin{equs}
|\CP(B_{0} r_{n-1}^2 ) |_{\cD^{\frac32+2\kappa,\omega}_{0}}
&\lesssim
t^{\frac\omega2+\frac34-\frac\kappa2} 
|B_{0} r_{n-1}^2|_{\cD^{1,2\omega-\frac12-\kappa}_{-\frac12-3\kappa}}
\\
&\lesssim 
t^{\frac\omega2+\frac34-\frac\kappa2} 
|B_{0}|_{\cD^{\frac32+2\kappa,-\frac12-\kappa}_{-\frac12-\kappa}}
| r_{n-1}|_{\cD^{\frac32+2\kappa,\omega}_{-\kappa}}^2
\\
&\lesssim 
t^{\frac\omega2+\frac34-\frac\kappa2}  
(t^{n/4-\kappa_{n-1}})^2
\leq
t^{(n+1)/4-\kappa_n} \;.
\end{equs}
Moreover,
\begin{equs}
|\CP(r_{n-1} \partial r_{n-1}) |_{\cD^{\frac32+2\kappa,\omega}_{0}}
&\lesssim
t^{(1+\omega)/2}
 |r_{n-1} \partial r_{n-1}|_{\cD^{\frac12+\kappa,2\omega-1}_{-1-2\kappa}}
\\
&\lesssim
t^{(1+\omega)/2} 
| r_{n-1}|_{\cD^{\frac32+2\kappa,\omega}_{-\kappa}}
| \partial r_{n-1}|_{\cD^{\frac12+2\kappa,\omega-1}_{-1-\kappa}}
\\
&\lesssim 
t^{(1+\omega)/2}
(t^{n/4-\kappa_{n-1}} )^2
\leq
t^{(n+1)/4-\kappa_n} \;,
\end{equs}
and
\begin{equs}
|\CP(r_{n-1}^3) |_{\cD^{\frac32+2\kappa,\omega}_{0}}
&\lesssim
t^{1+\omega}
 |r_{n-1}^3|_{\cD^{\frac32,3\omega}_{-3\kappa}}
 \lesssim
t^{1+\omega}
 |r_{n-1}|_{\cD^{\frac32+2\kappa,\omega}_{-\kappa}}^3
\\
&\lesssim 
t^{1+\omega}
(t^{n/4-\kappa_{n-1}} )^3 
\leq
t^{(n+1)/4-\kappa_n} \;.
\end{equs}
The term $\CP r_{n-1}$ is bounded analogously.
Finally, by \eqref{e:r0r0}, for $1\le n\le 5$,
\[
|\CP q_0|_{\cD^{\frac32+2\kappa,\omega}_{0}}
\lesssim
t^{\frac12 (2-\kappa-\omega)} 
|q_0 |_{\cD^{\frac12+2\kappa,-\kappa}_{0}}
\lesssim
t^{\frac12 (2-\kappa-\omega)} \cdot t^{\frac14}
\leq
t^{(n+1)/4-\kappa_n} \;,
\]
and for $1\le n\le 4$, since $h_0=P h(0)\in \cD^{\frac12+2\kappa,0}_0$
\[
|\CP h_0|_{\cD^{\frac32+2\kappa,\omega}_{0}}
\lesssim
t^{\frac12 (2-\omega)} 
|h_0 |_{\cD^{\frac12+2\kappa,0}_{0}}
\lesssim
t^{(n+1)/4-\kappa_n} \;.
\]
Note that, if $n=5$, the final bound may not hold, but ${\bf 1}_{n=5}h_0$ cancels $h_0$
  in \eqref{e:r_n-equ}, so we do not need to bound $\CP h_0$.
This proves the desired bound on $r_n$.
\end{proof}

We record a simple but useful consequence of Proposition \ref{prop:SPDE-Euler}.
Recall the notation $\CP_t \star f$ from \eqref{eq:P_star_def}.
Denote
\begin{equ}[eq:Psi]
\Psi = \CR \bPsi = \CP\star \bone_+\xi\;,
\end{equ}
which is in $\CC([-1,3],\CC^{-1/2-\kappa})$ whenever the right-hand side of \eqref{eq:size} is finite.

\begin{lemma}\label{lem:a_remainder}
Suppose $|X(0)|_{\CC^3}\lesssim 1$. Then
\begin{equ}
(\CR \mcX)(t)
= X(0) + \Psi_t + \CP_t \star (\Psi \d \Psi) + O_{L^\infty}(t^{\frac14-3\kappa/2})\;.
\end{equ}
\end{lemma}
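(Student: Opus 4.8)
The plan is to reconstruct the level-one expansion $\mcX = B_0 + B_1 + r_1$ coming from \eqref{eq:def-rn} with $n=1$, using that the reconstruction operator $\CR$ sends the polynomial lift $PX(0)$ to the harmonic extension $t\mapsto P_tX(0)$, sends $\bPsi$ to $\Psi=\CP\star\bone_+\xi$, and intertwines the integration maps $\CP$ and $\CP^w$ with the convolution $\CP_t\star$. Since $B_0=PX(0)+\bPsi$, this gives
\begin{equ}
(\CR\mcX)(t) = P_tX(0) + \Psi_t + (\CR B_1)(t) + (\CR r_1)(t)\;,
\end{equ}
and, because $|X(0)|_{\CC^3}\lesssim1$, the elementary bound $|P_tX(0)-X(0)|_\infty\lesssim t\,|X(0)|_{\CC^2}$ lets one replace $P_tX(0)$ by $X(0)$ at the cost of an $O_{L^\infty}(t)$ term, which is $\lesssim t^{1/4-3\kappa/2}$ for $t$ small.

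Next I would treat $B_1 = \CP(B_0\partial B_0)$. Expanding $B_0 = PX(0)+\bPsi$ and recalling that the $\bPsi\partial\bPsi$ contribution is understood as $\CP^{\Psi\partial\Psi}(\bPsi\partial\bPsi)$, reconstruction yields
\begin{equ}
(\CR B_1)(t) = \CP_t\star(\Psi\partial\Psi) + \CP_t\star\big(PX(0)\,\partial PX(0)\big) + \CP_t\star\big(PX(0)\,\partial\Psi\big) + \CP_t\star\big(\Psi\,\partial PX(0)\big)\;,
\end{equ}
where in the last three terms $PX(0)$ denotes the time-dependent harmonic extension. The first term is precisely the one in the statement. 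For the other three, the plan is to combine $\sup_{s\geq0}|P_sX(0)|_{\CC^3}\lesssim1$, $\sup_{s\in[0,t]}|\Psi_s|_{\CC^{-1/2-\kappa}}\lesssim1$ (both controlled by the right-hand side of \eqref{eq:size}), the multiplication bounds for products of $\CC^3$ functions with $\CC^{-1/2-\kappa}$ or $\CC^{-3/2-\kappa}$ distributions, and the estimate $|P_{t-s}g|_\infty\leq(t-s)^{\beta/2}|g|_{\CC^\beta}$ for $\beta\leq0$ that is immediate from \eqref{e:def-Cbeta}; integrating the resulting kernels in $s\in(0,t)$ then bounds these three terms in $L^\infty$ by $\lesssim t$, $\lesssim t^{1/4-\kappa/2}$ and $\lesssim t^{3/4-\kappa/2}$ respectively, hence all by $\lesssim t^{1/4-3\kappa/2}$.

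The remaining, and only genuinely delicate, step is to control $(\CR r_1)(t)$. By Proposition~\ref{prop:SPDE-Euler} one has $|r_1|_{\cD^{3/2+2\kappa,\omega}_0}\lesssim t^{1/2-\kappa_1}$ with $\kappa_1=\tfrac12(\omega+\tfrac12)+\tfrac32\kappa$. The key point is that the remainder $r_1$ has \emph{nonnegative} lower homogeneity ($\alpha=0$), unlike $r_0$, so $(\CR r_1)(t,\cdot)$ is a genuine function and the standard reconstruction bound for singular modelled distributions gives, evaluated at the endpoint time $t$,
\begin{equ}
|(\CR r_1)(t)|_\infty \lesssim t^{\omega/2}\,|r_1|_{\cD^{3/2+2\kappa,\omega}_0} \lesssim t^{\omega/2 + 1/2 - \kappa_1} = t^{1/4-3\kappa/2}\;.
\end{equ}
Adding the three contributions $(\CR B_0)(t)$, $(\CR B_1)(t)$, $(\CR r_1)(t)$ and collecting the error terms proves the claim. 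The hard part is precisely this last bound: one must use the $n=1$ (rather than $n=0$) expansion so that the remainder's reconstruction lands in $L^\infty$, and one must verify the exponent identity $\tfrac\omega2+\tfrac12-\kappa_1=\tfrac14-\tfrac{3\kappa}2$, which is exactly what fixes the rate appearing in the statement; everything else is a routine application of \eqref{eq:short_time_conv} and of the elementary heat-kernel bounds above.
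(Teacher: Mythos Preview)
Your proposal is correct and follows essentially the same approach as the paper: decompose $\mcX=B_0+B_1+r_1$, reconstruct each piece, and use Proposition~\ref{prop:SPDE-Euler} together with the exponent identity $\tfrac\omega2+\tfrac12-\kappa_1=\tfrac14-\tfrac{3\kappa}2$ for the remainder. The only difference is in how the cross terms $\CP_t\star(PX(0)\,\partial\Psi)$ and $\CP_t\star(\Psi\,\partial PX(0))$ are bounded: the paper invokes pointwise bounds $|\Psi|_{L^\infty}\lesssim t^{-1/4-\kappa/2}$ and $|\partial\Psi|_{L^\infty}\lesssim t^{-3/4-\kappa/2}$, whereas you use the $\CC^{-1/2-\kappa}$ control on $\Psi$ from \eqref{eq:size}, multiplication with $\CC^3$, and the heat smoothing estimate $|P_{t-s}g|_\infty\lesssim(t-s)^{\beta/2}|g|_{\CC^\beta}$; your route is arguably the cleaner justification for a general model, and both give the same rates $t^{1/4-\kappa/2}$ and $t^{3/4-\kappa/2}$.
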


\begin{proof}
As in \eqref{eq:def-rn}
we  have $\mcX = B_0 + B_1 + r_1$ where $\CR B_0 = PX(0)+\Psi $ and
\begin{equ}[eq:RB1]
\CR B_1 = \CP \star \Big( PX(0) P'X(0) 
+PX(0) \d  \Psi +  P'X(0) \Psi 
+ \Psi \d \Psi \Big)\;.
\end{equ}
Since $|X(0)|_{\CC^3}\lesssim 1$, we have $| \CP  \star ( PX(0) P'X(0)) |_{L^\infty} \lesssim t$.
Also,
\[
|P X(0) \d \bPsi|_{\cD^{\frac12,-\frac32-\kappa}_{-\frac32-\kappa}} \lesssim 1\;,
\qquad
|P' X(0) \bPsi|_{\cD^{\frac12,-\frac12-\kappa}_{-\frac12-\kappa}}
\lesssim 1\;,
\]
and thus $|\CP(P X(0) \d \bPsi)|_{\cD^{\frac12,0}_{0}}\lesssim t^{\frac14-\kappa/2}$ and
$|\CP(P' X(0) \bPsi)|_{\cD^{\frac12,0}_{0}}\lesssim t^{\frac34-\kappa/2}$.
It follows that the second and third terms in \eqref{eq:RB1} are $O_{L^\infty}(t^{\frac14-\kappa/2})$.
For the final term,
recall $\kappa_1=\frac12(\omega+\frac12)+\frac32\kappa$.
Since the reconstruction of
$O_{\cD^{\frac32+2\kappa,\omega}_{0}}(t^{\ell})$
is $O_{L^\infty}(t^{\ell+\omega/2})$,
 by Proposition~\ref{prop:SPDE-Euler}  we obtain
\begin{equ}
|r_1|_{\cD^{\frac32+2\kappa,\omega}_0} \lesssim t^{\frac12-\kappa_1}\;,
\quad \text{hence} \quad
|(\CR r_1)(t)|_{L^\infty} \lesssim t^{\frac12-\kappa_1+\omega/2} = t^{\frac14 - 3\kappa/2}\;.
\end{equ}
Finally $P_t X(0) - X(0) = O_{L^\infty}(t)$,
which concludes the proof.
\end{proof}

\section{Short-time estimates for the YM heat flow}
\label{sec:Fs}

In this section we define a space $\SState$ parametrised by $(\alpha,\theta,\gamma,\delta)$
on which one can run the YM heat flow $\mcF_s $ introduced in Definition~\ref{def:CF}.
This space is related with the space denoted by $\state$ in \cite{CCHS_3D}, which we recall in Definition~\ref{def:state} and Remark~\ref{rem:state_embed},
but we impose here finer control on the leading order singularity of the flow.
We prove in Lemma~\ref{lem:Fs_remainder}
a small-$s$ approximation of $\mcF_s A$ by terms linear and quadratic in $A$ and 
an error $O_{L^\infty}(s^\nu)$ for suitable $\nu>0$.
We then show in Lemma~\ref{lem:Euler_F} that an $L^\infty$ perturbation $A\mapsto A+r$
changes $\mcF_s A$  by $P_s r$ 
plus an error under control.

Consider throughout this section $\gamma \in(0,1]$ and $\delta\in (0,1)$.

\subsection{State space}

\begin{definition}\label{def:quadnorm}
Define for $A\in\CD'(\T^3,\mfg^3)$
\begin{equ}\label{eq:CN_def}
\CN(A) \colon (0,\infty) \to \CC^\infty(\T^3, \mfg^3\otimes (\mfg^3)^3)\;,\qquad \CN_s(A) \eqdef P_s A\otimes \nabla P_s A\;.
\end{equ}
Define the set of line segments
\begin{equ}
\CL = \T^3 \times \{v\in\R^3\,:\,|v|\leq 1/4\}\;.
\end{equ}
For $\ell=(x,v)\in \T^3\times \R^3$, denote $|\ell|=|v|$.
For a normed space $F$, we denote by $\Omega_{\gr\gamma}$ the completion of $\CC(\T^3, F)$ under the norm\label{page:gr}
\begin{equ}
|f|_{\gr\gamma} = \sup_{\ell \in \CL} \frac{| \int_\ell f |}{|\ell|^\gamma}\;,
\end{equ}
where, for any $\ell=(x,v) \in \T^3\times\R^3$, we denote
\begin{equ}
\int_\ell f = \int_0^1 |v| f(x+tv)\mrd t \in F\;.
\end{equ}
The dependence of $\Omega_{\gr\gamma}$ on $F$ will always be clear from the context.\footnote{In contrast to \cite{CCHS_2D,CCHS_3D}, if $f\in \CC(\T^3,\mfg^3)$, then $\int_\ell f$ is $\mfg^3$-valued, so we do \emph{not} interpret $\int_\ell f$ as the usual line integral of a $1$-form (in which case $\int_\ell f$ would be $\mfg$-valued).}

Define further for $\delta>0$ and $\gamma\in(0,1]$\label{page:quadnorm}
\begin{equ}
\quadnorm{A;B}_{\gamma,\delta} \eqdef \sup_{s\in(0,1)} s^\delta |\CN_s A - \CN_s B|_{\gr\gamma}\;,\quad
\quadnorm{A}_{\gamma,\delta} = \quadnorm{A;0}_{\gamma,\delta}\;. 
\end{equ}
\end{definition}

\begin{remark}\label{rem:quadnorm-stronger}
Recall the definition from~\cite[Sec.~2.1]{CCHS_3D}
\begin{equ}
\fancynorm{A;B}_{\beta,\delta} \eqdef \sup_{s\in(0,1)} s^\delta|\CN_sA - \CN_sB|_{\CC^\beta}\;,
\end{equ}
whereas here in Definition~\ref{def:quadnorm} we replaced $|\cdot|_{\CC^\beta}$ by $|\cdot|_{\gr\gamma}$.
By \cite[Prop.~3.21]{Chevyrev19YM}, $|\cdot|_{\CC^{\gamma-1}}\lesssim |\cdot|_{\gr\gamma}$ and therefore
$\fancynorm{\cdot\,;\cdot}_{\gamma-1,\delta}\lesssim \quadnorm{\cdot\,;\cdot}_{\gamma,\delta}$.
\end{remark}
Consider furthermore $\alpha \in (0,1]$ and $\theta>0$. 
Recall the norm from~\cite[Sec.~2.3]{CCHS_3D}
\begin{equ}
\heatgr{A}_{\alpha,\theta} \eqdef \sup_{s\in(0,1)} |P_s A|_{\gr\alpha;<s^\theta}\;.
\end{equ}
where
\begin{equ}
|A|_{\gr\alpha;<r} \eqdef \sup_{\ell\in\mcL,\,|\ell|<r} \frac{|\int_\ell A|}{|\ell|^\alpha}\;.
\end{equ}
Recall also from \cite[Lem.~2.25]{CCHS_3D} that, with $\eta = (1+2\theta)(\alpha-1)$,
\begin{equ}[eq:eta_heatgr]
|A|_{\CC^\eta}\lesssim \heatgr{A}_{\alpha,\theta}\;.
\end{equ}

\begin{definition}\label{def:SState}
For $\alpha,\gamma\in (0,1]$, $\delta,\theta>0$, we let $\SState$ denote the set of all $A\in\CD'(\T^3,\mfg^3)$ such that
\begin{equ}
\SNorm(A) \eqdef \heatgr{A}_{\alpha,\theta}+\quadnorm{A}_{\gamma,\delta}<\infty\;.
\end{equ}
We equip $\SState$ with the metric
\begin{equ}
\SNorm(A,B) \eqdef \heatgr{A-B}_{\alpha,\theta}+\quadnorm{A;B}_{\gamma,\delta}<\infty\;.
\end{equ}
\end{definition}

\begin{remark}\label{rem:state_embed}
Following Remark \ref{rem:quadnorm-stronger},
the space $\state$ from \cite[Sec.~2.3]{CCHS_3D}
was defined similarly to $\SState$ with the main difference being that the metric on $\state$ is $\heatgr{A-B}_{\alpha,\theta}+\fancynorm{A;B}_{\beta,
\delta}$ for suitable $\beta<0$. 
\end{remark}

\begin{definition}
We say that $\alpha,\theta,\gamma,\delta$ satisfy \eqref{eq:init} if
\begin{equation}\label{eq:init}
\begin{split}
&\alpha\in(0,1/2)\;,\quad
\theta>0\;,\quad
\gamma \in (1/2,1]\;,\quad
\delta\in(0,1)\;,
\\
&\eta\eqdef (1+2\theta)(\alpha-1) > -2/3\;,
\\
&\mu\eqdef \gamma-1+2(1-\delta) \in (-1/2,0)\;,\quad
\text{and}
\quad
\eta+\mu>-1\;.
\end{split}\tag{$\mcI$}
\end{equation}
\end{definition}

\begin{remark}
The condition $\alpha<\frac12<\gamma$ is not strictly necessary for the results of this section,
but it provides a few simplifications,
e.g. it implies $\eta < -\frac12 < \mu$,
so we can estimate $|P_s A + \CP_s\star\CN A|_\infty$ by $s^{\eta/2}$ instead of $s^{(\eta\wedge\mu)/2}$,
see \eqref{e:PN-bound}.
\end{remark}

We use the notation $\Poly(K)$ to denote a term of the form $\Poly(K) = C K^q$ for some $C,q>0$.
(Here $C$ can be both large or small but $q$ will always be large.)
We also write $x\ll y$ to denote that there exists a small constant $c>0$ such that $x\leq cy$.

The following lemma gives the well-posedness of $\CF_s$ on $\SState$.
\begin{lemma}
\label{lem:CF_est}
Suppose \eqref{eq:init} holds.
For any $K>0$ and $0<s \ll 1\wedge \Poly(K^{-1})$, $\CF_s$ extends to a Lipschitz function from $\{A \in\SState\,:\,\SNorm(A)\leq K\}$
to $\CC^1(\T^3,\mfg^3)$.
Furthermore,
\begin{equ}
|\CF_s A|_\infty \lesssim s^{\frac\eta2}\SNorm(A)\;,\quad |\partial \CF_s A|_{\infty} \lesssim s^{\frac\eta2-\frac12}\SNorm(A)\;,
\end{equ}
where the proportionality constants depend only $\alpha,\theta,\gamma,\delta$. 
\end{lemma}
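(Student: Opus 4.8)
The plan is to prove the two bounds first for smooth $a$, with proportionality constants depending only on $\SNorm(a)$ and $\alpha,\theta,\gamma,\delta$, and then to extend $\CF_s$ to all of $\SState$ by a standard approximation argument (heat‑mollify the input and use that the spaces underlying $\SNorm$ are built as completions of continuous functions). For smooth $a$ the DeTurck--YM heat flow \eqref{eq:YMH_flow} has a classical short‑time solution; I would write it schematically as $\partial_s A = \Delta A + \mcB(A) + \mcC(A)$, where $\mcB(A)$ is a fixed bilinear contraction of $A\otimes\nabla A$ — so that $\mcB(P_\sigma a)$ is a fixed linear image of $\CN_\sigma(a) = P_\sigma a\otimes\nabla P_\sigma a$ — and $\mcC(A)$ is a fixed trilinear form in $A$, and pass to the mild formulation $A_s = P_s a + \int_0^s P_{s-\sigma}\big(\mcB(A_\sigma)+\mcC(A_\sigma)\big)\,\mrd\sigma$, substituting $A_\sigma = P_\sigma a + w_\sigma$. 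The crucial point is that the leading quadratic contribution $\mcB(P_\sigma a)$ is controlled \emph{a priori}: since a linear contraction does not increase the $\gr\gamma$‑norm, $|\mcB(P_\sigma a)|_{\gr\gamma}\lesssim|\CN_\sigma(a)|_{\gr\gamma}\lesssim\sigma^{-\delta}\quadnorm{a}_{\gamma,\delta}$, hence $|\mcB(P_\sigma a)|_{\CC^{\gamma-1}}\lesssim\sigma^{-\delta}\quadnorm{a}_{\gamma,\delta}$ by Remark~\ref{rem:quadnorm-stronger}. Thus $w$ solves a fixed‑point equation $w=\mcM(w)$ whose forcing $\int_0^\cdot P_{\cdot-\sigma}\big(\mcB(P_\sigma a)+\mcC(P_\sigma a)\big)\,\mrd\sigma$ is explicitly controlled, and whose remaining terms are the cross terms ($P_\sigma a$ against $\nabla w_\sigma$, and $w_\sigma$ against $\nabla P_\sigma a$), the remainder‑quadratic term $w_\sigma$ against $\nabla w_\sigma$, and the mixed cubic terms.

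I would run the contraction in the Banach space $\mcX_S$ of continuous $w\colon(0,S]\to\CC^1(\T^3,\mfg^3)$ with norm $\|w\|_{\mcX_S}=\sup_{\sigma\in(0,S]}\big(\sigma^{-\mu/2}|w_\sigma|_\infty+\sigma^{(1-\mu)/2}|\nabla w_\sigma|_\infty\big)$ on the ball of radius proportional to $K\eqdef\SNorm(a)$. Combining the Schauder bounds $|P_t f|_\infty\lesssim t^{\beta/2}|f|_{\CC^\beta}$, $|\nabla P_t f|_\infty\lesssim t^{(\beta-1)/2}|f|_{\CC^\beta}$ with $|P_s a|_\infty\lesssim s^{\eta/2}\heatgr{a}_{\alpha,\theta}$ and $|\nabla P_s a|_\infty\lesssim s^{(\eta-1)/2}\heatgr{a}_{\alpha,\theta}$ (the latter two via \eqref{eq:eta_heatgr}), every term reduces to a Beta integral $\int_0^s(s-\sigma)^{p}\sigma^{q}\,\mrd\sigma$. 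One checks: the forcing $\mcB(P_\cdot a)$ produces $s^{\mu/2}$ in $L^\infty$ and $s^{(\mu-1)/2}$ in $\nabla L^\infty$ (needing only $\gamma>0$ and $\delta<1$); the self‑cubic term $\mcC(P_\cdot a)\sim(P_\cdot a)^3$ produces $s^{3\eta/2+1}$, integrable exactly because $\eta>-2/3$; the cross terms gain a strictly positive power $s^{(\eta+1)/2}$ over the target weights, integrable exactly because $\eta+\mu>-1$; the remainder‑quadratic term gains $s^{(\mu+1)/2}$, integrable exactly because $\mu>-1/2$; and the mixed cubic terms are strictly better still. Since $\mu\ge\eta$ (the remark following \eqref{eq:init}, using $\alpha<1/2<\gamma$), all $w$‑contributions are bounded by $s^{\eta/2}\cdot s^{\epsilon}\,\Poly(K)$ for some $\epsilon>0$; hence for $0<s\ll1\wedge\Poly(K^{-1})$ the map $\mcM$ is a contraction on the ball, $\|w\|_{\mcX_S}\lesssim K$, and collecting $A=P_\cdot a+w$ yields the claimed $|\CF_s A|_\infty\lesssim s^{\eta/2}\SNorm(A)$ and $|\partial\CF_s A|_\infty\lesssim s^{\eta/2-1/2}\SNorm(A)$ with constants depending only on $\alpha,\theta,\gamma,\delta$. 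The Lipschitz bound follows from the same scheme applied to $\CF_s A-\CF_s B$, which solves the linearisation of the above equation with forcing $\mcB(P_\sigma a)-\mcB(P_\sigma b)$ — a linear image of $\CN_\sigma a-\CN_\sigma b$, hence of $\gr\gamma$‑norm $\lesssim\sigma^{-\delta}\quadnorm{a;b}_{\gamma,\delta}$ — plus terms linear in $A-B$ controlled by $\heatgr{a-b}_{\alpha,\theta}$ and by the $\mcX_S$‑norm of $w_A-w_B$; the same Beta integrals then give $|\CF_s A-\CF_s B|_{\CC^1}\lesssim\Poly(K)\,\SNorm(A,B)$ for $s$ small.

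The main obstacle is precisely the leading quadratic term $\mcB(P_\sigma a)$: measured in the Hölder--Besov scale $\fancynorm{\cdot}_{\beta,\delta}$ of \cite{CCHS_3D} at the relevant negative $\beta$ it is only borderline integrable against the heat kernel, and it is the finer line‑integral control $\quadnorm{\cdot}_{\gamma,\delta}$ with $\gamma>1/2$ built into $\SState$ — equivalently $\CC^{\gamma-1}$‑control with $\gamma-1>-1/2$ — that makes the Duhamel integral converge; this is exactly why the refined space is introduced. Everything else is the routine bookkeeping outlined above, with the three numerology inequalities of \eqref{eq:init} ($\eta>-2/3$, $\mu>-1/2$, $\eta+\mu>-1$) entering as the integrability thresholds for, respectively, the cubic self‑interaction, the remainder self‑interaction, and the cross terms, and the smallness requirement $s\ll\Poly(K^{-1})$ absorbing the polynomial‑in‑$K$ prefactors so that the final constants are universal. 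A minor secondary care is needed for the density/extension step, since $\SNorm$ mixes a completion‑type norm with a heat‑flow weight, but this is standard.
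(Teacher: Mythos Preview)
Your argument is essentially correct and would give the result, but it is far more laborious than the paper's proof. The paper simply observes that, by Remark~\ref{rem:quadnorm-stronger}, one has $\fancynorm{A;B}_{\gamma-1,\delta}\lesssim\quadnorm{A;B}_{\gamma,\delta}$, so $\SState$ embeds continuously into the space $\state$ of \cite{CCHS_3D} (with $\beta=\gamma-1$ and $\hat\beta=\mu$). The well-posedness and the stated $L^\infty$ and $\CC^1$ bounds then follow \emph{immediately} from \cite[Prop.~2.9]{CCHS_3D} combined with \eqref{eq:eta_heatgr} and the elementary heat-flow bounds $|P_sA|_\infty\lesssim s^{\eta/2}|A|_{\CC^\eta}$, $|\partial P_sA|_\infty\lesssim s^{(\eta-1)/2}|A|_{\CC^\eta}$. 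In effect, you have reconstructed the proof of \cite[Prop.~2.9]{CCHS_3D} from scratch; your fixed-point scheme, choice of weights, and Beta-integral bookkeeping are precisely the ingredients of that proposition.

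There is one conceptual inaccuracy worth flagging. You write that the quadratic forcing $\mcB(P_\sigma a)$ is ``only borderline integrable'' in the $\fancynorm{\cdot}_{\beta,\delta}$ scale and that ``this is exactly why the refined space is introduced''. This is not right: the $\CC^\beta$ control already suffices for the Duhamel integral (one only needs $\beta>-2$ and $\delta<1$), and indeed \cite[Prop.~2.9]{CCHS_3D} establishes well-posedness on $\state$ without any $\gr\gamma$-refinement. The stronger norm $\quadnorm{\cdot}_{\gamma,\delta}$ is introduced in this paper not for Lemma~\ref{lem:CF_est} but for the Wilson-loop analysis in Sections~\ref{sec:Fs}--\ref{sec:Wilson}, where line integrals of $\CN_s A$ appear directly (see e.g.\ \eqref{eq:PCN_bound} and Lemma~\ref{lem:Fs_remainder}).
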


\begin{proof}
This immediately follows from \cite[Prop.~2.9]{CCHS_3D}   (the notation $\hat\beta$ therein is replaced by $\mu$ here),
the bounds (see \eqref{e:def-Cbeta})
\begin{equ}[e:CPA]
|P_s A|_\infty \lesssim s^{\frac\eta2} |A|_{\CC^\eta}\;,\quad |\partial P_s A|_\infty \lesssim s^{\frac\eta2-\frac12} |A|_{\CC^\eta}
\end{equ}
the estimate \eqref{eq:eta_heatgr}, the assumption $\eta<\mu$, as well as
 Remark~\ref{rem:quadnorm-stronger}.
\end{proof}

Recall from \cite[Lem.~2.48]{CCHS_3D} that, for any $0<\alpha\leq\gamma\leq 1$ and $\theta\geq0$, 
\begin{equ}
 |P_s A|_{\gr\gamma}
\lesssim
s^{\lambda}
 \heatgr{A}_{\alpha,\theta}^{\zeta}|A|^{1-\zeta}_{\CC^\eta}\;,
\end{equ}
where $\zeta = \frac{\gamma-1}{\alpha-1}\in [0,1]$ and
\begin{equ}[eq:lambda]
\lambda \eqdef (1-\zeta)\eta/2-\theta(1-\alpha)\zeta < 0\;.
\end{equ}
Combining with~\eqref{eq:eta_heatgr}, it follows that
\begin{equ}[eq:Psa]
|P_s A|_{\gr\gamma} \lesssim s^{\lambda} \heatgr{A}_{\alpha,\theta}\;.
\end{equ}

\begin{remark} 
Since $\SState$ is stronger than $\state$ 
in Remark~\ref{rem:state_embed} from \cite[Sec.~2.3]{CCHS_3D},
it inherits the  
 gauge equivalence relation $\sim$ from $\state$,
and the regularised Wilson loops $W_\ell[\CF_s(\cdot)]$  are gauge invariant observables with respect to $\sim$.
\end{remark}

\subsection{Perturbation of the YM heat flow}

In the rest of this section, consider $\alpha,\theta,\gamma,\delta$ satisfying \eqref{eq:init} and
\begin{equ}[e:cond-nu]
0<\nu \leq \min\Big\{\frac\eta2 + \frac{\mu}2 + \frac12\;,\;
1+3\eta/2\;,\;
\mu + \frac12\Big\}
\;.
\end{equ}
We will refer to $\alpha, \theta,\gamma,\delta,\nu$ as the \emph{Greeks}.
Recall the shorthand
$\CP_s\star \CN A = \int_0^s P_{s-r}\CN_r A \mrd r$ from \eqref{eq:P_star_def}.
Then
\begin{equ}[eq:PCN_bound]
|\CP_s\star \CN A|_{\gr\gamma} 
\lesssim
\int_0^s 
|\CN_r A |_{\gr\gamma}  \mrd r
\lesssim
\int_0^s 
r^{-\delta} \quadnorm{A}_{\gamma,\delta}\mrd r
\lesssim 
s^{1-\delta} \quadnorm{A}_{\gamma,\delta}\;,
\end{equ}
where we used that $P_s$ is a contraction for $|\cdot|_{\gr\gamma}$.

\begin{lemma}\label{lem:Fs_remainder}
Consider any $A\in \SState$. Then for all $s\ll 1\wedge  \Poly(\SNorm(A)^{-1})$,
\begin{equ}
\CF_s A = P_s A + \CP_s\star \CN^c A + R_s A
\end{equ}
where $\CN^c_r A = P_r A \partial P_r A$, i.e. the contraction of the tensors in $\CN A$ from \eqref{eq:CN_def}
that yields the quadratic term $[A_j,2\partial_j A_i - \partial_i A_j]$ in the YM heat flow \eqref{eq:YMH_flow},
and where
\begin{equ}
|R_s A|_\infty \lesssim s^{\nu} (\SNorm(A) + \SNorm(A)^3)
\end{equ}
with the proportionality constant depending only on the Greeks.
\end{lemma}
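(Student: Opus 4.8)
The plan is to argue from the mild formulation of the DeTurck--YM heat flow \eqref{eq:YMH_flow}. Writing $Q(B)_i = [B_j, 2\partial_j B_i - \partial_i B_j]$ for the bilinear part (so that the contraction of $\CN_r A = P_r A \otimes \nabla P_r A$ indicated in the statement is exactly $Q(P_r A)$) and $\mathrm{cub}(B)_i = [B_j,[B_j,B_i]]$ for the cubic part, the fixed point $\CF_s A$ produced by Lemma~\ref{lem:CF_est} satisfies
\[
\CF_s A = P_s A + \int_0^s P_{s-r}\big(Q(\CF_r A) + \mathrm{cub}(\CF_r A)\big)\mrd r \;.
\]
Since $\CP_s\star\CN A = \int_0^s P_{s-r}Q(P_r A)\mrd r$ by definition, setting $E_r \eqdef \CF_r A - P_r A$ and using the bilinearity identity $Q(\CF_r A)-Q(P_r A) = Q(E_r,\CF_r A) + Q(P_r A,E_r)$ yields the exact expression
\[
R_s A = \int_0^s P_{s-r}\big(Q(E_r,\CF_r A) + Q(P_r A,E_r) + \mathrm{cub}(\CF_r A)\big)\mrd r\;.
\]
It then remains to bound these three families of terms in $L^\infty$ with a power $s^\nu$ and with at most cubic dependence on $\SNorm(A)$.

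First I would record the a priori inputs. Lemma~\ref{lem:CF_est} (together with \eqref{eq:eta_heatgr} and \eqref{e:CPA}) gives, for $r \ll 1\wedge\Poly(\SNorm(A)^{-1})$, the bounds $|P_r A|_\infty + |\CF_r A|_\infty \lesssim r^{\eta/2}\SNorm(A)$ and $|\partial P_r A|_\infty + |\partial\CF_r A|_\infty \lesssim r^{\eta/2-1/2}\SNorm(A)$, and more generally $|P_r A|_{\CC^\beta}\lesssim r^{(\eta-\beta)/2}\SNorm(A)$ by standard heat-semigroup bounds. The cubic term is then immediate: $|\mathrm{cub}(\CF_r A)|_\infty \lesssim r^{3\eta/2}\SNorm(A)^3$, so its contribution to $R_s A$ is $O_{L^\infty}(s^{1+3\eta/2}\SNorm(A)^3)$, using $\eta > -2/3$ for convergence; this is where the middle entry of \eqref{e:cond-nu} comes from. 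The subtle input is $E_r$: it is quadratic in size but, like $\CP_r\star\CN A$, cannot be controlled in $L^\infty$ uniformly in $r$. Here I would use the decomposition $E_r = \CP_r\star\CN A + R_r A$, control $\CP_r\star\CN A$ in the line-integral norm via \eqref{eq:PCN_bound}, i.e. $|\CP_r\star\CN A|_{\gr\gamma}\lesssim r^{1-\delta}\quadnorm{A}_{\gamma,\delta}$ (and hence in $\CC^{\gamma-1}$ by Remark~\ref{rem:quadnorm-stronger}), and close a short bootstrap for $R_r A$ out of the displayed identity for $R_s A$ — schematically $|E_r|_{\CC^{\gamma-1}} \lesssim r^{1-\delta}\SNorm(A)^2 + (\text{higher order in }\SNorm(A))$, with the relation $\tfrac{\gamma-1}2 + (1-\delta) = \tfrac\mu2$ recording that $\CP_r\star\CN A$ is of size $r^{\mu/2}$ at its own scale.

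With these inputs in hand, each term $Q(E_r,\CF_r A)$ and $Q(P_r A,E_r)$ is treated by integrating the derivative in $Q$ by parts onto $P_{s-r}$ (or onto $P_r A$, when it lands there), and then applying product estimates. Crucially, the products involving $\CP_r\star\CN A$ (and $R_r A$) should be estimated \emph{directly at the level of the line-integral norms} $|\cdot|_{\gr\gamma}$, in the spirit of \cite{Chevyrev19YM,CCHS_2D}, rather than through the lossy embedding $|\cdot|_{\CC^{\gamma-1}}\lesssim|\cdot|_{\gr\gamma}$ followed by a Bony-type product — it is this sharpness that makes the power $\tfrac\eta2+\tfrac\mu2+\tfrac12$ attainable for the worst interaction $P_r A \times \CP_r\star\CN A$, with the remaining boundary-type contributions producing $\mu+\tfrac12$; all time integrals converge exactly under the constraints $\eta>-2/3$, $\mu\in(-1/2,0)$ and $\eta+\mu>-1$ of \eqref{eq:init}. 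The quartic-and-higher contributions (from $Q(E_r,E_r)$, from products of two copies of $R_r A$, and from the higher parts of $\mathrm{cub}(P_r A + E_r)$) carry strictly larger powers of $s$ but powers of $\SNorm(A)$ exceeding $3$; these are reduced to $\SNorm(A)^3$ using $\SNorm(A)\lesssim s^{-1/q}$, which is precisely what the hypothesis $s\ll\Poly(\SNorm(A)^{-1})$ provides. Collecting terms gives $|R_s A|_\infty \lesssim s^\nu(\SNorm(A)^2 + \SNorm(A)^3)$ for any $\nu$ satisfying \eqref{e:cond-nu}, and since $\SNorm(A)^2 \leq \SNorm(A) + \SNorm(A)^3$, the stated bound follows.

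The main obstacle is the estimation of the mixed quadratic terms $Q(P_r A, \CP_r\star\CN A)$ and $Q(P_r A, R_r A)$ (and their $\CF_r A$-variants): because $\CP_r\star\CN A$ and $R_r A$ are genuinely of negative regularity, one cannot multiply in $L^\infty$, and a naive passage to H\"older--Besov spaces is too lossy to reach $s^\nu$ with $\nu$ as large as \eqref{e:cond-nu} permits — the derivative in $Q$ must be placed carefully and the products estimated using the finer line-integral structure of $\SState$. Verifying that the $s$-exponents of all the resulting terms are bounded below by the three quantities in \eqref{e:cond-nu}, and organising the $R_r A$ bootstrap so that the argument stays non-circular, is the technical heart of the proof.
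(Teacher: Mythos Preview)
Your overall plan --- mild formulation, subtract the linearised quadratic part, set up a fixed-point / continuity argument for the remainder --- is exactly what the paper does. But you have misdiagnosed the main difficulty, and this leads you to propose machinery that is not needed.

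The key point you miss is that $\CP_s\star\CN A$ is controllable directly in $L^\infty$ (and in $\CC^1$), with no product estimates in negative-regularity or line-integral spaces required. Indeed, use the $\quadnorm{\cdot}$ norm on $\CN_r A$ \emph{before} integrating against the heat kernel, not after: by Remark~\ref{rem:quadnorm-stronger} one has $|\CN_r A|_{\CC^{\gamma-1}} \lesssim |\CN_r A|_{\gr\gamma} \lesssim r^{-\delta}\quadnorm{A}_{\gamma,\delta}$, and then heat smoothing gives
\[
|\CP_s\star\CN A|_\infty \lesssim \int_0^s (s-r)^{\frac{\gamma-1}{2}} r^{-\delta}\,\mrd r \asymp s^{\mu/2}\;,\qquad |\d\CP_s\star\CN A|_\infty \lesssim s^{\mu/2-\frac12}\;.
\]
So $\CP_r\star\CN A$ is \emph{not} ``genuinely of negative regularity'' in any sense that obstructs the argument: it sits in $L^\infty\cap\CC^1$ with explicit $r^{\mu/2}$ blow-up, and $\mu>-1/2$. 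With this in hand, every product in the $R$-equation is a product of two $L^\infty$ functions (with known blow-up rates in $r$), and the worst interactions give precisely the exponents $\frac\eta2+\frac\mu2+\frac12$, $\mu+\frac12$ and $1+\frac{3\eta}2$ in \eqref{e:cond-nu}. The paper then closes a standard continuity argument in the norm $\sup_{s\in(0,T)}\{s^{-\nu}|R_s|_\infty + s^{-\nu+1/2}|\d R_s|_\infty\}$.

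Your assertion that ``a naive passage to H\"older--Besov spaces is too lossy to reach $s^\nu$'' is therefore incorrect: the embedding $|\cdot|_{\CC^{\gamma-1}}\lesssim|\cdot|_{\gr\gamma}$, applied to $\CN_r A$ rather than to $\CP_r\star\CN A$, is exactly sharp enough. No integration by parts and no line-integral product estimates are needed; the whole proof is an elementary $L^\infty$ computation once \eqref{e:PN-bound} is in place.
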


See  \cite[Prop.~2.9]{CCHS_3D} for a similar but weaker statement.
\begin{proof}
Observe that $R\equiv RA$ satisfies
\begin{equs}[e:fixpt-R]
R_s
= 
\int_0^s P_{s-r}\Big\{
 &
 (P_r A + \CP_r\star \CN^c A +R_r)\partial (P_r A + \CP_r\star \CN^c A+ R_r)
 \\
 &+( \CF_r A)^3
  \Big\} \mrd r
  - \CP_s \star \CN^c A
\;.
\end{equs}
Note that $\CN^c A$ is a linear function of $\CN A$.
Therefore, by \eqref{e:CPA} and \eqref{eq:PCN_bound},
\begin{equs}[e:PN-bound]
| \CP_s\star \CN^c A|_{\infty}
&\lesssim
\int_0^s |P_{s-r}(\CN_r A)|_{\infty}\mrd r
\lesssim
 \int_0^s (s-r)^{\frac{\gamma-1}{2}} 
|\CN_r A|_{\CC^{\gamma-1}} \mrd s 
 \\
 &\lesssim
 \int_0^s (s-r)^{\frac{\gamma-1}{2}} 
|\CN_r A|_{\gr\gamma} \mrd s 
 \\
& \lesssim
 \int_0^s (s-r)^{\frac{\gamma-1}{2}} 
 r^{-\delta}\quadnorm{A}_{\gamma,\delta}  \mrd s
  \asymp
  s^{\mu/2} \quadnorm{A}_{\gamma,\delta}\;,
\end{equs}
and similarly
\begin{equ}
| \d \CP_s\star \CN^c A|_{\infty}
\lesssim
  s^{\mu/2-\frac12} \quadnorm{A}_{\gamma,\delta}\;.
\end{equ}
Moreover, by Lemma \ref{lem:CF_est}, $|(\CF_rA)^3|_\infty \lesssim r^{3\eta/2}\SNorm(A)^3$.
Therefore, since
\begin{equ}
\CP_s\star \CN^c A=\int_0^s P_{s-r} (P_r A \partial P_r A) \mrd r
\;,
\end{equ}
the terms on the right-hand side of \eqref{e:fixpt-R} not involving $R$ are bounded in $L^\infty$ by a multiple of $\SNorm(A)+\SNorm(A)^3$ times
\begin{equ}
\int_0^s \{r^{\frac\eta2 + \frac{\mu}2 -\frac12} + r^{\mu-\frac12} + r^{3\eta/2}\} \mrd r
\asymp 
s^{\frac\eta2 + \frac{\mu}2 + \frac12} + s^{\mu + \frac12} +s^{1+3\eta/2}\;,
\end{equ}
where we used that $\frac\eta2 + \frac{\mu}2 + \frac12>0$ and $\mu+\frac12>0$ and $1+3\eta /2>0$ due to condition \eqref{eq:init}.

Consider the Banach space $\CB \equiv \CB_T$ with norm
\begin{equ}
|R|_{\CB} \eqdef \sup_{s\in (0,T)} \{s^{-\nu}|R_s|_\infty + s^{-\nu+\frac12}|\d R_s|_\infty\}\;.
\end{equ}
By the upper bound~\eqref{e:cond-nu} on $\nu$, the terms on the right-hand side of \eqref{e:fixpt-R} not involving $R$ are in a ball of radius $\asymp \SNorm(A)+\SNorm(A)^3$ in $\CB$.
It readily follows that
\begin{equs}
s^{-\nu}|R|_\infty
&\lesssim 
\SNorm(A)+\SNorm(A)^3 + s^{\kappa'}|R|_{\CB}(\SNorm(A)+\SNorm(A)^2)
\\
&\qquad + s^{\kappa''} |R|_{\CB}^2(1+\SNorm(A)) + s^{1+2\nu}|R|_{\CB}^3
\end{equs}
where
\begin{equs}
\kappa' &= \min\Big\{\frac{\eta }{2} +\frac12\;,\;
1+\eta
\Big\}
= \frac{\eta }{2} +\frac12\;,
\\
\kappa'' &= \min\Big\{
\frac12+\nu\;,\;
1+ \frac{\eta }{2}+\nu
\Big\}
= \frac12+\nu\;.
\end{equs}
The same bound holds for $s^{-\nu+\frac12}|\partial R|_\infty$,
so in conclusion
\begin{equ}
|R|_{\CB} \lesssim \SNorm(A)+\SNorm(A)^3 + T^\kappa (|R|_{\CB} + |R|_{\CB}^3)(1+\SNorm(A)^2)
\end{equ}
for some $\kappa>0$.
It follows by a continuity argument (in $T$), that for all $T \ll 1\wedge \Poly(\SNorm(A)^{-1})$,
one has $|R|_{\CB} \lesssim \SNorm(A)+\SNorm(A)^3$.
\end{proof}

\begin{lemma}\label{lem:Euler_F}
Suppose further that $\nu\leq 1-\delta$. Then for all $s\ll 1\wedge\Poly(\SNorm(A)^{-1})$
\begin{equ}[eq:CFa_CPa]
\CF_s A = P_s A + O_{\Omega_{\gr\gamma}[\mfg,\mfg]}(s^{\nu}(\SNorm(A) + \SNorm(A)^3))\;, \qquad P_s A = O_{\Omega_{\gr\gamma}}(s^{\lambda})\;.
\end{equ}
Suppose further that $\tilde A = A + r$ with $|r|_\infty = O(1)$.
Then for all $s \ll 1\wedge \Poly(\SNorm(A)^{-1})$
\begin{equ}[eq:CFa_tilde_a]
\CF_s \tilde A = \CF_s A + P_s r + O_{L^\infty[\mfg,\mfg]}(s^{\eta/2+\frac12}|r|_\infty)\;.
\end{equ}
The proportionality constant in each statement depends only on the Greeks.
\end{lemma}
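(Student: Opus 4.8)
\emph{Overall plan and the first display.}
The plan is to derive both displays from the already-proven Lemmas~\ref{lem:Fs_remainder} and~\ref{lem:CF_est} together with the fact that the nonlinearity of \eqref{eq:YMH_flow} consists only of commutators. For the first identity I would simply measure the decomposition of Lemma~\ref{lem:Fs_remainder}, namely $\CF_s A - P_s A = \CP_s\star\CN A + R_s A$, in the $|\cdot|_{\gr\gamma}$ norm. The term $\CP_s\star\CN A$ is controlled by \eqref{eq:PCN_bound}, which gives $|\CP_s\star\CN A|_{\gr\gamma}\lesssim s^{1-\delta}\quadnorm{A}_{\gamma,\delta}\leq s^{\nu}\SNorm(A)$ once we invoke the hypotheses $\nu\leq 1-\delta$ and $s<1$. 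For $R_s A$ I would use the elementary bound $|f|_{\gr\gamma}\leq|f|_\infty$ (valid because for $\ell=(y,v)\in\CL$ one has $|v|\leq\tfrac14\leq1$ and $\gamma\leq1$, so $|\int_\ell f|\leq|v|\,|f|_\infty\leq|v|^{\gamma}|f|_\infty$), whence $|R_s A|_{\gr\gamma}\leq|R_s A|_\infty\lesssim s^{\nu}(\SNorm(A)+\SNorm(A)^3)$ by Lemma~\ref{lem:Fs_remainder}. That all the relevant quantities take values in $[\mfg,\mfg]$ is because, in Duhamel form, $\CF_s A - P_s A=\int_0^s P_{s-\sigma}(\text{nonlinearity of }\eqref{eq:YMH_flow})\,\mrd\sigma$ and each nonlinear term there is a commutator $[A_j,\cdot]$; this is preserved by $P_{s-\sigma}$, by integration in $\sigma$, and hence (by the uniqueness of the splitting) by each of $\CP_s\star\CN A$ and $R_s A$. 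Finally $P_s A = O_{\Omega_{\gr\gamma}}(s^{\lambda})$ is exactly \eqref{eq:Psa}.

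\emph{Setup for the second display.}
For \eqref{eq:CFa_tilde_a} the plan is a short-time perturbative (Duhamel) argument for the difference. Writing $w_\cdot=\CF_\cdot A$ and $u_\cdot=\CF_\cdot\tilde A$ (well-defined for small $s$ since $\tilde A$ differs from $A\in\SState$ by an $L^\infty$ function, so the DeTurck--YM flow still starts from a space in which it is locally well posed), I would set
\begin{equ}
v_s \eqdef u_s - w_s - P_s r = (\CF_s\tilde A - P_s\tilde A) - (\CF_s A - P_s A)\;,
\end{equ}
using $P_s\tilde A - P_s A = P_s r$; in particular $v_s$ is $[\mfg,\mfg]$-valued. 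Since $u,w$ solve \eqref{eq:YMH_flow} we have $v_0=0$ and, with $Q(A)_i=[A_j,\,2\partial_j A_i-\partial_i A_j+[A_j,A_i]]$,
\begin{equ}
v_s = \int_0^s P_{s-\sigma}\big(Q(u_\sigma)-Q(w_\sigma)\big)\,\mrd\sigma\;.
\end{equ}
Expanding $Q(u_\sigma)-Q(w_\sigma)$ by multilinearity and substituting $u_\sigma-w_\sigma=v_\sigma+P_\sigma r$ (so $\partial u_\sigma=\partial w_\sigma+\partial v_\sigma+\partial P_\sigma r$), one rewrites the integrand as a finite sum of commutator-valued quadratic and cubic monomials whose factors lie in $\{v_\sigma,\partial v_\sigma,P_\sigma r,\partial P_\sigma r,w_\sigma,\partial w_\sigma\}$, each monomial containing at least one factor from $\{v_\sigma,\partial v_\sigma,P_\sigma r,\partial P_\sigma r\}$, the quadratic ones always carrying exactly one derivative.

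\emph{Closing the second display, and the main obstacle.}
I would then run a continuity argument for $v$ in the weighted norm $|v|_{\CB_T}\eqdef\sup_{\sigma\in(0,T)}\big(\sigma^{-\eta/2-1/2}|v_\sigma|_\infty+\sigma^{-\eta/2}|\partial v_\sigma|_\infty\big)$, using the a priori bounds $|w_\sigma|_\infty\lesssim\sigma^{\eta/2}\SNorm(A)$, $|\partial w_\sigma|_\infty\lesssim\sigma^{\eta/2-1/2}\SNorm(A)$ of Lemma~\ref{lem:CF_est}, together with $|P_\sigma r|_\infty\leq|r|_\infty$, $|\partial P_\sigma r|_\infty\lesssim\sigma^{-1/2}|r|_\infty$, and the Schauder bounds $|P_{s-\sigma}g|_\infty\lesssim|g|_\infty$, $|\partial P_{s-\sigma}g|_\infty\lesssim(s-\sigma)^{-1/2}|g|_\infty$. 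The monomials with no $v$-factor are the ones producing the claimed power: $P_\sigma r\,\partial w_\sigma$ and $w_\sigma\,\partial P_\sigma r$ are each $\lesssim|r|_\infty\SNorm(A)\sigma^{\eta/2-1/2}$, so after $\int_0^s P_{s-\sigma}(\cdot)$ this is $\lesssim|r|_\infty\SNorm(A)\,s^{\eta/2+1/2}$ (the integral converging since $\eta>-1$ by \eqref{eq:init}), while every remaining source monomial (e.g. $P_\sigma r\,\partial P_\sigma r$, or the cubic ones, after invoking $|r|_\infty=O(1)$) yields a strictly higher power of $s$. Every monomial containing a $v$-factor contributes $|v|_{\CB_T}$ (or $|v|_{\CB_T}^2$, $|v|_{\CB_T}^3$) times a monomial in $\SNorm(A),|r|_\infty$ times a strictly positive power of $s$ (again by \eqref{eq:init}: e.g. $w_\sigma\,\partial v_\sigma$ contributes $\lesssim\SNorm(A)|v|_{\CB_T}s^{1+\eta}$, $P_\sigma r\,\partial v_\sigma$ contributes $\lesssim|r|_\infty|v|_{\CB_T}s^{\eta/2+1}$, $v_\sigma\,\partial v_\sigma$ contributes $\lesssim|v|_{\CB_T}^2 s^{\eta+3/2}$). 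This gives $|v|_{\CB_T}\lesssim\SNorm(A)|r|_\infty+T^{\kappa}\Poly(\SNorm(A)+|r|_\infty)\big(|v|_{\CB_T}+|v|_{\CB_T}^3\big)$ for some $\kappa>0$, and a continuity argument in $T$ absorbs the last term once $T\ll1\wedge\Poly(\SNorm(A)^{-1})$, yielding $|v|_{\CB_T}\lesssim|r|_\infty$ (the implicit constant depending on the Greeks and on the a priori size of $\SNorm(A)$, consistently with the conventions of Lemmas~\ref{lem:CF_est}--\ref{lem:Fs_remainder}), hence $|v_s|_\infty\lesssim s^{\eta/2+1/2}|r|_\infty$, which is \eqref{eq:CFa_tilde_a}. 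The hard part is precisely this bookkeeping: one must check that \emph{every} $v$-dependent monomial carries a positive power of $s$ while \emph{every} source monomial is $O(s^{\eta/2+1/2})$ (after using $|r|_\infty=O(1)$), which is exactly where the numerical relations in \eqref{eq:init}---in particular $\eta\in(-2/3,0)$, so that every parabolic time-integral $\int_0^s(s-\sigma)^{-1/2}\sigma^{a}\,\mrd\sigma$ that appears converges and lands on the correct side of $\eta/2+1/2$---are used; no singular-analysis input beyond Lemmas~\ref{lem:CF_est} and~\ref{lem:Fs_remainder} is needed.
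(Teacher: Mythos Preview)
Your proposal is correct and follows essentially the same strategy as the paper. For \eqref{eq:CFa_CPa} the arguments are identical. For \eqref{eq:CFa_tilde_a} there is only a minor organizational difference: the paper works with $Q_s\eqdef\CF_s\tilde A-\CF_s A$ in the simpler norm $\sup_u|Q_u|_\infty+\sup_u u^{1/2}|\partial Q_u|_\infty$, first runs the continuity argument to get $|Q|\lesssim|r|_\infty$, and then re-inserts this into the integral equation to read off the $s^{\eta/2+1/2}$ rate; you instead subtract $P_s r$ at the outset and run the continuity argument directly in a weighted norm adapted to the target power. Both routes use the same ingredients (Lemma~\ref{lem:CF_est}, Duhamel, and the exponent conditions in \eqref{eq:init}) and yield the same conclusion.
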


\begin{proof}
The second claim of  \eqref{eq:CFa_CPa} follows from \eqref{eq:Psa}.
The first claim of  \eqref{eq:CFa_CPa} follows from
\eqref{eq:PCN_bound} (and the assumption  $\nu\leq 1-\delta$), Lemma \ref{lem:Fs_remainder}
and $|\cdot |_{\gr\gamma}\le | \cdot |_{\infty}$ for $\gamma\le 1$.

For \eqref{eq:CFa_tilde_a}, writing $\CF_s \tilde A = \CF_s A + Q_s$, we have 
\begin{equ}
\CF_s A + Q_s = P_s (A+r) + \int_0^s P_{s-u} \{ (\CF_u A + Q_u)\d(\CF_u A + Q_u)+(\CF_u A + Q_u)^3 \} \mrd u
\end{equ}
which implies that
\begin{equs}[eq:Q_int_form]
Q_s
&=
P_s r + \int_0^s P_{s-u}\Big(Q_u\d \CF_u A + (\CF_u A) \d Q_u + Q_u\d Q_u
\\
&\qquad \qquad+ (\CF_u A)^2 Q_u + (\CF_u A) Q_u^2 + Q_u^3 \Big)\mrd u\;.
\end{equs}
Recalling that, by Lemma \ref{lem:CF_est}, $|\CF_u A|_\infty \lesssim u^{\eta/2}\SNorm(A)$
and $|\CF_u A|_{\CC^1} \lesssim u^{\eta/2-\frac12}\SNorm(A)$
for $u\leq 1\wedge \Poly(\SNorm(A)^{-1})$, the integral in \eqref{eq:Q_int_form} is bounded in $L^\infty$ by a multiple of
\begin{equs}[eq:Q_int_bound]{}
&\int_0^s \{ |Q_u|_\infty u^{\eta/2-\frac12} + u^{\eta/2} |Q_u|_{\CC^1} + |Q_u|_\infty|Q_u|_{\CC^1}
\\
&\qquad + u^{\eta}|Q_u|_\infty + u^{\eta/2}|Q_u|^2_\infty + |Q_u|^3_\infty \}\mrd u
\\
&\lesssim
|Q|_\infty s^{\eta/2+\frac12} + s^{\eta/2+\frac12}|Q|_{L^\infty_{1/2} \CC^1} + s^{1/2}|Q|_\infty
|Q|_{L^\infty_{1/2} \CC^1}
\\
&\qquad
+s^{\eta+1}|Q|_\infty + s^{\eta/2+1}|Q|_\infty^2 + s |Q|_\infty^3\;,
\end{equs}
where we denote $|Q|_{L^\infty_{1/2} \CC^1} = \sup_{u\in (0,s)} u^{1/2}|Q_u|_{\CC^1}$
and the proportionality constant is polynomial in $\SNorm(A)$.
Furthermore, the integral in \eqref{eq:Q_int_form} is bounded in $\CC^1$ by the same quantity times $s^{-1/2}$.
Since each exponent of $s$ in \eqref{eq:Q_int_bound} is positive, it follows from a continuity argument (in $s$) that, for $s\ll 1\wedge \Poly(\SNorm(A)^{-1})$,
\begin{equ}
|Q|_\infty + |Q|_{L^\infty_{1/2} \CC^1} \lesssim |r|_\infty
\end{equ}
where the proportionality constant depends only on the Greeks.
Therefore, since the smallest exponent of $s$ on the right-hand side of \eqref{eq:Q_int_bound} is $\eta/2+\frac12$ and since we assumed $|r|_\infty = O(1)$,
\begin{equ}
Q_s = P_s r + O_{L^\infty}(s^{\eta/2+\frac12}|r|_\infty ) \;.
\end{equ}
It is furthermore clear that the final term takes values in $[\mfg,\mfg]$.
\end{proof}

\section{Regularised Wilson loops}
\label{sec:Wilson}

We fix in this section $\omega,\kappa$ as in Section \ref{sec:SPDE}
and also $\alpha,\theta,\gamma,\delta$ satisfying \eqref{eq:init} which also determine  $\eta,\mu$.
We furthermore fix a model and input distributions as in Section \ref{sec:SPDE}.

Suppose $\tilde \CX$ is given by \eqref{eq:X-formal} with initial condition $\tilde X(0)=( A(0),\Phi(0))$ and $h(0)$ is smooth.
Likewise let $\CX$ be given by \eqref{eq:X-formal} with the same initial condition $X(0) = \tilde X(0)$ but with $h(0)=0$.
We follow Notation \ref{not:t},
in particular we let $t \in (0,\tau)$ where $\tau^{-1}$ and all proportionality constants are polynomial in the quantity \eqref{eq:size}.

\begin{lemma}\label{lem:A_tilde_A} 
\begin{equs}
\tilde \CX
= B + c \bar h + O_{\cD^{\frac32+2\kappa,\omega}_{0}}(t^{3/2-\kappa_5})
\end{equs}
where $\bar h (t) = t P_t h(0)$ and 
where $B$ does not depend on $h(0)$
and $\kappa_5= \frac12 (\omega+\frac12) + \frac72 \kappa$ as in Proposition \ref{prop:SPDE-Euler}.
Likewise for $\CX$ except we take $h(0)=0$.
\end{lemma}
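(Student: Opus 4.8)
The plan is to deduce the lemma directly from Proposition~\ref{prop:SPDE-Euler} applied at expansion level $n=5$. By the definition \eqref{eq:def-rn} with $n=5$ one has the exact identity
\begin{equ}
\tilde\CX = \sum_{i=0}^{5} B_i + c\,\CP h_0 + r_5\;,
\end{equ}
and Proposition~\ref{prop:SPDE-Euler} bounds the remainder by $r_5 = O_{\cD^{\frac32+2\kappa,\omega}_{0}}\big(t^{(5+1)/4-\kappa_5}\big) = O_{\cD^{\frac32+2\kappa,\omega}_{0}}\big(t^{3/2-\kappa_5}\big)$ with $\kappa_5 = \tfrac12(\omega+\tfrac12)+\tfrac72\kappa$, which is exactly the claimed error term. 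It thus remains to set $B \eqdef \sum_{i=0}^5 B_i$ and to check the two structural assertions: that $B$ does not depend on $h(0)$, and that $c\,\CP h_0 = c\,\bar h$.

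For the first, I would argue by induction along the recursion \eqref{e:Bn-iter}: the base term $B_0 = PX(0)+\bPsi$ involves only the initial datum $X(0)$ and the noise, and for $1\le n\le 5$ the expression \eqref{e:Bn-iter} builds $B_n$ solely from $B_0,\dots,B_{n-1}$ (and the fixed operator $\mathring C$); hence each $B_i$ with $0\le i\le 5$, and so $B$, is independent of $h(0)$. The statement for $\CX$ is then immediate: taking $h(0)=0$ only removes the term $c\,\CP h_0$ — equivalently sets $c\,\bar h=0$ — and leaves the $B_i$, hence $B$, unchanged.

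For the second, recall that $h_0 = Ph(0)$ reconstructs to the smooth space-time function $(t,x)\mapsto (P_t h(0))(x)$, and that the integration operator in \eqref{eq:def-rn} acts as $\CP\bone_+$, i.e. via the Duhamel formula with vanishing data at $t=0$. Using the semigroup identity $P_{t-s}P_s=P_t$,
\begin{equ}
\CR\big(\CP\bone_+ h_0\big)(t) = \int_0^t P_{t-s}P_s h(0)\,\mrd s = t\,P_t h(0) = \bar h(t)\;;
\end{equ}
since $h(0)$ is smooth, $h_0$ and $\bar h$ both lie in the polynomial sector and this identity lifts to modelled distributions, giving $\CP h_0 = \bar h$. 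Combining the three observations yields the lemma.

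This is essentially bookkeeping around Proposition~\ref{prop:SPDE-Euler}, so I do not expect a serious obstacle; the one point worth noting is that level $n=5$ is precisely what is needed, since it is the smallest level for which the isolated linear term $c\,\bar h$ is of strictly larger order than the remainder $r_5\asymp t^{3/2-\kappa_5}$ — a separation required by the lower-bound argument of the following sections.
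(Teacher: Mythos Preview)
Your proposal is correct and follows essentially the same route as the paper: apply Proposition~\ref{prop:SPDE-Euler} (equivalently \eqref{eq:def-rn}) at level $n=5$, set $B=B_0+\cdots+B_5$, and compute $\CP Ph(0)(t)=\int_0^t P_{t-r}P_r h(0)\,\mrd r = tP_t h(0)$. The paper's proof is just slightly terser, omitting the explicit inductive remark that the $B_i$ do not depend on $h(0)$.
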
 

\begin{proof}
We apply Proposition \ref{prop:SPDE-Euler} with $n=5$ to obtain
\begin{equ}
\tilde \mcX= B
 +c\,\CP Ph(0)
 + O_{\cD^{\frac32+2\kappa,\omega}_{0}}(t^{3/2-\kappa_5})\;,
\end{equ}
where $B=B_0 + \cdots + B_5$ and $(\CP Ph(0))(t) = \int_0^t P_{t-r} P_r h(0) \mrd r = t P_t h(0)$.
\end{proof}

\begin{remark}
In contrast to \cite[Prop. 8.8]{chevyrev2023invariant}, Lemma \ref{lem:A_tilde_A} does not have a non-explicit linear term (denote by $L_t h(0)$ in \cite{chevyrev2023invariant}).
This is because we use here $Ph(0)$ vs. $h(0)$ and
are only after an expansion to order $t^{3/2-\kappa_5}$ vs. $t^{2-\kappa}$ in \cite{chevyrev2023invariant}.
\end{remark}

Let $X = (A,\Phi)$ be the reconstruction of $\CX$ at time $t>0$,
and likewise for $\tilde X = (\tilde A,\tilde \Phi)$.
By Lemma~\ref{lem:A_tilde_A},
since the reconstruction of 
$O_{\cD^{\frac32+,\omega}_{0}}(t^{\ell})$
is $O_{L^\infty}(t^{\ell+\omega/2})$
and since $\frac32-\kappa_5 + \frac{\omega}2 = \frac54 - \frac72 \kappa$, one has
\begin{equ}
\tilde A = A + c t P_t h(0) + O_{L^\infty}(t^{5/4-7\kappa/2})\;.
\end{equ}

By~\eqref{eq:CFa_tilde_a} of Lemma~\ref{lem:Euler_F}, which is applicable because the final two terms are $O_{L^\infty}(1)$ for all $t \in (0,\tau)$ and $\tau$ as in Notation \ref{not:t},
\begin{equ}[e:CF-diff]
\CF_s \tilde A = \CF_s A
+ P_s \big(c t P_t h(0) + O_{L^\infty}(t^{5/4-7\kappa/2}) \big)
+ O_{L^\infty[\mfg,\mfg]}(s^{\eta/2+\frac12}t)\;.
\end{equ}

Recalling the Wilson loop $W_\ell = \Trace\hol (\cdot,\ell)$ from Definition~\ref{def:Wilson-loop},
our next goal is to compare the regularised Wilson loops $W_\ell (\CF_s \tilde A)$ and $W_\ell(\CF_s A)$.
Recall from  Definition~\ref{def:Wilson-loop} that the holonomy is (the endpoint of) a $G$-valued curve given as the solution of a linear ODE driven by a $\mfg$-valued curve.
We first analyse the change of the holonomy under a generic perturbation of a $\mfg$-valued curve.

For $p\geq 1$ and a normed space $F$, let $\CC^{\var p}([0,1],F)$ denote the space of continuous paths $f\in\CC([0,1],F)$ with finite $p$-variation
\begin{equ}
|f|_{\var p} \eqdef \sup_{P\subset [0,1]} \Big(\sum_{[s,t]\in P} |f(t) - f(s)|^p\Big)^{1/p}
\end{equ}
where the $\sup$ is over all partitions $P$ of $[0,1]$ into disjoint (modulo endpoints) interval.
For $p\in[1,2)$ and $\bgamma\in\CC^{\var p}([0,1],\mfg)$,
let $J^{\bgamma} \in \CC^{\var p} ([0,1],G)$ denote the solution to the linear ODE
\begin{equ}\label{eq:V_ODE}
\mrd J^{\bgamma}(x) = J^{\bgamma}(x)\mrd \bgamma(x)\;,\quad J^{\bgamma}(0)=\id\;,
\end{equ}
which is well-posed as a Young ODE~\cite{Lyons94}.
One has the following perturbation estimate.

\begin{lemma}\label{lem:path_perturb}
For $\bgamma,\bzeta\in \CC^{\var p}([0,1],\mfg)$ and $L\geq 4$
we have
\begin{equs}
{}&
J^{\bgamma+\bzeta}(1) = J^{\bgamma}(1) + \int_0^1 \mrd \bzeta(x) 
+
\int_0^1\int_0^x \{\mrd\bzeta(x)\mrd\bgamma(y)+ \mrd\bgamma(x)\mrd\bzeta(y)\}
\\
&\quad
+ O\{v(w^2+w^{L-1}) + w^L+w^{L+1} + v^{L+1} + v^2 (1+w + v + w^{L-3})\}\;.
\end{equs}
Here $v\eqdef |\bzeta|_{\var p}$, $w\eqdef |\bgamma|_{\var p}$, 
and the proportionality constants depend only on $p$ and $L$.
\end{lemma}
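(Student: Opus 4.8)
The plan is to expand the flow of the linear Young ODE \eqref{eq:V_ODE} as a path-ordered exponential (Chen series) and to reorganise this expansion according to the number of $\bzeta$-increments. Since $p<2$, one has the convergent expansion $J^{\bgamma}(1)=\sum_{n\ge0}\int_{0<x_1<\cdots<x_n<1}\mrd\bgamma(x_1)\cdots\mrd\bgamma(x_n)$, with $n$-th term bounded by $\lesssim w^n/\Gamma(n/p+1)$ by the classical estimates for iterated Young integrals \cite{Lyons94}; since $|\bgamma+\bzeta|_{\var p}\le w+v$, the same expansion holds with $\bgamma$ replaced by $\bgamma+\bzeta$. First I would substitute $\mrd(\bgamma+\bzeta)=\mrd\bgamma+\mrd\bzeta$ into each factor and collect the resulting iterated integrals according to how many of their increments are $\mrd\bzeta$. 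The contributions with no $\mrd\bzeta$ sum precisely to $J^{\bgamma}(1)$. Among the contributions with exactly one $\mrd\bzeta$, the three of lowest degree — coming from $\int_0^1\mrd\bzeta$ and from the two mixed double integrals $\int_{0<y<x<1}\mrd\bgamma(y)\mrd\bzeta(x)$ and $\int_{0<y<x<1}\mrd\bzeta(y)\mrd\bgamma(x)$ — are exactly the three explicit terms in the statement; every remaining one-$\bzeta$ integral has degree $\ge3$. Everything else (those higher one-$\bzeta$ integrals, together with all integrals containing at least two $\mrd\bzeta$'s) is placed into the error.

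For the error I would use that, again by \cite{Lyons94} (e.g. by treating $(\bgamma,\bzeta)$ as a single $\mfg\oplus\mfg$-valued path and optimising over a rescaling of the $\bzeta$-component), the sum of the absolute values of all iterated integrals built from $a$ factors $\mrd\bgamma$ and $b$ factors $\mrd\bzeta$ is $\lesssim \binom{a+b}{b}\,w^a v^b/\Gamma((a+b)/p+1)$. Summing this over $b=1$, $a+b\ge3$ gives the one-$\bzeta$ remainder, and over $b\ge2$, $a\ge0$ gives the rest; both series converge. Truncating each at total degree $L$, the part of degree $<L$ is a finite sum of monomials $w^av^b$ with $b\ge1$, $a+b\ge3$ (respectively $b\ge2$), and using the elementary bound $x^j\le x^{j_0}+x^{j_1}$ valid for $x\ge0$ and $j_0\le j\le j_1$ one rewrites each such monomial in terms of the monomials $w^2$, $vw^2$, $w^L$, $w^{L+1}$, $v^2$, $v^2w$, $v^2w^{L-3}$, $v^3$, $v^{L+1}$ appearing in the statement (the hypothesis $L\ge4$ is what makes these ranges nonempty and correctly nested); the degree-$\ge L$ tail is absorbed the same way, using the factorial decay to dominate it by a constant times $w^L+w^{L+1}+v^{L+1}$. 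Collecting all of this produces the claimed identity with the stated $O\{\cdots\}$, the constant depending only on $p$ and $L$.

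The only real content here is the combinatorial bookkeeping: there is no conceptual obstacle once the factorial estimates for iterated Young integrals are in hand, and indeed the hypothesis $p<2$ enters exactly to guarantee that all these iterated integrals are well-defined and that the Chen series converges. The step I expect to be most delicate is packaging the remainder into precisely the displayed polynomial — organising the two convergent remainder series, truncating at level $L$, and carrying out the convexity estimates so that the final bound involves only those few monomials. An essentially equivalent route, which I would fall back on if the Chen-series bookkeeping becomes unwieldy, is to set $D:=J^{\bgamma+\bzeta}-J^{\bgamma}$, observe that $D$ solves the inhomogeneous linear Young ODE $\mrd D=D\,\mrd(\bgamma+\bzeta)+J^{\bgamma}\,\mrd\bzeta$ with $D(0)=0$, solve it by variation of constants to obtain $D(1)=\big(\int_0^1 J^{\bgamma}(y)\,\mrd\bzeta(y)\,J^{\bgamma+\bzeta}(y)^{-1}\big)J^{\bgamma+\bzeta}(1)$, and then expand the three factors to low order; this reproduces the same explicit terms and the same type of error bound.
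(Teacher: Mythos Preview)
Your overall strategy---expand both holonomies as Chen series and regroup according to the number of $\mrd\bzeta$-factors---coincides with the paper's, and your identification of the three explicit terms is correct. The gap is in your treatment of the degree-$\ge L$ tail. Factorial decay of iterated Young integrals gives \emph{convergence} of the series, but it does not give a tail bound of the form $C(p,L)(w^L+w^{L+1}+v^{L+1})$ uniformly in $w,v$: already the one-$\bzeta$ contribution forces you to sum $\sum_{n\ge L} n\,w^{n-1}v/\Gamma(n/p+1)$, which for large $w$ behaves like $v\,e^{Cw^p}$ and is not dominated by any fixed polynomial in $w$. Since the lemma is stated with proportionality constants depending only on $p$ and $L$, this is a genuine failure, not a cosmetic one.

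The paper avoids this by truncating \emph{before} summing: it writes $J^{\bgamma}(1)=\id+I^{\bgamma}+R^{\bgamma}$ with $I^{\bgamma}$ the finite Chen polynomial of degree $\le L-1$ and $R^{\bgamma}$ the $L$-fold remainder integral $\int\cdots\int J^{\bgamma}\,\mrd\bgamma\cdots\mrd\bgamma$, and then invokes that $J^{\bgamma}$ takes values in the \emph{compact} group $G$. Compactness yields $|J^{\bgamma}|_\infty\le C_G$ and, via the ODE, $|J^{\bgamma}|_{\var p}\lesssim w$; iterating the Young estimate $|\int f\,\mrd g|_{\var p}\lesssim(|f|_\infty+|f|_{\var p})|g|_{\var p}$ then gives $|R^{\bgamma}|\lesssim w^L+w^{L+1}$, and likewise $|R^{\bgamma+\bzeta}|\lesssim(w+v)^L+(w+v)^{L+1}$. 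The comparison $I^{\bgamma+\bzeta}-I^{\bgamma}$ is now a \emph{finite} sum and is bounded monomially as you describe. Your fallback variation-of-constants route would also work, but for the same reason: bounding $J^{\bgamma}$, $J^{\bgamma+\bzeta}$ and their inverses in the formula $D(1)=\big(\int J^{\bgamma}\,\mrd\bzeta\,(J^{\bgamma+\bzeta})^{-1}\big)J^{\bgamma+\bzeta}(1)$ again requires compactness of $G$. In short, the missing ingredient in your argument is precisely the boundedness of the flow coming from compactness; once you insert it (most cleanly via the truncated-remainder formula), the rest of your bookkeeping goes through.
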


\begin{proof}
Denoting $I^{\bgamma}=\sum_{k=1}^{L-1} \int_0^1\ldots\int_0^{x_{k-1}} \mrd \bgamma(x_{k})\ldots \mrd \bgamma(x_1)$,
by linearity of the ODE~\eqref{eq:V_ODE}, for every $L\geq 1$ 
\begin{equ}
J^{\bgamma}(1) = \id
+ I^{\bgamma} +
\int_0^1\cdots\int_0^{x_{L-1}} J^{\bgamma}(x_{L})\mrd \bgamma(x_{L}) \mrd \bgamma(x_{L-1})\ldots \mrd \bgamma(x_1)\;.
\end{equ}
The proof is then similar with \cite[Lem.~8.15]{chevyrev2023invariant},
in particular since $J^{\bgamma}$ takes values in a compact set the last term above is bounded by $O(w^L + w^{L+1})$.
The only difference is that
when we compare $I^{\bgamma+\bzeta}$ with  $I^{\bgamma}$,
instead of keeping all the terms linear in $\bzeta$ (as denoted by
$P^{\bgamma}(\bzeta)$ therein), we now write 
\begin{equs}
I^{\bgamma+\bzeta} &= I^{\bgamma} + \int_0^1\mrd \bzeta(x) 
+\int_0^1\int_0^x \{\mrd\bzeta(x)\mrd\bgamma(y)+ \mrd\bgamma(x)\mrd\bzeta(y)\}
\\
&\qquad 
+O\{v^2+v(w^2+w^{L-1}) \}
+ O\{v^2 (w + v + w^{L-3} + v^{L-3})\}\;.
\end{equs}
Here $O\{v^2+v(w^2+w^{L-1}) \}$
is a term arising from $\int_0^1\int_0^x \mrd \bzeta(y)\mrd \bzeta(x) $ and
 the 
integrals with one instance of $\bzeta$ and $n$ instances of $\bgamma$ with
$2 \le n \le L-1$.
\end{proof}

Consider the loop $\ell\colon [0,1]\to\T^3$,  $\ell(x)=(x,0,0)$.
Recall that we fixed $\alpha,\theta,\gamma,\delta,\eta,\mu$ satisfying \eqref{eq:init}.
Recall $\lambda<0$ from \eqref{eq:lambda}. Let $\nu \in (0,1-\delta)$ satisfy \eqref{e:cond-nu}.
Recall that Notation \ref{not:t} is in place.

For a $\mfg$-valued $1$-form $a$, recall the line integral $\ell_a\colon [0,1]\to\mfg$ defined by \eqref{eq:ell_A}.
Denoting by $|\cdot|_{\Hol\gamma}$ is the usual H\"older norm,
it is obvious that
\begin{equ}[eq:ell_var_p]
|\ell_a|_{\var{\frac1\gamma}} \leq |\ell_a|_{\Hol{\gamma}} \leq |a|_{\gr{\gamma}}
\;.
\end{equ}
The following is the main result of this section.

\begin{lemma}\label{lem:Wilson_expansion}
For all $s\ll 1\wedge \Poly(\SNorm(A)^{-1})$ and $L \geq 4$,
  \begin{equs}
{}& W_\ell (\CF_s \tilde A)
= W_\ell(\CF_s A) + t\Trace \int_\ell c h(0) 
+ t \Trace \int_{[0,1]^2} \mrd \ell_{A(0)}(x_1) \mrd \ell_{c h(0)}(x_2)
\label{eq:reg_W_ell}
\\
& + O\Big(
t^{5/4-7\kappa/2} + ts + ts^\lambda \heatgr{\Psi_t^\ym}_{\alpha,\theta} + ts^\lambda \heatgr{\CP_t \star (\Psi \d \Psi)^\ym}_{\alpha,\theta} + |A(0)|_{L^\infty} s^{\eta/2+\frac12} t
\\
&\quad + s^\nu t(\SNorm(A)+\SNorm(A)^3) + t(u^2 + u^{L-1}) + s^{\nu L} + u^L + u^{L+1} + t^2 u \Big)\;,
\end{equs}
where $u= s^\lambda \heatgr{A}_{\alpha,\theta}$,
$\Psi = \CR\bPsi$ as in \eqref{eq:Psi},
and we recall that $Y^\ym$ is the $\mfg^3$-component of $Y\in \mfg^3\oplus\higgsvec$.
\end{lemma}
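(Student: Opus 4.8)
The plan is to realise both regularised Wilson loops as endpoints of a single Young differential equation and then apply the perturbation formula of Lemma~\ref{lem:path_perturb}, after first pinning down the leading behaviour of the two driving paths. Put $\bgamma\eqdef\ell_{\CF_s A}$ and $\bgamma+\bzeta\eqdef\ell_{\CF_s\tilde A}$, the $\mfg$-valued line integrals \eqref{eq:ell_A} along $\ell$ of the $\mfg^3$-components of $\CF_s A$ and $\CF_s\tilde A$; since $a\mapsto\ell_a$ is linear one has $\bzeta=\ell_{\CF_s\tilde A-\CF_s A}$, by \eqref{eq:ell_var_p} both paths lie in $\CC^{\var{\frac1\gamma}}([0,1],\mfg)$ with $p$-variation norm bounded by the corresponding $\gr\gamma$-norm, and $W_\ell(\CF_s A)=\Trace J^\bgamma(1)$, $W_\ell(\CF_s\tilde A)=\Trace J^{\bgamma+\bzeta}(1)$ by Definition~\ref{def:Wilson-loop} and \eqref{eq:V_ODE}.

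For $\bzeta$ I would insert \eqref{e:CF-diff}; using $P_sP_t=P_{s+t}$, $L^\infty$-contractivity of $P_s$, smoothness of $h(0)$ (to replace $P_{s+t}h(0)$ by $h(0)$ at cost $O(s+t)$) and $|\cdot|_{\gr\gamma}\le|\cdot|_\infty$, this yields
\begin{equ}
\bzeta=t\,\ell_{c\,h(0)}+O_{\CC^{\var{\frac1\gamma}}}\big(t^{5/4-7\kappa/2}+|A(0)|_{L^\infty}s^{\eta/2+1/2}t+ts\big)\;,
\end{equ}
the $|A(0)|_{L^\infty}s^{\eta/2+1/2}t$ contribution arising from bounding $|\CF_u A|_\infty\lesssim u^{\eta/2}|A(0)|_{L^\infty}$ (plus stochastic pieces, accounted for below) inside the proof of \eqref{eq:CFa_tilde_a}; in particular $v\eqdef|\bzeta|_{\var{\frac1\gamma}}\lesssim t$. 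For $\bgamma$ I would expand the reconstruction $A=A(0)+\Psi_t^\ym+(\CP_t\star(\Psi\d\Psi))^\ym+O_{L^\infty}(t^{1/4-3\kappa/2})$ via Lemma~\ref{lem:a_remainder}, then pass through $\CF_s$ using \eqref{eq:CFa_CPa} and \eqref{eq:Psa}, to obtain $\bgamma=\ell_{A(0)}+\bgamma_{\mathrm{err}}$ with $|\bgamma_{\mathrm{err}}|_{\var{\frac1\gamma}}\lesssim s+s^\lambda\heatgr{\Psi_t^\ym}_{\alpha,\theta}+s^\lambda\heatgr{\CP_t\star(\Psi\d\Psi)^\ym}_{\alpha,\theta}+s^\nu(\SNorm(A)+\SNorm(A)^3)+t^{1/4-3\kappa/2}$, and $w\eqdef|\bgamma|_{\var{\frac1\gamma}}\lesssim u+s^\nu(\SNorm(A)+\SNorm(A)^3)$ with $u=s^\lambda\heatgr{A}_{\alpha,\theta}$.

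Next I would apply Lemma~\ref{lem:path_perturb} with the given $L\ge4$. Its linear term $\int_0^1\mrd\bzeta=\bzeta(1)$ equals $t\,\ell_{c\,h(0)}(1)=t\int_\ell c\,h(0)$ up to the error in the display, giving the stated $t\,\Trace\int_\ell c\,h(0)$. In its quadratic term $\int_0^1\int_0^x\{\mrd\bzeta(x)\mrd\bgamma(y)+\mrd\bgamma(x)\mrd\bzeta(y)\}$ I would substitute the leading parts $\bzeta\approx t\ell_{c\,h(0)}$, $\bgamma\approx\ell_{A(0)}$; the main contribution is $t$ times the sum of the two ordered iterated integrals of $\ell_{A(0)}$ and $\ell_{c\,h(0)}$, which under the trace --- by cyclicity of $\Trace$ and integration by parts --- equals $t\,\Trace\big(\ell_{A(0)}(1)\,\ell_{c\,h(0)}(1)\big)=t\,\Trace\int_{[0,1]^2}\mrd\ell_{A(0)}(x_1)\mrd\ell_{c\,h(0)}(x_2)$, as in the statement. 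The errors from these substitutions are, by the standard Young estimate for iterated integrals of $\CC^{\var{\frac1\gamma}}$ paths, of order $v\,|\bgamma_{\mathrm{err}}|_{\var{\frac1\gamma}}+(w+|A(0)|_{L^\infty})\,|\bzeta-t\ell_{c\,h(0)}|_{\var{\frac1\gamma}}$, which upon inserting the two estimates above reproduces the listed terms $t^{5/4-7\kappa/2}$, $ts$, $ts^\lambda\heatgr{\Psi_t^\ym}_{\alpha,\theta}$, $ts^\lambda\heatgr{\CP_t\star(\Psi\d\Psi)^\ym}_{\alpha,\theta}$, $|A(0)|_{L^\infty}s^{\eta/2+1/2}t$ and $s^\nu t(\SNorm(A)+\SNorm(A)^3)$.

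Finally I would collect the explicit remainder of Lemma~\ref{lem:path_perturb}: with $v\asymp t$ and $w\lesssim u+1$ it is $\lesssim t(u^2+u^{L-1})+u^L+u^{L+1}+t^{L+1}+t^2(1+u+t+u^{L-3})$, and using $t<1$ (so $t^{L+1},t^2,t^3\lesssim t^{5/4}$) and $u^{L-3}\lesssim u^2+u^{L-1}$, this is dominated by $t(u^2+u^{L-1})+s^{\nu L}+u^L+u^{L+1}+t^2u+t^{5/4-7\kappa/2}$, the $s^{\nu L}$ term soaking up the gap between $w$ and $u$; this exhausts the remaining error in the statement. The step I expect to be the main obstacle is the bookkeeping of $\bgamma_{\mathrm{err}}$: the rough stochastic inputs $\Psi_t^\ym$ and $\CP_t\star(\Psi\d\Psi)^\ym$ must be carried in exactly the $\gr\gamma$-norm of their $P_s$-regularisations --- which is where \eqref{eq:Psa} and the $\heatgr{\cdot}_{\alpha,\theta}$-norms enter --- so that after multiplication by the $O(t)$-sized $\bzeta$ inside the iterated integrals the contributions all land among the listed errors, and one must simultaneously track the explicit $|A(0)|_{L^\infty}$-dependence since it is used downstream.
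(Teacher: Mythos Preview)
Your overall strategy --- set $\bgamma=\ell_{\CF_s A}$, $\bzeta=\ell_{\CF_s\tilde A-\CF_s A}$, apply Lemma~\ref{lem:path_perturb}, then identify the linear and quadratic terms via cyclicity of trace --- is exactly the paper's approach. However, there is one genuine gap in your bookkeeping of $\bzeta$.

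You claim
\[
\bzeta = t\,\ell_{c\,h(0)} + O_{\CC^{\var{1/\gamma}}}\big(t^{5/4-7\kappa/2} + |A(0)|_{L^\infty}s^{\eta/2+1/2}t + ts\big)\;,
\]
justifying the middle term by ``bounding $|\CF_u A|_\infty\lesssim u^{\eta/2}|A(0)|_{L^\infty}$''. This bound is not available: Lemma~\ref{lem:CF_est} only gives $|\CF_u A|_\infty\lesssim u^{\eta/2}\SNorm(A)$, and accordingly the error term in \eqref{e:CF-diff} is $O_{L^\infty[\mfg,\mfg]}(s^{\eta/2+1/2}t)$ with \emph{no} $|A(0)|_{L^\infty}$ prefactor. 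With only this coarser bound on $\bzeta$, your linear term would produce an error $O(s^{\eta/2+1/2}t)$, which is \emph{not} among the stated errors and is genuinely too large for the downstream application in Lemma~\ref{lem:E_W_ell} (there $s=t^\beta$ with $\beta/6<r$, so $s^{\eta/2+1/2}t\lesssim t^{1+\beta/6}$ is not $o(t^{1+r})$).

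The missing ingredient is algebraic: the $s^{\eta/2+1/2}t$ error in \eqref{e:CF-diff} takes values in $[\mfg,\mfg]$, and $\Trace[\mfg,\mfg]=\{0\}$. This kills that error in the \emph{linear} term $\Trace\int_0^1\mrd\bzeta$ outright, leaving only $O(t^{5/4-7\kappa/2}+ts)$. In the \emph{quadratic} term, after first converting $\Trace\int_0^1\int_0^x\{\mrd\bzeta\mrd\bgamma+\mrd\bgamma\mrd\bzeta\}=\Trace\big(\int_0^1\mrd\bzeta\cdot\int_0^1\mrd\bgamma\big)$ by cyclicity, the full $s^{\eta/2+1/2}t$ error in $\int_0^1\mrd\bzeta$ is multiplied by $\int_0^1\mrd\bgamma$, whose leading piece is $\int_0^1\mrd\ell_{A(0)}=O(|A(0)|_{L^\infty})$ --- and \emph{that} is where the $|A(0)|_{L^\infty}s^{\eta/2+1/2}t$ error in the statement actually originates. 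Once you insert this observation, the rest of your argument goes through as written.
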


We remark that although $\lambda<0$, in Section~\ref{sec:expectation-loop} we will
choose $A(0)$ (and thus $\heatgr{A}_{\alpha,\theta}$) sufficiently small, so that $u$ will be small.

\begin{proof}
Define the $\mfg$-valued curve
\[
\bgamma(x) = \int_0^x (\CF_s A)_1(y,0,0)\mrd y = \ell_{\CF_s A}(x)\;.
\]
By \eqref{e:CF-diff}, $ \ell_{\CF_s \tilde A}(x) = \bgamma(x)+\bzeta(x)$ with
 $\bzeta = \ell_{D_h+D_{\err}}$ where
\begin{equs}[eq:Ds]
D_h & \eqdef ctP_{t+s}h(0) = cth(0) + O_{L^\infty}(t(t+s)) \;, 
\\
D_{\err}
&\eqdef O_{L^\infty[\mfg,\mfg]}(s^{\eta/2+\frac12}t) + O_{L^\infty}(t^{5/4-7\kappa/2}) \;.
\end{equs}
By definition one has
\[
W_\ell (\CF_s A) =  \Trace J^{\bgamma}(1)\;,
\qquad
W_\ell (\CF_s \tilde A) = \Trace J^{\bgamma+\bzeta}(1)\;.
\]
By Lemma~\ref{lem:path_perturb}, and taking trace, for $p=1/\gamma \in [1,2)$ (where $\gamma$ is the exponent in Section~\ref{sec:Fs}, in particular \eqref{eq:init})
and any $L\geq 4$,
\begin{equs}[eq:error_W]
{}& W_\ell (\CF_s \tilde A)
= W_\ell(\CF_s A) 
+ \Trace  \Big(\int_0^1 \mrd\bzeta(x) \Big)
\\
&\quad +\Trace \int_0^1\int_0^x \{\mrd\bzeta(x)\mrd\bgamma(y)+ \mrd\bgamma(x)\mrd\bzeta(y)\}
\\
&\quad
+O\{v(w^2+w^{L-1}) + w^L+w^{L+1} + v^{L+1} + v^2 (1+w + v + w^{L-3})\}
\end{equs}
where $v= |\bzeta|_{\var{\frac1\gamma}}$ and $w= |\bgamma|_{\var{\frac1\gamma}}$.
We now match the right-hand side of \eqref{eq:error_W} with that of the lemma statement.

First, using that $\Trace [\mfg,\mfg] = \{0\}$, we have
\begin{equ}
\Trace  \Big(\int_0^1 \mrd\bzeta(x) \Big)
= t\Trace\int_\ell c h(0) + O(t^{5/4-7\kappa/2}+ts)
\;,
\end{equ}
which gives the second term and the $O(t^{5/4-7\kappa/2}) + O(ts)$ terms on the right-hand side of \eqref{eq:reg_W_ell}.

Next, by the cyclic property of trace
\begin{equ}
\Trace \int_0^1\int_0^x \{\mrd\bzeta(x)\mrd\bgamma(y)+ \mrd\bgamma(x)\mrd\bzeta(y)\}
= 
\Trace \int_{[0,1]^2} \{\mrd\bzeta(x)\mrd\bgamma(y)\}\;.
\end{equ}
Consider the right-hand side. By Lemma \ref{lem:Euler_F},
\begin{equ}
\CF_s A = P_s A + O_{\gr\gamma}(s^\nu(\SNorm(A)+\SNorm(A)^3))
\end{equ}
and, by Lemma \ref{lem:a_remainder},
\begin{equ}[eq:a_expansion]
A = A(0) + \Psi^\ym_t + \CP_t \star (\Psi \d \Psi)^\ym + O_{L^\infty}(t^{1/4-3\kappa/2}) \;.
\end{equ}
Therefore, since $P_s A(0) = A(0) + O_{L^\infty}(s)$ and $P_s$ is a contraction on $\Omega_{\gr\gamma}$
and $|P_s\Psi^\ym_t|_{\gr\gamma} \lesssim s^\lambda \heatgr{\Psi^\ym_t}_{\alpha,\theta}$ by~\eqref{eq:Psa} where $\lambda<0$ is as in \eqref{eq:lambda}, and likewise for $\CP_t \star (\Psi \d \Psi)^\ym$,
we have
\begin{equs}
\int_0^1 \mrd\bgamma(x) &= \int_0^1\mrd \ell_{A(0)}(x) +
 O\left(s^\lambda\heatgr{\Psi^\ym_t}_{\alpha,\theta} + s^\lambda\heatgr{\CP_t \star (\Psi \d \Psi)^\ym}_{\alpha,\theta}\right)
\\
&\quad +
O\left(t^{1/4-3\kappa/2} + s^\nu(\SNorm(A)+\SNorm(A)^3)\right)\;.
\end{equs}
Furthermore
\begin{equ}
\int_0^1 \mrd\bzeta(y) = t\int_0^1\mrd \ell_{c h(0)}(y)
+
O(t^{5/4-7\kappa/2} + s^{\eta/2+\frac12}t)\;,
\end{equ}
where we used \eqref{eq:Ds} and the fact that $t(t+s) \leq t^{5/4-7\kappa/2} + s^{\eta/2+\frac12}t$.
Therefore
\begin{equs}
\Trace \int_{[0,1]^2} \{\mrd\bzeta(x)
&
\mrd\bgamma(y)\}
= 
t \Trace \Big( \int_{[0,1]^2} \mrd \ell_{A(0)}(x_1) \mrd \ell_{c h(0)}(x_2) \Big)
\\
& + O(ts^\lambda \heatgr{\Psi^\ym_t}_{\alpha,\theta} + ts^\lambda \heatgr{\CP_t \star (\Psi \d \Psi)^\ym}_{\alpha,\theta} + t^{5/4-7\kappa/2})
\\
& + O(ts^\nu(\SNorm(A)+\SNorm(A)^3) + s^{\eta/2+\frac12}t|A(0)|_{L^\infty})\;,
\end{equs}
which gives the 3rd term on the right-hand side of the lemma and the corresponding error terms.

It remains to analyse the error term in \eqref{eq:error_W}.
Note that, by~\eqref{eq:Psa},~\eqref{eq:CFa_CPa}, and~\eqref{eq:ell_var_p},
\begin{equ}[eq:w_bound]
w = |\bgamma|_{\var{\frac1\gamma}}
 \leq |\CF_s A|_{\gr\gamma}
  = |\CP_s A|_{\gr\gamma} + O(s^\nu)
 = O(s^{\lambda}) \heatgr{A}_{\alpha,\theta} + O(s^\nu) \;.
\end{equ}
Furthermore, recalling that $\eta>-\frac23$,
\begin{equ}[eq:v_bound]
v = |\bzeta|_{\var{\frac1\gamma}} \leq |D_h+D_{\err}|_{\gr\gamma} \lesssim s^{\eta/2+\frac12}t + t
= O(t)\;.
\end{equ}
By substituting $u=s^{\lambda} \heatgr{A}_{\alpha,\theta}$ and \eqref{eq:w_bound}-\eqref{eq:v_bound} into \eqref{eq:error_W} and dropping irrelevant terms (in particular remarking that $t^2 u^{L-3}\lesssim u^L + t^2 u$),
we can absorb the $O(\cdots)$ term of \eqref{eq:error_W} into that of \eqref{eq:reg_W_ell}.
\end{proof}

\begin{remark}
Connecting to the discussion in Section \ref{sec:idea},
the second and third term on the right-hand of \eqref{eq:reg_W_ell} are our `good terms' that we will use to exhibit a difference between $W_\ell (\CF_s \tilde A)$ and $W_\ell(\CF_s A)$ after taking expectations with suitable choices for $A(0)$ and $h(0)$.
All the terms in $O(\cdots)$ are the `remainders' that we will show are of lower order.
\end{remark}

\section{Estimates on quadratic terms}
\label{sec:quadratic}

In this section we estimate $\quadnorm{A}_{\gamma,\delta}$ when $A$ is of the form $A=\Psi_t+R_t$ where $\Psi$ solves the stochastic heat equation (SHE) with zero initial condition and $R$ is a remainder which is small in $\CC^{\bar\eta}$ for $\bar\eta<0$ close to $0$.

\subsection{Deterministic estimates}

\begin{lemma}\label{lem:quadnorm_pert}
Suppose $\gamma \in(0,1]$, $\delta > 0$, and $\eta\leq\bar\eta\leq 0$ with $\eta+\bar\eta>1 -2\delta$.
Then
\begin{equ}
\quadnorm{A+B}_{\gamma,\delta} \lesssim \quadnorm{A}_{\gamma,\delta} + |B|_{\CC^{\bar\eta}}(|A|_{\CC^{\eta}}+|B|_{\CC^{\bar\eta}})\;.
\end{equ}
\end{lemma}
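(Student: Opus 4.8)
The plan is to expand the bilinear-type quantity $\CN_s(A+B) = P_s(A+B) \otimes \nabla P_s(A+B)$ into four pieces: $\CN_s A = P_sA\otimes\nabla P_sA$, the "pure $B$" term $P_sB\otimes\nabla P_sB$, and the two cross terms $P_sA\otimes\nabla P_sB$ and $P_sB\otimes\nabla P_sA$. The first piece contributes exactly $\CN_s A - \CN_s 0 = \CN_s A$, whose $|\cdot|_{\gr\gamma}$ norm after multiplication by $s^\delta$ is by definition bounded by $\quadnorm{A}_{\gamma,\delta}$. So the whole task reduces to bounding $s^\delta |P_sX\otimes \nabla P_sY|_{\gr\gamma}$ for the three remaining choices $(X,Y) \in \{(B,B),(A,B),(B,A)\}$, and showing each is $\lesssim |B|_{\CC^{\bar\eta}}(|A|_{\CC^\eta}+|B|_{\CC^{\bar\eta}})$.

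First I would record the two elementary ingredients. For the $|\cdot|_{\gr\gamma}$ norm of a product $fg$ where $f$ is continuous and bounded and $g$ is a continuous vector field, one has $|fg|_{\gr\gamma} \lesssim |f|_\infty |g|_\infty$ simply because $\gamma \le 1$ and the line integral over a segment of length $|\ell|\le 1/4$ is bounded by $|\ell|\,|f|_\infty|g|_\infty \le |\ell|^\gamma |f|_\infty |g|_\infty$ (using $|\ell|\le 1$). Second, the standard heat-kernel smoothing estimates from \eqref{e:CPA}: $|P_s X|_\infty \lesssim s^{\zeta/2}|X|_{\CC^\zeta}$ and $|\nabla P_s X|_\infty \lesssim s^{\zeta/2 - 1/2}|X|_{\CC^\zeta}$ for $\zeta \le 0$. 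Applying these with $\zeta = \eta$ for the $A$-factors and $\zeta = \bar\eta$ for the $B$-factors, the three cross/pure terms are bounded in $|\cdot|_\infty$ (hence in $|\cdot|_{\gr\gamma}$) by, respectively,
\begin{equ}
s^{\bar\eta - 1/2}|B|_{\CC^{\bar\eta}}^2\;,\qquad
s^{(\eta+\bar\eta)/2 - 1/2}|A|_{\CC^\eta}|B|_{\CC^{\bar\eta}}\;,\qquad
s^{(\eta+\bar\eta)/2 - 1/2}|B|_{\CC^{\bar\eta}}|A|_{\CC^\eta}\;,
\end{equ}
where for the first term I used $\bar\eta \le \bar\eta$ in both factors and for the second and third one factor carries $\eta$ and the other $\bar\eta$. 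Since $\eta \le \bar\eta$, we have $\bar\eta - 1/2 \ge (\eta+\bar\eta)/2 - 1/2$, so in all three cases the power of $s$ is at least $(\eta+\bar\eta)/2 - 1/2$.

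Multiplying by $s^\delta$ and taking the supremum over $s\in(0,1)$, the supremum is finite precisely when $\delta + (\eta+\bar\eta)/2 - 1/2 \ge 0$, i.e. $\eta + \bar\eta \ge 1 - 2\delta$, which is exactly the hypothesis (the strict inequality $\eta+\bar\eta > 1-2\delta$ is more than enough, giving a positive power of $s$ and hence a bounded, indeed vanishing-at-$0$, contribution). Collecting the three bounds and adding the $\quadnorm{A}_{\gamma,\delta}$ coming from the $\CN_s A$ term yields
\begin{equ}
\quadnorm{A+B}_{\gamma,\delta} \lesssim \quadnorm{A}_{\gamma,\delta} + |B|_{\CC^{\bar\eta}}^2 + |A|_{\CC^\eta}|B|_{\CC^{\bar\eta}} \lesssim \quadnorm{A}_{\gamma,\delta} + |B|_{\CC^{\bar\eta}}\big(|A|_{\CC^\eta}+|B|_{\CC^{\bar\eta}}\big)\;,
\end{equ}
as claimed. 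There is no real obstacle here: the only point requiring a little care is matching the worst power of $s$ across the three terms — one must use $\eta \le \bar\eta$ to see that the "pure $B^2$" term (power $s^{\bar\eta-1/2}$) is at least as integrable as the cross terms, so that a single hypothesis $\eta+\bar\eta > 1-2\delta$ controls everything; and one should note that the bound $|fg|_{\gr\gamma}\lesssim|f|_\infty|g|_\infty$ is legitimate only because $\gamma\le 1$ and line segments have length $\le 1/4 \le 1$. The argument is a routine application of \eqref{e:CPA} together with the definition of $\quadnorm{\cdot}_{\gamma,\delta}$.
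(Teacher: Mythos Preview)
Your proof is correct and follows essentially the same approach as the paper: expand $\CN_s(A+B)$ bilinearly, bound the cross terms and the pure-$B$ term in $L^\infty$ via the heat-flow estimates \eqref{e:CPA}, and then use $|\cdot|_{\gr\gamma}\le|\cdot|_\infty$ (which is all that is needed, rather than the slightly more specific $|fg|_{\gr\gamma}\lesssim|f|_\infty|g|_\infty$). The paper phrases the exponent condition as $s^{(\eta+\bar\eta-1)/2}\lesssim s^{-\delta}$ and $s^{(2\bar\eta-1)/2}\lesssim s^{-\delta}$, which is exactly your observation that $\delta+(\eta+\bar\eta)/2-1/2\ge 0$ together with $\eta\le\bar\eta$.
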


\begin{proof}
We have
\begin{equ}
\CN_s (A+B) = \CN_s A + \CN_s B + P_s A \otimes\nabla P_s B + P_s B \otimes\nabla P_s A\;.
\end{equ}
By standard heat flow estimates, we estimate the cross terms in $L^\infty$ by
\begin{equ}
|P_s A \otimes\nabla P_s B|_\infty + |P_s B \otimes\nabla P_s A|_\infty
\lesssim s^{(\eta+\bar\eta-1)/2}|A|_{\CC^\eta} |B|_{\CC^{\bar\eta}}\lesssim s^{-\delta}|A|_{\CC^\eta} |B|_{\CC^{\bar\eta}}\;,
\end{equ}
where we used $\eta+\bar\eta>1 -2\delta$.
Moreover, $|\CN_s B|_\infty \lesssim s^{(2\bar\eta-1)/2} |B|_{\CC^{\bar\eta}}^2 \lesssim s^{-\delta} |B|_{\CC^{\bar\eta}}^2$
since $2\bar\eta>1 -2\delta$.
Since $|\cdot|_{\gr\gamma} \leq |\cdot|_\infty$, the conclusion follows.
\end{proof}

\subsection{Stochastic estimates}

Consider now $\Psi$ solving the SHE, i.e. $\d_t \Psi = \Delta \Psi + \xi$, where $\xi$ is a white noise, with $\Psi_0=0$.
For $s\in (0,1), t\in [0,1]$, let us denote
\begin{equ}
Z_{s,t} = s^\delta \CN_s \Psi_t\;.
\end{equ}

\begin{theorem}\label{thm:Z_Holder}
Let $\delta \in (\frac34,1)$ and $\kappa\in(0,\frac12)$ such that $4\delta-3-2\kappa>0$.
Then for all $0<\gamma<2\delta-1-\kappa$, $0<\bar\kappa<\kappa$,
and $p\in[1,\infty)$
\begin{equ}
\E \Big[\Big|\sup_{(s,t)\neq(\bar s,\bar t)} \frac{|Z_{s,t}-Z_{\bar s,\bar t}|_{\gr\gamma}}{(|t-\bar t| + |s-\bar s|)^{\bar\kappa}}\Big|^p\Big]^{1/p} < \infty
\end{equ}
\end{theorem}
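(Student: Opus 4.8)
\emph{Overall approach.} The proof follows a Kolmogorov-type scheme in the spirit of \cite{CCHS_2D,CCHS_3D}, the main differences being that we work in the line-integral spaces $\Omega_{\gr\gamma}$ of \cite{Chevyrev19YM} rather than in H\"older--Besov spaces, and that, since $\Psi$ solves the \emph{linear} SHE, the quadratic functional $\CN_s\Psi_t$ requires no renormalisation. The first step is to observe that for fixed $s,t$ the smooth field $\CN_s\Psi_t = P_s\Psi_t\otimes\nabla P_s\Psi_t$ (and hence $Z_{s,t}$, and any increment $Z_{s,t}-Z_{\bar s,\bar t}$) is \emph{centred}: by translation invariance of the law of $\Psi_t$ on $\T^3$ one has $\E[(P_s\Psi_t)^a(\partial_j P_s\Psi_t)^a]=\tfrac12\partial_j\E[((P_s\Psi_t)^a)^2]=0$, while for distinct components of the white noise the pairing vanishes by independence; thus $Z_{s,t}$ lies in the second homogeneous Wiener chaos of $\xi$. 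By Gaussian hypercontractivity it therefore suffices to prove all the required increment bounds for $p=2$; these upgrade to arbitrary $p<\infty$, which we then use to close the chaining below.

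\emph{Second-moment estimate.} The heart of the proof is a bound of the form
\begin{equ}
\E\Big[\Big|\int_\ell (Z_{s,t}-Z_{\bar s,\bar t})\Big|^2\Big]
\ \lesssim\ |\ell|^{4\delta-2-2\kappa}\,\big(|s-\bar s|+|t-\bar t|\big)^{2\rho_0}
\qquad(\ell\in\CL)
\end{equ}
for a suitable $\rho_0>0$. Using the It\^o isometry on the second chaos, $\E[|\int_\ell Z_{s,t}|^2]$ is an explicit finite sum of double integrals over $\ell\times\ell$ of products (with two derivatives in total) of the covariance $G_{s,t}(w)=\tfrac12\int_{2s}^{2(s+t)}p_u(w)\,\mrd u$ and its derivatives, where $p_u$ is the heat kernel on $\T^3$, which for the relevant range of $u$ reduces to $\R^3$ heat-kernel estimates up to smooth bounded corrections. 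The standard three–dimensional bounds $|G_{s,t}(w)|\lesssim |w|^{-1}\wedge s^{-1/2}$, $|\nabla G_{s,t}(w)|\lesssim |w|^{-2}\wedge(|w|\,s^{-3/2})$, $|\nabla^2 G_{s,t}(w)|\lesssim |w|^{-3}\wedge s^{-3/2}$, together with the substitution $\sigma=r_1-r_2$ (so that the argument $w=\sigma v$ has $|w|=|\sigma|\,|\ell|$), give
\begin{equ}
\E\Big[\Big|\int_\ell \CN_s\Psi_t\Big|^2\Big]\ \lesssim\ |\ell|^2\int_0^1\big(|\ell|^{-4}\sigma^{-4}\wedge s^{-2}\big)\,\mrd\sigma\ \lesssim\ |\ell|^2 s^{-2}\ \wedge\ |\ell|\,s^{-3/2}\;,
\end{equ}
where the second term is the genuine gain of the line integral over the pointwise size $|\CN_s\Psi_t|_{L^\infty}^2\sim s^{-2}$. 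Multiplying by $s^{2\delta}$ and optimising $|\ell|^2 s^{2\delta-2}\wedge|\ell|\,s^{2\delta-3/2}$ over $s\in(0,1)$ (the two terms balance at $s=|\ell|^2$, and $\delta>3/4$ makes $2\delta-\tfrac32>0$) yields $\sup_{s}\E[|\int_\ell Z_{s,t}|^2]\lesssim |\ell|^{4\delta-2}$. The increment in $(s,t)$ is obtained by the usual telescoping estimates for $p_u-p_{\bar u}$ and for $\partial_s,\partial_t$ of $P_s\Psi_t$, each time derivative costing a factor $u^{-1}$ but gaining a factor $(|s-\bar s|+|t-\bar t|)^\theta u^{-\theta}$ for any $\theta\in[0,1]$; the hypotheses $\delta>3/4$, $4\delta-3-2\kappa>0$ and $\gamma<2\delta-1-\kappa$ are precisely what is needed to keep the $|\ell|$-exponent above $2\gamma$ (indeed $4\delta-2-2\kappa>2\gamma$) while retaining a positive power $2\rho_0$ of $|s-\bar s|+|t-\bar t|$, uniformly as $s\downarrow0$.

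\emph{Kolmogorov chaining.} Combining the above with hypercontractivity gives, for every $p<\infty$, $\E[|\int_\ell(Z_{s,t}-Z_{\bar s,\bar t})|^p]^{1/p}\lesssim |\ell|^{2\delta-1-\kappa}(|s-\bar s|+|t-\bar t|)^{\rho_0}$. Since $\ell\mapsto\int_\ell\cdot$ is additive under concatenation of parallel segments and $Z_{s,t}$ is smooth for $s>0$, the Kolmogorov criterion for $\Omega_{\gr\gamma}$-valued random fields (as in \cite{Chevyrev19YM}, cf.\ \cite{CCHS_2D}) — whose chaining over the $\lesssim|\ell|^{-3}$ positions and the directions at each dyadic scale consumes exactly the slack $4\delta-2-2\kappa-2\gamma>0$ — upgrades this, for $p$ large, to $\E[|Z_{s,t}-Z_{\bar s,\bar t}|_{\gr\gamma}^p]^{1/p}\lesssim(|s-\bar s|+|t-\bar t|)^{\rho_0}$, uniformly down to $s\downarrow0$ (so that $Z_{\cdot,t}$ extends continuously to $s=0$ in $\Omega_{\gr\gamma}$). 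A second, ordinary Kolmogorov argument in the two-parameter variable $(s,t)\in(0,1)\times[0,1]$ then converts this into a.s.\ $\bar\kappa$-H\"older continuity of $(s,t)\mapsto Z_{s,t}$ into $\Omega_{\gr\gamma}$ with moments of all orders, for any $\bar\kappa<\rho_0$; since $\rho_0$ can be taken as any number $<2\delta-\tfrac32-\kappa$, in particular larger than any prescribed $\bar\kappa<\kappa$ (shrinking $\kappa$ if necessary), this is the claim.

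\emph{Expected main obstacle.} The delicate point is the second-moment estimate: one must extract the line-integral gain $|\ell|\,s^{-3/2}$ over the naive $|\ell|^2 s^{-2}$ \emph{uniformly} as $s\downarrow0$ and, simultaneously, keep track of the joint $(s,t)$-modulus of continuity without destroying that gain — this is where the exact arithmetic of $\delta>3/4$, $4\delta-3-2\kappa>0$ and $\gamma<2\delta-1-\kappa$ has to be respected. A secondary technical point is that the $\Omega_{\gr\gamma}$ Kolmogorov criterion is being applied to the non-$1$-form object $\CN_s\Psi_t$ (its line integral is $\mfg^3$-valued, not $\mfg$-valued); this is legitimate because $\int_\ell$ of a $\mfg^3$-valued function is still additive under parallel concatenation and $\CN_s\Psi_t$ is smooth at fixed $s>0$, which are the only structural inputs the criterion uses.
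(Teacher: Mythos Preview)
Your overall strategy---second-moment bounds on line integrals of the second-chaos field $Z_{s,t}-Z_{\bar s,\bar t}$, hypercontractivity, then a Kolmogorov-type argument first in $\ell$ and then in $(s,t)$---is the same as the paper's. Your single-line estimate $\E|\int_\ell(Z_{s,t}-Z_{\bar s,\bar t})|^2\lesssim |\ell|^{4\delta-2-2\kappa}(|s-\bar s|+|t-\bar t|)^{2\rho_0}$ is also correct and matches the paper's Lemma~\ref{lem:Z_diff_moment}, equation~\eqref{eq:Z_ell_bound}.

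There is, however, a genuine gap in your chaining step. You assert that additivity under \emph{parallel} concatenation together with smoothness of $Z_{s,t}$ for fixed $s>0$ suffices to run the $\Omega_{\gr\gamma}$-Kolmogorov criterion. It does not: the norm $|\cdot|_{\gr\gamma}$ is a supremum over \emph{all} lines $\ell=(x,v)\in\CL$, a six-dimensional set, and parallel concatenation only lets you chain along the one-dimensional length parameter. To pass from a dyadic set of lines to the full supremum you must control how $\int_\ell$ varies as you move the endpoints of $\ell$ transversally, i.e.\ you need a second moment bound on $(\int_\ell-\int_{\bar\ell})(Z_{s,t}-Z_{\bar s,\bar t})$ in terms of the line metric $d(\ell,\bar\ell)$. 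This is precisely the second input \eqref{eq:A_P} to the paper's Kolmogorov lemma (Lemma~\ref{lem:Kolmogorov_A}), and the paper supplies it as \eqref{eq:Z_P_bound}. Smoothness at fixed $s>0$ gives you pathwise continuity of $\ell\mapsto\int_\ell Z_{s,t}$, which lets you replace the full supremum by a countable one, but it gives \emph{no quantitative} bound uniform in $s$, and a naive union bound over $\asymp 2^{6N}$ dyadic lines at each scale diverges.

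Establishing \eqref{eq:Z_P_bound} requires an ingredient you do not mention: a \emph{derivative} bound on the covariance, $|\nabla C_{s,\bar s;t,\bar t}(x)|\lesssim (|s-\bar s|+|t-\bar t|)^{\kappa}|x|^{4\delta-5-2\kappa}$ (see \eqref{eq:C_diff_bound}). The paper then reduces to the case where $\ell,\bar\ell$ share a starting point and have equal length (a ``vee''), and splits the double integral over $[0,1]^2$ into the regions $|r-\bar r|\le h$ and $|r-\bar r|>h$ with $h=|v-\bar v|/|\ell|$, using the undifferentiated covariance bound on the first region and the derivative bound on the second. This is the step where the condition $4\delta-3-2\kappa>0$ (equivalently $4\delta-4-2\kappa>-1$) is used to make the near-diagonal integral converge. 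Without this second estimate your chaining does not close.
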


\begin{proof}
Combining Lemmas \ref{lem:Z_diff_moment} and~\ref{lem:Kolmogorov_A} below, we obtain
\begin{equ}
( \E |Z_{s,t}-Z_{\bar s,\bar t}|^p_{\gr\gamma} )^{1/p} \lesssim
(|t-\bar t|+|s-\bar s|)^{\kappa/2}\;.
\end{equ}
The conclusion follows from Kolmogorov's criterion~\cite[Thm.~4.23]{Kallenberg21} applied to the stochastic process $Z \colon (0,1)\times [0,1] \to \Omega_{\gr\gamma}$.
\end{proof}

\begin{corollary}\label{cor:SHE_quadnorm}
Consider $\delta\in (\frac34,1)$ and $0<\gamma<2\delta-1$.
Then there exists $\bar\kappa>0$ such that, for all $p\in[1,\infty)$,
\begin{equ}
\E
\Big|
\sup_{t\in[0,1]}t^{-\bar\kappa}\quadnorm{\Psi_t}_{\gamma,\delta}
\Big|^p < \infty\;.
\end{equ}
\end{corollary}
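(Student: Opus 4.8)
The plan is to obtain Corollary~\ref{cor:SHE_quadnorm} as an immediate consequence of Theorem~\ref{thm:Z_Holder} together with the fact that $\Psi$ has vanishing initial condition, which converts the joint H\"older regularity of $(s,t)\mapsto Z_{s,t}$ into polynomial decay of $\quadnorm{\Psi_t}_{\gamma,\delta}$ as $t\downarrow 0$.

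First I would fix the parameters. Given $\delta\in(\tfrac34,1)$ and $0<\gamma<2\delta-1$, one has $4\delta-3>0$ and $2\delta-1-\gamma>0$, so the set $\{\kappa\in(0,\tfrac12):4\delta-3-2\kappa>0\text{ and }\gamma<2\delta-1-\kappa\}$ is non-empty; fix such a $\kappa$ and pick any $\bar\kappa\in(0,\kappa)$. Theorem~\ref{thm:Z_Holder}, applied with these $\delta,\kappa,\gamma,\bar\kappa$, then gives that the random variable
\[
M\eqdef\sup_{(s,t)\neq(\bar s,\bar t)}\frac{|Z_{s,t}-Z_{\bar s,\bar t}|_{\gr\gamma}}{(|t-\bar t|+|s-\bar s|)^{\bar\kappa}}
\]
satisfies $\E[M^p]<\infty$ for every $p\in[1,\infty)$, the supremum being over $(s,t),(\bar s,\bar t)\in(0,1)\times[0,1]$.

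Next I would insert $\bar t=0$. Since $\Psi_0=0$ we have $\CN_s\Psi_0=P_s\Psi_0\otimes\nabla P_s\Psi_0=0$, hence $Z_{s,0}=0$ for all $s\in(0,1)$, and therefore $|Z_{s,t}|_{\gr\gamma}=|Z_{s,t}-Z_{s,0}|_{\gr\gamma}\le M\,t^{\bar\kappa}$ uniformly in $s\in(0,1)$ and $t\in[0,1]$. Recalling $\quadnorm{\Psi_t}_{\gamma,\delta}=\sup_{s\in(0,1)}s^\delta|\CN_s\Psi_t|_{\gr\gamma}=\sup_{s\in(0,1)}|Z_{s,t}|_{\gr\gamma}$, this yields the pathwise bound $t^{-\bar\kappa}\quadnorm{\Psi_t}_{\gamma,\delta}\le M$ for all $t\in[0,1]$, and the claimed moment bound follows from $\E[M^p]<\infty$.

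The deduction above is routine; all the substance sits in Theorem~\ref{thm:Z_Holder} (and the two lemmas behind it). The only points that need a moment's attention are checking that the admissible range of $\kappa$ is non-empty (which is exactly where the hypotheses $\delta>\tfrac34$ and $\gamma<2\delta-1$ enter) and ensuring that the supremum defining $M$ ranges over a set containing $t=0$, so that substituting the vanishing initial datum $Z_{s,0}=0$ is legitimate --- this last step is what upgrades continuity at $t=0$ to the quantitative decay $t^{\bar\kappa}$.
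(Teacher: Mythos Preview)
Your proposal is correct and follows essentially the same approach as the paper: choose admissible $\kappa,\bar\kappa$, invoke Theorem~\ref{thm:Z_Holder}, and specialise to $\bar s=s$, $\bar t=0$ using $Z_{s,0}=0$ from $\Psi_0=0$. Your write-up is in fact more explicit than the paper's in verifying that the parameter range for $\kappa$ is non-empty.
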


\begin{proof}
There exist $\kappa,\bar\kappa$ satisfying the conditions of Theorem~\ref{thm:Z_Holder}.
Remark that
$\sup_{t\in[0,1]}t^{-\bar\kappa}\quadnorm{\Psi_t}_{\gamma,\delta} = \sup_{t\in[0,1]} \sup_{s\in(0,1)} t^{-\bar\kappa} |Z_{s,t}|_{\gr\gamma}$.
Furthermore $Z_{s,0}=0$ for all $s\in (0,1)$ since $\Psi(0)=0$.
The conclusion thus follows by applying Theorem~\ref{thm:Z_Holder} with $s=\bar s$ and $\bar t=0$.
\end{proof}

\begin{lemma}
Let $C_{s,\bar s;t,\bar t}(x) \eqdef \E 
\big( Z_{s,t} - Z_{\bar s, \bar t}
 \big)(0) \otimes 
\big( Z_{s,t} - Z_{\bar s,\bar t}
\big)(x)$.
For $\delta\in (\frac12,1)$ and $\kappa \in [0,\frac12)$,
we have uniformly in $s,\bar s \in (0,1)$, $t,\bar t\in [0,1]$, and $0\neq x\in\T^3$,
\begin{equ}[eq:C_bound]
|C_{s,\bar s;t,\bar t}(x) |
\lesssim (|s-\bar s|+|t-\bar t|)^{\kappa}  |x|^{4\delta-4-2\kappa}
\end{equ}
and
\begin{equ}[eq:C_diff_bound]
|\nabla C_{s,\bar s;t,\bar t}| 
\lesssim
(|s-\bar s|+|t-\bar t|)^{\kappa}  |x-y|^{4\delta-5-2\kappa}
\;.
\end{equ}
\end{lemma}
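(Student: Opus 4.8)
The plan is to compute $C_{s,\bar s;t,\bar t}$ explicitly from the Gaussian structure of $\Psi$ and then estimate the resulting heat-kernel integrals, reading off the factor $(|s-\bar s|+|t-\bar t|)^\kappa$ from the alternating-sum structure of $C$.

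\emph{Gaussian structure.} Write $W_{s,t}\eqdef P_s\Psi_t$, a centred $\mfg^3$-valued Gaussian field; since $\partial_t\Psi=\Delta\Psi+\xi$ with $\Psi_0=0$ we have $W_{s,t}=\int_0^t P_{s+t-r}\xi_r\,\mrd r$, so the white-noise covariance together with the semigroup property gives
\begin{equ}
\E\bigl[W_{s,t}(y)\otimes W_{\bar s,\bar t}(y')\bigr]=\mathrm{Id}_{\mfg^3}\, G_{s,\bar s;t,\bar t}(y-y')\,,\qquad G_{s,\bar s;t,\bar t}(w)\eqdef \tfrac12\int_{s+\bar s+|t-\bar t|}^{\,s+\bar s+t+\bar t} p_\sigma(w)\,\mrd\sigma\,,
\end{equ}
with $p_\sigma$ the periodised heat kernel (periodicity contributes only smooth corrections). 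The covariances $\E[W\otimes\nabla W]$ and $\E[\nabla W\otimes\nabla W]$ are obtained from $G$ by applying $\nabla$ and $\nabla^{\otimes2}$. A key elementary observation is that $\E[W_{s,t}(x)\otimes\nabla W_{s,t}(x)]=\tfrac12\nabla_x[G_{s,s;t,t}(0)]=0$ by translation invariance; hence $\CN_s\Psi_t$, and in particular $Z_{s,t}$ (which is a Lie-bracket contraction of it, so bilinear in $\Psi$ with bounded structure constants), is centred and lies in the second Wiener chaos. By Wick's theorem, for $a,b\in\{(s,t),(\bar s,\bar t)\}$ the quantity $\E[Z_a(0)\otimes Z_b(x)]$ equals the sum of two double-contraction terms, each a product of a $G_{a,b}$-covariance carrying $0$, $1$ or $2$ spatial derivatives with another such, multiplied by $s_a^\delta s_b^\delta$; expanding the product $(Z_{s,t}-Z_{\bar s,\bar t})(0)\otimes(Z_{s,t}-Z_{\bar s,\bar t})(x)$ writes $C_{s,\bar s;t,\bar t}(x)$ as the alternating sum $\sum_{a,b}\epsilon_a\epsilon_b(\cdots)$ with $\epsilon_{(s,t)}=1$, $\epsilon_{(\bar s,\bar t)}=-1$.

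\emph{The crude bound ($\kappa=0$).} Using $|\nabla^k p_\sigma(w)|\lesssim\sigma^{-(3+k)/2}e^{-|w|^2/(c\sigma)}$ and, for $k\geq0$, the bound $\int_u^{\infty}\sigma^{-(3+k)/2}e^{-|w|^2/(c\sigma)}\,\mrd\sigma\lesssim|w|^{-1-k}$ (and $\lesssim u^{-(1+k)/2}$ when $u\geq|w|^2$), together with the fact that every $\sigma$-integral in a $G_{a,b}$-covariance runs over $\sigma\geq s_a+s_b$, one bounds $s_a^\delta s_b^\delta$ times a product of a $0$-derivative and a $2$-derivative covariance (or of two $1$-derivative ones) by $|x|^{4\delta-4}$: in the regime $|x|^2\gtrsim s_a+s_b$ this uses $s_a^\delta s_b^\delta\leq(\tfrac{s_a+s_b}{2})^{2\delta}\lesssim|x|^{4\delta}$ together with the full singularity $|x|^{-1}|x|^{-3}$, and in the regime $|x|^2\lesssim s_a+s_b$ it uses $s_a^\delta s_b^\delta(s_a+s_b)^{-2}\leq(s_a+s_b)^{2\delta-2}\leq|x|^{4\delta-4}$ since $2\delta-2<0$. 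Summing the finitely many contractions gives $|C_{s,\bar s;t,\bar t}(x)|\lesssim|x|^{4\delta-4}$, which is \eqref{eq:C_bound} with $\kappa=0$; the one extra spatial derivative in \eqref{eq:C_diff_bound} costs a factor $|x|^{-1}$, giving $|x|^{4\delta-5}$.

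\emph{Extracting $(|s-\bar s|+|t-\bar t|)^\kappa$.} If $|s-\bar s|+|t-\bar t|\geq|x|^2$, simply write $|x|^{4\delta-4}=|x|^{4\delta-4-2\kappa}|x|^{2\kappa}\leq|x|^{4\delta-4-2\kappa}(|s-\bar s|+|t-\bar t|)^\kappa$ and invoke the crude bound. Otherwise, exploit that $C$ is an alternating sum: telescoping $Z_{s,t}-Z_{\bar s,\bar t}=(Z_{s,t}-Z_{\bar s,t})+(Z_{\bar s,t}-Z_{\bar s,\bar t})$ reduces matters to a pure ``$s$-increment'' and a pure ``$t$-increment'' (the mixed cross-term is itself a first difference in $s$ of a first difference in $t$, hence obeys strictly better bounds). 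In each case the algebra produces either a first difference in the $\sigma$-argument of a $G$-covariance --- e.g. $\int_{s+\bar s}^{2s}\nabla^k p_\sigma(x)\,\mrd\sigma$, or $\int\bigl(p_\sigma(x)-p_{\sigma+|t-\bar t|}(x)\bigr)\mrd\sigma$, both of size $\lesssim(|s-\bar s|+|t-\bar t|)\sup_{\sigma\geq s\wedge\bar s}|\nabla^k p_\sigma(x)|$ --- or a term with a prefactor $(s^\delta-\bar s^\delta)^2\lesssim|s-\bar s|^{2\delta}$ (recall $\delta<1$). Interpolating each such quantity between its crude bound and this ``one-$\sigma$-derivative'' bound, at the weight which makes the power of $|s-\bar s|+|t-\bar t|$ equal to $\kappa$, produces the extra factor $(|s-\bar s|+|t-\bar t|)^\kappa$ at the cost of $|x|^{-2\kappa}$; since $\delta>\tfrac12>\kappa$, in this regime $|s-\bar s|^{2\delta}\lesssim|s-\bar s|^\kappa|x|^{4\delta-2\kappa}$, which fits the same bound. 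Collecting cases gives \eqref{eq:C_bound}, and the identical argument with one further spatial derivative gives \eqref{eq:C_diff_bound}.

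\emph{Main obstacle.} The conceptual part --- the Wick expansion and the heat-kernel estimates --- is routine; the real work is the bookkeeping in the last step, keeping the power of $|x|$ exactly at $4\delta-4-2\kappa$ (resp. $4\delta-5-2\kappa$) while extracting $(|s-\bar s|+|t-\bar t|)^\kappa$ uniformly over the regimes governed by the three competing scales $|x|^2$, $s\wedge\bar s$ and $|s-\bar s|+|t-\bar t|$, and over the contraction terms carrying $0$, $1$ or $2$ spatial derivatives. The hypotheses $\delta\in(\tfrac12,1)$ and $\kappa\in[0,\tfrac12)$ are exactly what make all the resulting exponents have the right sign for these case distinctions to close (and, downstream, what make $4\delta-4-2\kappa>-3$).
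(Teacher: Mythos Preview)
Your argument is essentially correct and follows the standard route for such second-chaos heat-kernel estimates: Wick expansion, pointwise heat-kernel bounds, and interpolation between the crude ($\kappa=0$) bound and the bound obtained by exploiting one increment in $s$ or $t$. One small inaccuracy: $Z_{s,t}=s^\delta\CN_s\Psi_t$ is the full tensor $P_s\Psi_t\otimes\nabla P_s\Psi_t$, not a Lie-bracket contraction of it; this does not affect anything since your only use of that remark is that $Z$ is bilinear in $\Psi$ and centred, which is true for the tensor itself (by evenness of the covariance, $\nabla G(0)=0$).

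The paper takes a rather different expository route: it does not prove the bounds from scratch but simply cites \cite[Lem.~3.4]{CCHS_3D} for \eqref{eq:C_bound}, and for \eqref{eq:C_diff_bound} records the two elementary estimates $|\nabla C_{r,s}(x)|\lesssim|x|^{-2}$ and $|\nabla(C_{r,r}-C_{r,s})(x)|\lesssim|r-s|^\kappa|x|^{-2-2\kappa}$ on the \emph{linear} covariance $C_{r,s}(x)=\E\scal{\Psi(r,0),\Psi(s,x)}$, then observes that the proof of \cite[Lem.~3.4]{CCHS_3D} goes through with each $C$ replaced by $\nabla C$ and each exponent shifted down by $1$. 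So the paper's argument is organised around first bounding the building blocks (the linear covariance and its increments) and then feeding those into the cited second-chaos estimate, whereas you bound the full second-chaos object directly via the Wick expansion and a regime decomposition in $(|x|^2,\,s\wedge\bar s,\,|s-\bar s|+|t-\bar t|)$. Your approach is self-contained and makes the mechanism transparent; the paper's is shorter but relies on the reader having \cite{CCHS_3D} at hand.
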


\begin{proof}
  The first bound is due to the proof of \cite[Lem.~3.4]{CCHS_3D} (see, in particular, the bound after Eq.~(3.6) therein).
  For the second bound, as in \cite[below Eq.~(3.5)]{CCHS_3D} one has, for $\kappa\in [0,\frac12)$,
  \begin{equ}
  |\nabla C_{r,s}(x)| \lesssim   |x|^{-2}\;,
  \qquad
|\nabla(C_{r,r}-C_{r,s})(x)| 
\lesssim 
|r-s|^{\kappa} |x|^{-2-2\kappa}
\end{equ}
uniformly in $r,s\in (0,1)$ and $x\in \T^3\setminus\{0\}$,
where 
$C_{r,s}(x) = \E \scal{ \Psi(r,0),\Psi(s,x)}_{\mfg^3}$.
Then  the bounds in \cite[(3.6)]{CCHS_3D}  again hold, with each kernel $C$ on the left-hand sides therein replaced by $\nabla C$ and each  $2\alpha-1$ in the exponent on the right-hand sides therein replaced by $2\alpha-2$.
Then the same argument as in \cite[Lem.~3.4]{CCHS_3D} yields \eqref{eq:C_diff_bound}.
\end{proof}

Recall the space of line segments $\CL = \T^3 \times \{v\in\R^3\,:\,|v|\leq \frac14\}$.
Define the metric $d$ on $\T^3 \times \R^3$ by
\begin{equ}
d((x,v),(\bar x,\bar v)) \eqdef |x-\bar x|\vee|x+v - (\bar x+\bar v)|\;.
\end{equ}
We say that $\ell,\bar\ell \in \CL$ are \emph{far} if $d(\ell,\bar\ell) > \frac14 (|\ell| \wedge |\bar\ell|)$.

\begin{lemma}\label{lem:Z_diff_moment}
Let $\delta,\kappa$ be as in Theorem \ref{thm:Z_Holder}.
Then for all $p \in [1,\infty)$,
uniformly over $s,\bar s \in (0,1)$, $t,\bar t\in [0,1]$, and $\ell\in\CL$
\begin{equ}[eq:Z_ell_bound]
\Big(\E \Big|\int_\ell (Z_{s,t}-Z_{\bar s,\bar t})\Big |^p\Big)^{1/p} \lesssim (|t-\bar t|+|s-\bar s|)^{\kappa}
|\ell|^{2\delta - 1 -\kappa}
\end{equ}
and over all $\ell,\bar\ell\in\CL$
\begin{equ}[eq:Z_P_bound]
\Big( \E \Big|\Big(\int_{\ell} - \int_{\bar\ell}\Big)(Z_{s,t}-Z_{\bar s,\bar t})\Big|^p \Big)^{1/p}
\lesssim
(|t-\bar t|+|s-\bar s|)^{\kappa/2}
d(\ell,\bar\ell)^{2\delta - 3/2 - \kappa}\;.
\end{equ}
\end{lemma}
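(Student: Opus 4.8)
The plan is to reduce both estimates to Gaussian moment bounds via equivalence of moments in a fixed Wiener chaos. Since $Z_{s,t}-Z_{\bar s,\bar t}$ is a contraction of two instances of the Gaussian field $\Psi$, each component of $\int_\ell(Z_{s,t}-Z_{\bar s,\bar t})$ and of $(\int_\ell-\int_{\bar\ell})(Z_{s,t}-Z_{\bar s,\bar t})$ lies in the second Wiener chaos, so by Nelson's hypercontractivity it suffices to prove the $p=2$ case; higher moments follow with the same power of $(|t-\bar t|+|s-\bar s|)$ and of $|\ell|$ (resp. $d(\ell,\bar\ell)$) up to a $p$-dependent constant. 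Thus I would first reduce to computing $\E|\int_\ell(Z_{s,t}-Z_{\bar s,\bar t})|^2$ and $\E|(\int_\ell-\int_{\bar\ell})(Z_{s,t}-Z_{\bar s,\bar t})|^2$.

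For \eqref{eq:Z_ell_bound}, writing $\int_\ell f = \int_0^1 |v| f(x+rv)\mrd r$ with $\ell=(x,v)$, $|\ell|=|v|$, expand the square to get a double integral
\begin{equ}
\E\Big|\int_\ell(Z_{s,t}-Z_{\bar s,\bar t})\Big|^2 = |v|^2\int_0^1\int_0^1 \Trace\, C_{s,\bar s;t,\bar t}\big((x+r v)-(x+r'v)\big)\,\mrd r\,\mrd r'
\end{equ}
(using translation invariance of the covariance), which by \eqref{eq:C_bound} is bounded by a multiple of $(|s-\bar s|+|t-\bar t|)^{\kappa}|v|^2\int_0^1\int_0^1 |v|^{4\delta-4-2\kappa}|r-r'|^{4\delta-4-2\kappa}\mrd r\mrd r'$. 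Since $4\delta-3-2\kappa>0$ by hypothesis, $4\delta-4-2\kappa>-1$, so the $\mrd r\mrd r'$ integral converges and we obtain the bound $(|s-\bar s|+|t-\bar t|)^{\kappa}|v|^{4\delta-2-2\kappa}$, i.e. $(|t-\bar t|+|s-\bar s|)^\kappa|\ell|^{2\cdot(2\delta-1-\kappa)}$; taking square roots gives \eqref{eq:Z_ell_bound}. One has to be a little careful that $C$ is matrix-valued and $Z$ tensor-valued, but the trace just sums finitely many scalar covariances each obeying \eqref{eq:C_bound}.

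For \eqref{eq:Z_P_bound}, the new ingredient is a cancellation between the two nearby line segments. I would write $(\int_\ell-\int_{\bar\ell})f = \int_0^1 |v| f(x+rv)\mrd r - \int_0^1 |\bar v|f(\bar x+r\bar v)\mrd r$ and expand $\E|(\int_\ell-\int_{\bar\ell})(Z_{s,t}-Z_{\bar s,\bar t})|^2$ into four double integrals of the form $\Trace\,C(\cdot)$ evaluated at differences of endpoints of the two segments. The key point is that the second derivative bound \eqref{eq:C_diff_bound} controls how $C$ changes when an argument is shifted by $O(d(\ell,\bar\ell))$: one uses \eqref{eq:C_bound} to extract a factor $(|s-\bar s|+|t-\bar t|)^{\kappa}$ and \eqref{eq:C_diff_bound}, integrated once, to extract the spatial smallness in $d(\ell,\bar\ell)$. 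Concretely, after a change of variables one compares $C$ along the two segments and bounds the difference by $\int_0^{d(\ell,\bar\ell)} |\nabla C|$; the scaling $|x-y|^{4\delta-5-2\kappa}$ integrates (since $4\delta-4-2\kappa>-1$) to produce $d(\ell,\bar\ell)^{4\delta-4-2\kappa+1} = d(\ell,\bar\ell)^{4\delta-3-2\kappa}$, and combined with a residual $|\ell|\vee|\bar\ell|$ factor one reaches $d(\ell,\bar\ell)^{2(2\delta-3/2-\kappa)}$ after splitting the H\"older exponent budget $\kappa = \kappa/2 + \kappa/2$ between time and space regularity. Squaring-rooting then yields \eqref{eq:Z_P_bound}.

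The main obstacle I anticipate is the bookkeeping in \eqref{eq:Z_P_bound}: one must organise the four-term expansion so that the cancellation between $\int_\ell$ and $\int_{\bar\ell}$ is visible, handle the case distinction between $\ell,\bar\ell$ \emph{far} (where no cancellation is needed and \eqref{eq:Z_ell_bound} combined with the triangle inequality already suffices since $d(\ell,\bar\ell)\gtrsim |\ell|\wedge|\bar\ell|$) and $\ell,\bar\ell$ close (where one genuinely needs \eqref{eq:C_diff_bound}), and verify that the exponent arithmetic closes — in particular that $4\delta-3-2\kappa>0$ is exactly what makes all the spatial integrals converge and that the choice $\gamma<2\delta-1-\kappa$ leaves enough room. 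This parallels \cite[Lem.~3.4]{CCHS_3D} and the analogous 2D estimates in \cite{CCHS_2D}, so I would largely follow that template, the only real difference being that here the integrals are over line segments rather than against test functions, which is what the $|\cdot|_{\gr\gamma}$-type estimate \eqref{eq:Z_ell_bound}–\eqref{eq:Z_P_bound} is designed to capture.
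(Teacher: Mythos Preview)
Your reduction to $p=2$ and your proof of \eqref{eq:Z_ell_bound} are correct and match the paper's argument exactly.

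For \eqref{eq:Z_P_bound}, however, your sketch of the close case has a real gap. You propose to bound the difference $C((r-r')v) - C(rv - r'\bar v)$ by $\int_0^{d(\ell,\bar\ell)}|\nabla C|$ and then integrate in $(r,r')$. The problem is that $|\nabla C(y)|$ is evaluated at points $y$ with $|y|\asymp |r-r'||\ell|$, so along the diagonal $r\approx r'$ the gradient bound $|y|^{4\delta-5-2\kappa}$ is \emph{not} integrable in $(r,r')$ (the exponent $4\delta-5-2\kappa$ is below $-1$). The paper's proof handles this by first reducing geometrically to the ``vee'' case $x=\bar x$, $|v|=|\bar v|$ (via an intermediate segment $\ell'$ and a short remainder $r$ controlled by \eqref{eq:Z_ell_bound}), and then splitting the $(r,\bar r)$-integration at the scale $h=|v-\bar v|/|\ell|$: on $\{|r-\bar r|\le h\}$ one uses the direct bound \eqref{eq:C_bound} on each term separately (no cancellation needed, the region is already small), while on $\{|r-\bar r|>h\}$ the points are far enough from the diagonal that \eqref{eq:C_diff_bound} is integrable and yields the gain $|v-\bar v|$. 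Both regions produce exactly $(|s-\bar s|+|t-\bar t|)^\kappa |\ell|\,|v-\bar v|^{4\delta-3-2\kappa}$, and since $|\ell|\le 1$ this gives the claimed $d(\ell,\bar\ell)^{4\delta-3-2\kappa}$.

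Finally, there is no ``splitting $\kappa=\kappa/2+\kappa/2$ between time and space'': both \eqref{eq:C_bound} and \eqref{eq:C_diff_bound} already carry the full factor $(|s-\bar s|+|t-\bar t|)^\kappa$, and the $\kappa/2$ in \eqref{eq:Z_P_bound} comes simply from taking the square root of the second moment.
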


\begin{proof}
By equivalence of moments in a fixed Wiener chaos, it suffices to consider $p=2$.
Let us fix $s,\bar s \in (0,1)$, $t,\bar t\in [0,1]$ and write $C(x)=C_{s,\bar s;t,\bar t}(x)$.
Integrating \eqref{eq:C_bound} bound against a line $\ell=(x,v)$, we obtain
\begin{equs}
\E \Big|\int_\ell (Z_{s,t}-Z_{\bar s,\bar t})\Big |^2
&=
|\ell|^2 \int_{[0,1]^2}  C((r-\bar r)v)
\mrd r \mrd \bar r
\\
&\lesssim
(|t-\bar t|+|s-\bar s|)^{\kappa}
|\ell|^{4\delta - 2 -2\kappa}
\int_{[0,1]^2} |r-\bar r|^{4\delta - 4 -2\kappa} \mrd r \mrd \bar r
\\
&\lesssim
(|t-\bar t|+|s-\bar s|)^{\kappa}
|\ell|^{4\delta - 2 -2\kappa}\;,
\end{equs}
where in the final bound we used $4\delta - 4 - 2\kappa > -1$.
This proves \eqref{eq:Z_ell_bound}.

We now prove \eqref{eq:Z_P_bound}.
Suppose first that $\ell,\bar\ell$ are far. Then the claim follows from \eqref{eq:Z_ell_bound} and the triangle inequality since $2\delta - 3/2 -\kappa < 2\delta - 1 -\kappa$ and $d(\ell,\bar\ell) \gtrsim |\ell|+|\bar\ell|$.

Consider now $\ell=(x,v),\bar\ell=(\bar x,\bar v)\in\CL$ not far.
Consider $\ell' = (x,v')$ where $v' = \bar x + \bar v - x$, see Figure \ref{fig:ells}.
(Note that $\ell'$ might not be in $\CL$ since it is possible that $|v'|>\frac14$.)
Then we can write $\int_{\ell}-\int_{\bar\ell} = \int_\ell - \int_{\ell'} + (\int_{\ell'}-\int_{\bar\ell})$ and remark that $d(\ell,\ell')\vee d(\ell',\bar\ell)=d(\ell,\bar\ell)$.
So it suffices to prove \eqref{eq:Z_P_bound} with $\bar\ell$ replaced by $\ell'$. 

Moreover, we can write $\int_{\ell'} f = \int_{\ell''} f + \int_r f$ where $\ell''=(x,v'')$ and $v''=cv'$ for $c\geq 0$ such that $|v''|=|v|$, and $r\in\CL$ is the `remainder' with $|r|\leq d(\ell,\ell')$,
see again Figure \ref{fig:ells}.
Then we write $\int_\ell-\int_{\bar\ell} = (\int_{\ell'}-\int_{\bar\ell}) + \int_r$
and note that $(\E|\int_r (Z_{s,t}-Z_{\bar s,\bar t})|^2)^{1/2}\lesssim (|t-\bar t|+|s-\bar s|)^{\kappa}
d(\ell,\ell')^{2\delta - 1 -\kappa}$ by \eqref{eq:Z_ell_bound},
which is smaller than the right-hand side of \eqref{eq:Z_P_bound} since
$d(\ell,\ell') \leq |\ell|$.
\begin{figure}[ht]
\centering
\begin{tikzpicture}[scale=3, every node/.style={font=\small}]

  \coordinate (x) at (0,0);
  \coordinate (xplusv) at (3,0); 

  \coordinate (xb) at (0.1,0.3);
  \coordinate (xbplusvb) at (3.5,0.5); 

  \coordinate (ellprime) at (3.5,0.5);
  
  \coordinate (elldprime) at (2.97,0.42);
  
  \draw[->, thick] (x) -- (xplusv) node[midway, below] {$\ell=(x,v)$};
  
  \draw[->, thick] (xb) -- (xbplusvb) node[midway, above] {$\bar\ell=(\bar x,\bar v)$};
  
  \draw[->, thin] (x) -- (ellprime) node[midway, below right] {$\ell'$};
  
  \draw[->, thick, blue] (x) -- (elldprime) node[midway, below left] {$\ell''$};
  
  \draw[->, red, thick] (elldprime) -- (ellprime) node[midway, below] {$r$};
  
  \node[below left] at (x) {$x$};
  \node[above left] at (xb) {$\bar x$};
  
  \node[below right] at (xplusv) {$x+v$};
  \node[above right] at (xbplusvb) {$\bar x+\bar v$};
  
\end{tikzpicture}
\caption{Example of $\ell,\bar\ell$ not far and corresponding $\ell',\ell''$ and $r$. By construction, $\ell'$ is the concatenation of $\ell''$ and $r$.}
\label{fig:ells}
\end{figure}
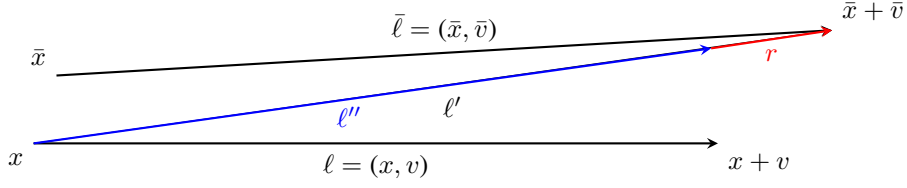

In conclusion, it suffices to prove \eqref{eq:Z_P_bound} for $\ell=(x,v),\bar\ell=(\bar x,\bar v)$ not far and with $x=\bar x$ and $|\ell|=|\bar\ell|$.\footnote{In \cite[Sec.~3.1]{CCHS_2D} such $\ell,\bar\ell$ were said to \emph{form a vee}.}
Let us denote $h \eqdef |v-\bar v|/|\ell| \leq \frac14$.
Then
\begin{equs}
\E \Big| \Big(\int_\ell - \int_{\bar\ell}\Big)(Z_{s,t}-Z_{\bar s,\bar t})\Big|^2
&= |\ell|^2\int_{[0,1]^2}
\{C((r-\bar r)v) - C(rv - \bar r \bar v)
\\
&\qquad\qquad - C(r\bar v - \bar r v) + C((r - \bar r)\bar v)\} \mrd r \mrd \bar r
\label{eq:C_diff_int}
\\
&=
|\ell|^2\int_{[0,1]^2}
\{2C((r-\bar r)v) - 2C(rv - \bar r \bar v)\} \mrd r \mrd \bar r\;,
\end{equs}
where we used that $r\bar v - \bar r v = rv - \bar r\bar v$ by symmetry.

Using the bound \eqref{eq:C_bound}, the integral over the region $|r-\bar r| \leq h$ in \eqref{eq:C_diff_int}
is bounded by a multiple of
\begin{equs}
{}&(|t-\bar t|  +|s-\bar s|)^{\kappa}
|\ell|^{4\delta -2 -2\kappa}
\int_0^{h} r^{4\delta - 4 -2\kappa} \mrd r 
\\
&\asymp
(|t-\bar t|+|s-\bar s|)^{\kappa}
|\ell|^{4\delta -2 -2\kappa}
h^{4\delta - 3 -2\kappa}
\\
&= (|t-\bar t|+|s-\bar s|)^{\kappa}
|\ell|
|v-\bar v|^{4\delta - 3 -2\kappa}
\;,
\end{equs}
where we again used $4\delta - 4 - 2\kappa > -1$.
On the other hand, using the bound \eqref{eq:C_diff_bound} and the fact that $|r-\bar r||v| \leq |r v - \bar r \bar v|$ for all $r,\bar r \in [0,1]$, the integral over the region $|r-\bar r| > h$ in \eqref{eq:C_diff_int}
is bounded by a multiple of
\begin{equs}
{}
&(|t-\bar t|+|s-\bar s|)^{\kappa}
|\ell|^{4\delta - 3 -2\kappa}|v-\bar v|
\int_{h}^1 r^{4\delta - 5 -2\kappa} \mrd r
\\
&\asymp
(|t-\bar t|+|s-\bar s|)^{\kappa}
|\ell|^{4\delta - 3 -2\kappa}|v-\bar v| h^{4\delta - 4 -2\kappa}
\\
&= (|t-\bar t|+|s-\bar s|)^{\kappa}
|\ell| |v-\bar v|^{4\delta - 3 -2\kappa}
\;.
\end{equs}
In conclusion, we obtain
\begin{equ}
\E \Big| \Big(\int_\ell - \int_{\bar\ell}\Big)(Z_{s,t}-Z_{\bar s,\bar t})\Big|^2 \lesssim
(|t-\bar t|+|s-\bar s|)^{\kappa}
|\ell| |v-\bar v|^{4\delta - 3 -2\kappa}\;.
\end{equ}
\end{proof}

The following lemma is a Kolmogorov-type criterion which is similar to \cite[Lem.~4.11]{CCHS_2D} and \cite[Lem.~3.12]{CCHS_3D} but somewhat simpler.

\begin{lemma}\label{lem:Kolmogorov_A}
Suppose $A$ is a $\CC(\T^3,F)$-valued random variable, where $F$ is a normed space.
Consider $\alpha,\beta \in (0,1]$. Suppose that, for every $p\in [1,\infty)$ there exists $M_p>0$ such that, for all $\ell,\bar\ell\in\CL$,
\begin{equ}[eq:A_ell]
\Big(\E\Big|\int_\ell A\Big|^p\Big)^{1/p}
\leq M_p |\ell|^{\alpha}\;,
\end{equ}
and
\begin{equ}[eq:A_P]
\Big(\E\Big|\Big(\int_\ell-\int_{\bar\ell}\Big)A\Big|^p \Big)^{1/p}
\leq M_p d(\ell,\bar\ell)^{\beta}\;.
\end{equ}
Then, for every $\gamma \in (0,\alpha)$ and $p\in [1,\infty)$, there exists $\lambda > 0$, depending only on $p,\alpha,\beta,\gamma$, such that
\begin{equ}
\big(\E|A|_{\gr{\gamma}}^p\big)^{1/p} \leq \lambda M_p\;.
\end{equ}
\end{lemma}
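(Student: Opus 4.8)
The plan is to run a Kolmogorov-Chentsov / Garsia-Rodemich-Rumsey argument on the metric space $(\CL, d)$, using the two moment bounds \eqref{eq:A_ell} and \eqref{eq:A_P}. First I would observe that $\CL = \T^3 \times \{|v|\le 1/4\}$ is, for the metric $d$, a bounded metric space whose dimension (in the sense of how many balls of radius $r$ are needed to cover it) is finite, say $D$; indeed $d(\ell,\bar\ell) = |x-\bar x|\vee|x+v-(\bar x+\bar v)|$ makes $\CL$ bi-Lipschitz to a subset of $\T^3\times\T^3$ cut down to $|v|\le 1/4$, so one may take $D = 6$. The random field is $\ell \mapsto F_\ell \eqdef \int_\ell A \in F$. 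Bound \eqref{eq:A_P} says $(\E|F_\ell - F_{\bar\ell}|^p)^{1/p}\le M_p d(\ell,\bar\ell)^\beta$, and bound \eqref{eq:A_ell} with $\bar\ell$ taken to be the degenerate segment $(x,0)$ (for which $\int_{\bar\ell}A = 0$ and $d(\ell,(\ell_x,0)) = |\ell|$, $\ell = (\ell_x,v)$) controls $F_\ell$ itself by $M_p|\ell|^\alpha$.

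Next I would apply the classical Kolmogorov criterion on a general metric space (e.g. \cite[Thm.~4.23]{Kallenberg21}, or the GRR lemma as used in the references \cite[Lem.~4.11]{CCHS_2D}, \cite[Lem.~3.12]{CCHS_3D}) to deduce that, for any $p$ large enough that $p\beta > D$ and any $\bar\gamma < \beta - D/p$, there is $\lambda_1 = \lambda_1(p,\beta,D,\bar\gamma)$ with
\begin{equ}
\Big(\E\sup_{\ell\neq\bar\ell}\frac{|F_\ell - F_{\bar\ell}|^p}{d(\ell,\bar\ell)^{p\bar\gamma}}\Big)^{1/p} \le \lambda_1 M_p\;.
\end{equ}
Combined with the pointwise bound on $F_{(\ell_x,0)}$, this gives $(\E\|F\|_{\infty}^p)^{1/p} \lesssim M_p$ and, more importantly, Hölder-type control of $\ell\mapsto F_\ell = \int_\ell A$ in the metric $d$. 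The final step is to convert this into the seminorm $|A|_{\gr\gamma} = \sup_{\ell\in\CL}|\ell|^{-\gamma}|\int_\ell A|$. Here one uses that for a short segment $\ell = (x,v)$ with $|v|$ small, the degenerate segment $\ell_0 = (x,0)$ satisfies $\int_{\ell_0}A = 0$ and $d(\ell,\ell_0) = |v| = |\ell|$, so $|\int_\ell A| = |F_\ell - F_{\ell_0}| \le (\sup d\text{-Hölder seminorm})\cdot|\ell|^{\bar\gamma}$. Choosing $p$ large enough (depending on $\alpha,\beta,\gamma$) so that $\bar\gamma$ can be taken in $(\gamma,\alpha)$ — using $\beta$'s relation to $\alpha$ only through the fact that both exponents are positive, and that \eqref{eq:A_ell} already gives the scaling $|\ell|^\alpha \ge |\ell|^\gamma$ for $|\ell|\le 1$ — yields $|A|_{\gr\gamma}\le \lambda M_p$, with $\lambda$ depending only on $p,\alpha,\beta,\gamma$.

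I do not expect a serious obstacle here: this is a standard chaining argument, and the paper itself flags the lemma as "somewhat simpler" than its predecessors \cite[Lem.~4.11]{CCHS_2D}, \cite[Lem.~3.12]{CCHS_3D}. The one mild subtlety to get right is the metric-geometry bookkeeping: making precise that $(\CL,d)$ has finite doubling dimension $D$, that the "degenerate" segment $(x,0)$ is an admissible comparison point with $\int_{(x,0)}A = 0$ and $d((x,v),(x,0)) = |(x,v)|$, and that taking $p$ large lets $\bar\gamma$ exceed $\gamma$ while staying below $\beta$ (and, via \eqref{eq:A_ell}, below $\alpha$). Once these are in place, the conclusion $\big(\E|A|_{\gr\gamma}^p\big)^{1/p}\le\lambda M_p$ follows from the metric Kolmogorov theorem essentially verbatim.
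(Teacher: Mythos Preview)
There is a genuine gap. Your metric Kolmogorov argument on $(\CL,d)$ using \eqref{eq:A_P} yields a random $d$-H\"older seminorm with exponent $\bar\gamma < \beta - D/p < \beta$, and you then bound $|\int_\ell A| = |F_\ell - F_{\ell_0}| \lesssim |\ell|^{\bar\gamma}$. But the lemma asks for $\gamma$ ranging over all of $(0,\alpha)$, with \emph{no} assumed relation between $\alpha$ and $\beta$; in particular nothing prevents $\gamma \ge \beta$, in which case your $\bar\gamma < \beta$ can never be pushed into $(\gamma,\alpha)$ however large $p$ is. This is not a corner case: in the paper's application (Lemma~\ref{lem:Z_diff_moment} feeding into Theorem~\ref{thm:Z_Holder}) one has $\alpha = 2\delta-1-\kappa$ and $\beta = 2\delta-3/2-\kappa = \alpha - 1/2$, and the $\gamma$ actually used (e.g.\ $\gamma = 1/2+\eps$ with $\delta = 1-\eps$) lies strictly between $\beta$ and $\alpha$.

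The paper's proof circumvents this by a two-scale chaining that genuinely uses \emph{both} hypotheses in the chain, not just \eqref{eq:A_P} for increments and \eqref{eq:A_ell} for a base point. It introduces a parameter $\omega\ge 1$ and, for a segment of length $|\ell|\asymp 2^{-N/\omega}$, starts the dyadic approximation at $d$-scale $2^{-N}$ (i.e.\ at scale $|\ell|^\omega \ll |\ell|$). The first term in the chain is controlled by \eqref{eq:A_ell} and contributes $2^{N(6 - p(\alpha-\gamma)/\omega)}$; the increments are controlled by \eqref{eq:A_P} and contribute $2^{N(12 - p(\beta - \gamma/\omega))}$. Taking $\omega > \gamma/\beta$ makes $\beta - \gamma/\omega > 0$, after which large $p$ makes both series converge. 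Your proposal corresponds to $\omega=1$, which forces $\gamma<\beta$; the extra parameter is precisely what decouples the two exponents.
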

\begin{proof}
It suffices to consider $\beta\leq\alpha$.
For $N \geq 1$ let $D_{N}$ denote the set of line segments in $\CL$
whose start and end points have dyadic coordinates of scale $2^{-N}$,
and let $D=\cup_{N\geq 1} D_{N}$.
Consider $\omega \geq 1$.
For $\ell\in D$, let $k\leq 0$ be an integer such that $|\ell|\asymp 2^{k/\omega}$.
Then there exist $m\geq 0$ finite and $\ell_{k+i} \in D_{k+i}$ for $0\leq i \leq m$ such that
$\int_\ell A = \int_{\ell_k}A + \sum_{i=1}^m (\int_{\ell_{k+i}}-\int_{\ell_{k+i-1}})A$
and such that $|\ell_{k+i}|\asymp |\ell|$ and
$d(\ell_{k+i-1},\ell_{k+i}) \leq K2^{(-k-i)}$ for a constant $K>0$.
Using the elementary bound $\sum_{i\geq 0} a_i \lesssim \sup_{i\geq 0} 2^{\gamma i/\omega} a_i $ for $a_i\geq 0$ and $\gamma\in(0,\alpha)$,
it follows that
\begin{equ}\label{eq:A_dyadic_bound}
\sup_{\ell\in D} \frac{|\int_\ell A|}{|\ell|^{\gamma}}
\lesssim
\sup_{N\geq 1} \sup_{\substack{a\in D_{N}\\|a|\leq K2^{-N/\omega}}}
\frac{|\int_a A|}{2^{-\gamma N/\omega}}
+
\sup_{N\geq 1} \sup_{\substack{a,b \in D_{N}\\d(a,b)\leq K2^{-N}}}
\frac{|(\int_a - \int_b)A|}{2^{-\gamma N/\omega}}\;.
\end{equ}
Observe that $|D_{N}| \asymp 2^{6N}$.
Therefore,
raising both sides of~\eqref{eq:A_dyadic_bound} to the power $p$ and replacing the suprema on the right-hand side by sums, we obtain from \eqref{eq:A_ell}-\eqref{eq:A_P}
\begin{equ}
\E
\Big|\sup_{\ell\in D} \frac{|\int_\ell A|}{|\ell|^{\gamma}}
\Big|^p
\lesssim
M_p^p\sum_{N \geq 1} \{2^{N(6 -p(\alpha-\gamma)/\omega)}
+
2^{N(12 - p(\beta - \gamma / \omega))}
\}
\;.
\end{equ}
We now take $\omega$ sufficiently large so that $\beta - \gamma/\omega >0$,
and then $p\geq 1$ sufficiently large such that
$6-p(\alpha-\gamma)/\omega < 0$ and $12-p(\beta -\gamma/\omega)<0$.
This ensures that the sum above is finite and the conclusion follows by continuity of $A$.
\end{proof}

\section{Expectation of Wilson loops}
\label{sec:expectation-loop}

We finally take $Z$ as the random model associated to the 3D SYMH equations and a white noise $\xi$ (with the ultraviolet cutoff removed).
In particular, $\Psi$ from \eqref{eq:Psi} solves the SHE $\d_t \Psi = \Delta\Psi + \xi$ with $\Psi_0=0$.

We fix parameters $\alpha,\theta,\gamma,\delta$ satisfying \eqref{eq:init} and $\nu \in (0,1-\delta)$ satisfying \eqref{e:cond-nu}.
Recall $\eta,\mu$ from \eqref{eq:init}
and $\lambda<0$ from \eqref{eq:lambda}.
We further suppose that $\eta>1-2\delta$ and
$\gamma < 2\delta-1$ and $\lambda \in (-1/8,0)$.
Finally, consider $\bar\eta \in [2\theta(\alpha-1),0)$ such that $\eta+\bar\eta>1-2\delta$.

\begin{example}
A possible choice of parameters is
\[
\alpha=\frac12-\eps, \quad \theta=\eps, \quad 
\gamma=\frac12+\eps, \quad \delta=1-\eps,\quad \nu=-\bar\eta = \eps/2
\]
 for $\eps>0$ sufficiently small.
\end{example}

Note that these parameters satisfy the conditions of Lemma \ref{lem:quadnorm_pert} and Corollary~\ref{cor:SHE_quadnorm}
and fall into the setting of Sections \ref{sec:Fs} and \ref{sec:Wilson}.
Moreover, by~\cite[Lem. 2.25]{CCHS_3D},
since $\bar\eta\geq 2\theta(\alpha-1)$,
\begin{equ}[eq:heatgr_CC_bound]
\heatgr{f}_{\alpha,\theta} \lesssim |f|_{\CC^{\bar\eta}}\;.
\end{equ}

Recall from \cite[Lem. 5.18]{CCHS_3D} that 
there exists a space-time distribution $\Psi\d\Psi$ such that \begin{equ}[eq:Psi_d_Psi_def]
\CP \star (\Psi \d \Psi) = \lim_{\eps\downarrow0}\CP \star (\Psi_\eps \d \Psi_\eps)
\;,
\end{equ}
where $\Psi_\eps= \CP\star \bone_+ \xi^\eps$, $\xi^\eps=\moll^\eps*\xi$, for a non-anticipative mollifier $\moll$,
and the limit holds in probability in $\CC^{\kappa}([0,T],\CC^{\kappa})$ for $\kappa>0$ small.

\begin{lemma}\label{lem:SHE_t_eps}
There exists $\eps>0$ such that, for all $p\in [1,\infty)$, uniformly in $t\in (0,1)$,
\begin{equ}
\E \heatgr{\Psi_t}_{\alpha,\theta}^p + \E |\CP_t \star (\Psi \d \Psi)|_{\CC^{\bar\eta}}^p
+ \E \quadnorm{\Psi}^p_{\gamma,\delta} = O(t^{p\eps})\;.
\end{equ}
\end{lemma}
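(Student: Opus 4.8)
The claim has three pieces, all of which are about the Gaussian process $\Psi$ solving the stochastic heat equation with zero initial condition. The strategy is to estimate each of the three moments separately, extracting a positive power $t^{p\eps}$ from the vanishing of $\Psi$ at time $t=0$.

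First I would handle the term $\E\heatgr{\Psi_t}_{\alpha,\theta}^p$. By definition $\heatgr{\Psi_t}_{\alpha,\theta} = \sup_{s\in(0,1)} |P_s\Psi_t|_{\gr\alpha;<s^\theta}$, which is controlled by a Kolmogorov-type argument analogous to (but cruder than) Lemma~\ref{lem:Kolmogorov_A}: one needs, for line segments $\ell$ with $|\ell|<s^\theta$, moment bounds on $\int_\ell P_s\Psi_t$ and its increments in $\ell$. Since $\Psi_t$ solves the SHE with $\Psi_0=0$, by scaling/covariance one has $\E|\Psi_t|_{\CC^{\bar\eta'}}^p \lesssim t^{p\kappa'}$ for $\bar\eta' < -1/2$ and $\kappa' = (\bar\eta'-\eta)/2 > 0$ (a standard Schauder-type computation on the SHE covariance, exactly as in \cite[Sec.~2]{CCHS_3D}); combining with $\heatgr{f}_{\alpha,\theta}\lesssim |f|_{\CC^{\bar\eta}}$ from \eqref{eq:heatgr_CC_bound} and interpolating between two choices of $\bar\eta$ gives a strictly positive power of $t$, provided the parameters leave enough room (which holds for the sample parameters in the Example, for $\eps$ small).

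Second, the term $\E|\CP_t\star(\Psi\d\Psi)|_{\CC^{\bar\eta}}^p$. Here $\CP_t\star(\Psi\d\Psi)$ is the time-$t$ slice of the space-time distribution constructed in \eqref{eq:Psi_d_Psi_def}, which is known to lie in $\CC^\kappa([0,T],\CC^\kappa)$ for small $\kappa>0$. Since this process again vanishes at $t=0$ (as $\CP_0\star(\cdot)=0$ by our convention), its $\CC^\kappa$-in-time regularity immediately yields $|\CP_t\star(\Psi\d\Psi)|_{\CC^\kappa} \lesssim t^\kappa\cdot(\text{time-H\"older seminorm})$, and taking $\bar\eta \leq \kappa$ (which holds for the sample parameters since $\bar\eta<0<\kappa$) together with finiteness of all moments of the $\CC^\kappa([0,T],\CC^\kappa)$-norm of $\Psi\d\Psi$ (established in \cite[Lem.~5.18]{CCHS_3D}, by equivalence of moments in a fixed Wiener chaos) gives the desired $O(t^{p\eps})$ bound.

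Third, the term $\E\quadnorm{\Psi}_{\gamma,\delta}^p$: this is exactly Corollary~\ref{cor:SHE_quadnorm}, which already gives $\E|\sup_{t\in[0,1]} t^{-\bar\kappa}\quadnorm{\Psi_t}_{\gamma,\delta}|^p<\infty$ for some $\bar\kappa>0$ (note $\quadnorm{\Psi}_{\gamma,\delta} = \quadnorm{\Psi_0}_{\gamma,\delta} = 0$ with the notation of Definition~\ref{def:quadnorm} applied at time $0$, but the statement presumably intends the supremum over $t$, matching Corollary~\ref{cor:SHE_quadnorm}); in any case this term is already handled and does not even need a positive power of $t$. Finally I would choose $\eps>0$ small enough that it is simultaneously smaller than the exponents produced by the first two pieces, and the conclusion follows by combining the three bounds. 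The main obstacle is the first piece: one must verify that the Kolmogorov argument for $\heatgr{\Psi_t}_{\alpha,\theta}$ can be run uniformly in $t$ while still extracting a positive power of $t$ — i.e. that the $t\downarrow 0$ gain survives the double supremum over scales $s$ and segments $\ell$ — but since $\heatgr{\cdot}_{\alpha,\theta}\lesssim|\cdot|_{\CC^{\bar\eta}}$ reduces everything to a single Besov norm of $\Psi_t$, this reduces to the standard fact that $\E|\Psi_t|_{\CC^{\bar\eta}}^p$ vanishes polynomially as $t\downarrow 0$ whenever $\bar\eta$ is taken strictly below the stationary regularity $-1/2$.
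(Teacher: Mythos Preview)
Your treatment of the second and third terms matches the paper's: both cite \cite[Lem.~5.18]{CCHS_3D} for $\CP_t\star(\Psi\d\Psi)$ and Corollary~\ref{cor:SHE_quadnorm} for $\quadnorm{\Psi_t}_{\gamma,\delta}$. (Note that the third term \emph{does} need the positive power of $t$, and Corollary~\ref{cor:SHE_quadnorm} supplies exactly that via the exponent $\bar\kappa$.)

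There is, however, a genuine gap in the first term. You propose to reduce $\heatgr{\Psi_t}_{\alpha,\theta}$ to a Besov norm via \eqref{eq:heatgr_CC_bound}, namely $\heatgr{f}_{\alpha,\theta}\lesssim|f|_{\CC^{\bar\eta}}$, and then use that $\E|\Psi_t|_{\CC^{\bar\eta}}^p$ vanishes polynomially when $\bar\eta<-\tfrac12$. But \eqref{eq:heatgr_CC_bound} requires $\bar\eta\geq 2\theta(\alpha-1)$, and under \eqref{eq:init} one always has $2\theta(\alpha-1)>-\tfrac13$ (indeed $\eta=(1+2\theta)(\alpha-1)>-\tfrac23$ forces $\theta<\tfrac16$, hence $|2\theta(\alpha-1)|<\tfrac13$). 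So any admissible $\bar\eta$ in \eqref{eq:heatgr_CC_bound} satisfies $\bar\eta>-\tfrac12$, and $\Psi_t\notin\CC^{\bar\eta}$ for such $\bar\eta$: the 3D SHE has spatial regularity only $-\tfrac12-$. No interpolation between two choices of $\bar\eta$ rescues this, because there is no overlap between the range where \eqref{eq:heatgr_CC_bound} holds and the range where $|\Psi_t|_{\CC^{\bar\eta}}$ is finite.

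The point is that $\heatgr{\cdot}_{\alpha,\theta}$ is genuinely finer than any Besov norm that $\Psi_t$ lives in; the restriction to short line segments ($|\ell|<s^\theta$) is what makes it finite on $\Psi_t$. The paper therefore does not attempt any Besov reduction and instead cites \cite[Prop.~3.7]{CCHS_3D}, which runs the Kolmogorov argument \emph{directly} on moments of line integrals $\int_\ell P_s\Psi_t$ (together with increments in $s,t$), yielding $\E\heatgr{\Psi_t}_{\alpha,\theta}^2=O(t^{2\eps})$; Gaussian hypercontractivity then gives all moments. You actually gesture at this Kolmogorov route in your first sentence on this term, but then abandon it for the Besov shortcut---that shortcut does not work, and the direct line-integral argument is unavoidable.
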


\begin{proof}
By \cite[Prop.~3.7]{CCHS_3D}, there exists $\eps>0$ such that $\E \heatgr{\Psi_t}^2_{\alpha,\theta} = O(t^{2\eps})$.
In particular, $t^{-\eps}\heatgr{\Psi_t}_{\alpha,\theta}$ has a Gaussian tail uniform in $t\in(0,1)$.
Furthermore, by \cite[Lem. 5.18]{CCHS_3D}, there exists $\eps>0$ such that $\E |\CP_t \star (\Psi \d \Psi)|_{\CC^{\bar\eta}}^p=O (t^{p\eps})$.
The claimed bound on $\E \quadnorm{\Psi}^p_{\gamma,\delta}$ is due to Corollary \ref{cor:SHE_quadnorm}.
\end{proof}

Fix henceforth $\kappa>0$ as in Section \ref{sec:SPDE} and $\eps>0$ as in Lemma~\ref{lem:SHE_t_eps}.
For $M>2$ and $t\in(0,1)$, consider the event
\begin{equs}
Q_t &= \{ \|Z\|_{\frac32+2\kappa;O} + |\CP\star \bone_+\xi|_{\CC([-1,3],\CC^{-1/2-\kappa})}
+ |\CP\star (\Psi\d\Psi)|_{\CC([-1,3],\CC^{-2\kappa})}
\\
&\quad
+ t^{-\eps}\heatgr{\Psi_t}_{\alpha,\theta}
+ t^{-\eps} | \CP_t \star (\Psi \d \Psi)|_{\CC^{\bar\eta}} + t^{-\eps}\quadnorm{\Psi_t}_{\gamma,\delta} < M\}
\end{equs}
where $O=[-1,2]\times\T^3$.
Consider initial conditions $X(0),h(0)$ with $|X(0)|_{\CC^3}+|h(0)|_{\CC^3}\lesssim 1$.
Similar to Notation \ref{not:t},
let $\tau = M^{-q}$ for $q\geq 1$ sufficiently large such that SYMH admits a solution on $[0,\tau]$
on the event
\[
\{\|Z\|_{\frac32+2\kappa;O} +  |\CP\star \bone_+\xi|_{\CC([-1,3],\CC^{-1/2-\kappa})}
+ |\CP\star (\Psi\d\Psi)|_{\CC([-1,3],\CC^{-2\kappa})} < M \} \supset Q_t\;,
\]
 which does not depend on $t$.

We henceforth consider $t\in (0,\tau)$.
Let $X=(A,\Phi)$ and $\tilde X = (\tilde A,\tilde\Phi)$ be reconstructions at time $t$ of $\CX$ and $\tilde\CX$ as in Section \ref{sec:Wilson} for generic smooth $X(0)$ and $h(0)$ with $|X(0)|_{\CC^3}+|h(0)|_{\CC^3}\lesssim 1$.
Recall the line $\ell \colon [0,1]\to\T^3$, $\ell(x)=(x,0,0)$.
The following is the main result of this section.

\begin{lemma}\label{lem:E_W_ell}
For all $r>0$ sufficiently small, there exists $\beta>0$ such that, for $t\ll 1$, $s=t^{\beta}$, and $|X(0)|_{L^\infty} \lesssim t^{r}$,
  \begin{equs}\label{eq:E_W_diff}
{}&\E W_\ell (\CF_s \tilde A) \bone_{Q_t}
- \E W_\ell(\CF_s A)\bone_{Q_t}
=
\P[Q_t] \Big\{ t\Trace\Big( \int_\ell c h(0) \Big)
\\
&\quad + t \Trace \Big( \int_{[0,1]^2} \mrd \ell_{A(0)}(x_1) \mrd \ell_{c h(0)}(x_2) \Big)
+ O(t^{1+r+ \beta/6} + t^{1+3r/2} + t^{1+r + \nu \beta}) \Big\}\;,
\end{equs}
where the proportionality constants are $O(M^k)$ for some $k\geq 0$.
\end{lemma}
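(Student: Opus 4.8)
The plan is to start from the deterministic expansion \eqref{eq:reg_W_ell} of Lemma~\ref{lem:Wilson_expansion}, multiply through by $\bone_{Q_t}$, take expectations, and then estimate each of the error terms in \eqref{eq:reg_W_ell} on the event $Q_t$ using the definition of $Q_t$ together with Lemma~\ref{lem:SHE_t_eps}. First I would fix $\nu\in(0,1-\delta)$ satisfying \eqref{e:cond-nu} and, crucially, note that on $Q_t$ one has $\SNorm(A)\lesssim t^{r}+t^{\eps}\lesssim 1$; indeed by Lemma~\ref{lem:a_remainder} and \eqref{eq:heatgr_CC_bound}, $\heatgr{A}_{\alpha,\theta}\lesssim |A(0)|_{\CC^{\bar\eta}}+\heatgr{\Psi_t^\ym}_{\alpha,\theta}+\heatgr{\CP_t\star(\Psi\d\Psi)^\ym}_{\alpha,\theta}+t^{1/4-3\kappa/2}\lesssim t^{r}+Mt^{\eps}$, and by Lemma~\ref{lem:quadnorm_pert} applied to $A=A(0)+\Psi_t^\ym+(\text{lower order})$ together with $\quadnorm{A(0)}_{\gamma,\delta}\lesssim |A(0)|_{\CC^\infty}$, one controls $\quadnorm{A}_{\gamma,\delta}\lesssim t^r + Mt^\eps$. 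In particular $\SNorm(A)$ is $O(1)$ on $Q_t$ (so $s\ll\Poly(\SNorm(A)^{-1})$ is automatic for $t$ small once $s=t^\beta$), and moreover $u=s^\lambda\heatgr{A}_{\alpha,\theta}\lesssim t^{\lambda\beta}(t^r+Mt^\eps)$.

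The core of the argument is then the choice of $\beta$. I would impose $\beta$ small enough that $\lambda\beta$ (recall $\lambda\in(-1/8,0)$) is dominated by the gains $r$ and $\eps$, so that $u\lesssim t^{r/2}$ say, hence $u\to 0$ and the pure-$u$ terms $t(u^2+u^{L-1})$, $s^{\nu L}$, $u^L$, $u^{L+1}$, $t^2 u$ in \eqref{eq:reg_W_ell} are all $O(t^{1+r+\delta'})$ for some $\delta'>0$ once $L$ is taken large (this uses $L\geq 4$ is at our disposal). Next, the terms $ts^\lambda\heatgr{\Psi_t^\ym}_{\alpha,\theta}$ and $ts^\lambda\heatgr{\CP_t\star(\Psi\d\Psi)^\ym}_{\alpha,\theta}$ are, on $Q_t$, bounded by $M\,t\,t^{\lambda\beta}t^\eps$, so choosing $\beta$ with $|\lambda|\beta<\eps$ makes these $O(t^{1+\eps-|\lambda|\beta})=O(t^{1+r+\beta/6})$ after also arranging $\eps-|\lambda|\beta\geq r+\beta/6$ — concretely one picks $r$ first (small), then $\beta$ a suitably small multiple of $r$; the precise bookkeeping is where I would be careful but it is routine. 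The remaining terms are handled directly: $t^{5/4-7\kappa/2}=O(t^{1+r+\beta/6})$ for $t$ small since $5/4-7\kappa/2>1+r+\beta/6$ when $r,\beta,\kappa$ small; $ts=t^{1+\beta}$ absorbed; $|A(0)|_{L^\infty}s^{\eta/2+1/2}t\lesssim t^{r}t^{\beta(\eta/2+1/2)}t=t^{1+r+\beta(\eta+1)/2}$ which is fine since $\eta>-1$; and $s^\nu t(\SNorm(A)+\SNorm(A)^3)\lesssim t^{1+\nu\beta}$, which matches the advertised $t^{1+r+\nu\beta}$ once one checks $\nu\beta\geq r+\nu\beta$ is \emph{not} what is needed — rather one simply keeps this term as the stated $O(t^{1+r+\nu\beta})$ (note $|X(0)|_{L^\infty}\lesssim t^r$ is used inside $\SNorm(A)$, giving an extra $t^r$, so this is really $O(t^{1+r+\nu\beta})$; alternatively without that gain it is $O(t^{1+\nu\beta})$ which is stronger — I would keep the form stated in the lemma for compatibility with Section~\ref{sec:initial}).

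Finally, the two main terms $t\Trace\int_\ell c h(0)$ and $t\Trace\int_{[0,1]^2}\mrd\ell_{A(0)}\mrd\ell_{ch(0)}$ in \eqref{eq:reg_W_ell} are deterministic (they do not involve $\Psi$, $Z$, or the randomness), so multiplying by $\bone_{Q_t}$ and taking expectation simply produces the factor $\P[Q_t]$, as in the statement; likewise every $O(\cdots)$ term is multiplied by $\bone_{Q_t}$ and bounded pointwise on $Q_t$ by a power $M^k$ times the stated power of $t$, then $\E[M^k\bone_{Q_t}\cdot(\text{power of }t)]\leq M^k(\text{power of }t)$. The main obstacle is purely combinatorial rather than analytic: simultaneously choosing $r$ small, then $\beta=\beta(r)$ small enough that \emph{all} of the competing exponents $\lambda\beta$, $\nu\beta$, $\beta(\eta+1)/2$, $5/4-7\kappa/2-1$, etc.\ line up to give a uniform bound of the claimed shape $O(t^{1+r+\beta/6}+t^{1+3r/2}+t^{1+r+\nu\beta})$ — I would organise this by first using $Q_t$ and Lemma~\ref{lem:SHE_t_eps} to replace every stochastic quantity by $M\,t^\eps$, then treat the resulting finite list of monomials $t^{1+a\beta+b r}$ and verify each exponent exceeds one of the three target exponents, shrinking $\beta$ as needed (finitely many constraints). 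The role of $t^{1+3r/2}$ specifically is to absorb the term $ts^\lambda\heatgr{\Psi}$-type contributions \emph{if} one is forced to take $|A(0)|_{L^\infty}\asymp t^r$ without a matching $\eps$-gain; keeping it in the statement gives the flexibility needed in Section~\ref{sec:initial}.
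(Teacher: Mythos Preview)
Your approach is the same as the paper's: restrict to $Q_t$, bound $\SNorm(A)$ via Lemma~\ref{lem:a_remainder}, \eqref{eq:heatgr_CC_bound} and Lemma~\ref{lem:quadnorm_pert}, choose $\beta$ as a small multiple of $r$, and then check each error term from Lemma~\ref{lem:Wilson_expansion}. Two quantitative points need tightening before the bookkeeping actually closes.

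First, your illustrative choice $u\lesssim t^{r/2}$ is not sufficient: it yields only $tu^2\lesssim t^{1+r}$, which is \emph{not} dominated by any of the three target error terms $t^{1+r+\beta/6}$, $t^{1+3r/2}$, $t^{1+r+\nu\beta}$ and is exactly the order of the main term you need to isolate in Section~\ref{sec:initial}. The paper fixes this by first imposing $r<\eps/2$ (so that on $Q_t$ one has $\heatgr{A}_{\alpha,\theta}\lesssim t^r$ rather than merely $t^r+Mt^\eps$) and then making the explicit choice $\beta=-r/(4\lambda)$, giving $s^\lambda=t^{-r/4}$ and $u\lesssim t^{3r/4}$, hence $tu^2\lesssim t^{1+3r/2}$. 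The same sharp choice also places $ts^\lambda\heatgr{\Psi_t^\ym}$ and $ts^\lambda\heatgr{\CP_t\star(\Psi\d\Psi)^\ym}$ into the $t^{1+3r/2}$ bucket directly, without the awkward constraint $\eps-|\lambda|\beta\geq r+\beta/6$ you impose.

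Second, your parenthetical ``without that gain it is $O(t^{1+\nu\beta})$ which is stronger'' has the inequality backwards: for small $t$, $t^{1+\nu\beta}$ is \emph{larger} than $t^{1+r+\nu\beta}$, so this is a weaker bound, and the extra $t^r$ from $\SNorm(A)\lesssim t^r$ is genuinely needed to match the stated error.
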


\begin{proof}
Assume we are on the event $Q_t$.
Recall $u=s^\lambda \heatgr{A}_{\alpha,\theta}$, $\lambda < 0$, and $\eps>0$ from Lemmas~\ref{lem:Wilson_expansion} and~\ref{lem:SHE_t_eps}.
Let us take $r < \frac{\eps}{2} \wedge\frac{1}{10}$.
It follows from \eqref{eq:heatgr_CC_bound}, the expansion of $A$ from \eqref{eq:a_expansion}, and $|A(0)|_{L^\infty}\lesssim t^r$,
that
\begin{equ}[eq:heatgr_a]
\heatgr{A}_{\alpha,\theta} \leq |A(0)|_\infty + \heatgr{\Psi_t}_{\alpha,\theta} + \heatgr{\CP_t \star (\Psi \d \Psi)}_{\alpha,\theta} + O(t^{1/4-3\kappa/2}) \lesssim t^{r}\;.
\end{equ}
We now take
\begin{equ}
\beta=-\frac{r}{4\lambda}>0\;,
\end{equ}
so that $s^\lambda = t^{\lambda \beta} = t^{-r/4}$
and thus
\begin{equ}[e:u-to-t]
u = s^{\lambda} \heatgr{A}_{\alpha,\theta} \lesssim s^\lambda t^{r}
= t^{3r/4}\;.
\end{equ}
Since $\eta> -2/3$, we have $s^{\eta/2+1/2} \leq s^{1/6} = t^{\beta/6}$.

Recall that $|f|_{\CC^\eta} \lesssim \heatgr{f}_{\alpha,\theta}$ due to \eqref{eq:eta_heatgr}.
Therefore, by \eqref{eq:a_expansion} and Lemma \ref{lem:quadnorm_pert},
\begin{equ}[eq:quadnorm_a]
\quadnorm{A}_{\gamma,\delta} \lesssim \quadnorm{\Psi_t}_{\gamma,\delta} + |\Psi|_{\CC^\eta} \{|A(0)|_{L^\infty} + |\CP_t (\Psi\d \Psi)|_{\CC^{\bar\eta}} + O(t^{1/4-3\kappa/2})\}
\lesssim t^r\;.
\end{equ}
Hence, combining \eqref{eq:heatgr_a} and \eqref{eq:quadnorm_a}, we have $\SNorm(A)\lesssim t^{r}$.
Note that our choice $s = t^\beta$ in particular satisfies the condition $s\ll 1\wedge \Poly(\SNorm(A)^{-1})$ of Lemma \ref{lem:Wilson_expansion} for all $t>0$ sufficiently small.

The conclusion now follows from Lemma \ref{lem:Wilson_expansion} by taking $L$ sufficiently large so that all the errors in \eqref{eq:reg_W_ell} are bounded 
as
\begin{equs}
t^{5/4-7\kappa/2} + ts & \lesssim t^{1+3r/2} \;,
\\
 ts^\lambda \big( \heatgr{\Psi_t}_{\alpha,\theta} + \heatgr{\CP_t \star (\Psi \d \Psi)}_{\alpha,\theta} \big) 
& \lesssim 
 s^\lambda t^{1+\eps}
 \lesssim t^{1+3r/2} \;,
 \\
 |A(0)|_{L^\infty} s^{\eta/2+\frac12} t
&\lesssim t^{r+1} t^{(\frac\eta2+\frac12)\beta}
\lesssim 
t^{1+r+\beta/6} \;, 
\\
s^\nu t(\SNorm(A)+\SNorm(A)^3)
&\lesssim t^{1+r+\nu \beta}\;,
\\
 t(u^2 + u^{L-1})
&\lesssim
t^{1+3r/2} \;,
\end{equs}
where we used $\eta>-2/3$, \eqref{e:u-to-t},
and $\lambda \in (-1/8,0)$ thus $s=t^\beta < t^{2r}$.
The terms $s^{\nu L} + u^L + u^{L+1} + t^2 u$ in \eqref{eq:reg_W_ell} 
are clearly even smaller.
\end{proof}

\section{Finishing the proof}
\label{sec:initial}

In this section, we conclude the proof of Proposition \ref{prop:A_tilde_A}.
Given the ingredients in the previous sections, the proof is close in spirit to that of
\cite[Prop.~8.5]{chevyrev2023invariant},
but has a few important differences.
In general, our goal is to argue that the right-hand side of \eqref{eq:E_W_diff} in Lemma~\ref{lem:E_W_ell}
indeed gives us a sufficient lower bound. 
To achieve such a lower bound, we need to use the
two explicit terms on the right-hand side of \eqref{eq:E_W_diff};
these are the  `good terms' advertised in Section~\ref{sec:idea}.
In particular,
we will show that for $t$ small, if $\Trace \big( \int_\ell c h(0) \big)\neq 0$, then
 we obtain a lower bound of order $t$;
and if $\Trace \big( \int_\ell c h(0) \big)=0$, in which case the strategy will have some key difference with \cite{chevyrev2023invariant}, then we obtain a lower bound of order $t^{1+r}$ from the second term on the right-hand side of \eqref{eq:E_W_diff}.
The difference arises from the following reason.
In \cite[Prop.~8.5]{chevyrev2023invariant}, since the space dimension is two, 
the `big-O' remainder term therein is $O(t^{2+})$ which is much smaller than  $O(t^{1+r+})$ here.
This allows one to simply choose $A(0)=0$ therein and then a term {\it quadratic} in $h(0)$ is of order $t^2$ which  dominates the $O(t^{2+})$ remainder.
In the argument here we have to be more delicate and make use of the {\it cross-term} between $A(0)$ and $h(0)$,
and in fact we will take non-zero $A(0)$, specifically
$A(0) = t^r c h(0)$, with which we obtain a term with an explicit lower bound of order $t^{1+r}$.

Suppose we are in the setting of Proposition \ref{prop:A_tilde_A}. We also follow the setting and notation of Section \ref{sec:expectation-loop}.
We recall (a special case of) \cite[Lem.~B.2]{chevyrev2023invariant},
which follows from the Chow--Rashevskii theorem.

\begin{lemma}\label{lem:curve_selection}
Consider non-zero $j \in L(\mfg,\R)$.
Then there exists $\zeta \in \CC^\infty([0,1], \mfg)$
such that
\begin{enumerate}[label=(\roman*)]
\item $\dot\zeta=0$ on $[0,\frac14]$ and $[\frac34,1]$,
\item
$\zeta(0)=0$ and $j\zeta(1)\neq 0$,
\item\label{pt:targets}
$L^\zeta(1)=\id$ where $L^\zeta\in\CC^\infty([0,1],G)$ solves
$
\mrd L^\zeta = (\mrd \zeta) L^\zeta$, $L^\zeta(0)=\id
$.
\end{enumerate}
\end{lemma}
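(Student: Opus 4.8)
The statement is a packaged version of the Chow--Rashevskii theorem, and the plan is to deduce it from the reference \cite[Lem.~B.2]{chevyrev2023invariant} together with a short surgery argument to arrange the extra cosmetic conditions (i) and the normalisation in (iii). First I would recall the relevant sub-Riemannian picture: on the Lie group $G$ with Lie algebra $\mfg$, left-invariant curves of the form $\mrd L^\zeta = (\mrd\zeta) L^\zeta$ with $\zeta$ absolutely continuous trace out exactly the horizontal paths for the trivial (full) distribution, so the endpoint map $\zeta \mapsto L^\zeta(1)$ is surjective onto $G$ (indeed onto any neighbourhood of $\id$, and by connectedness onto all of $G$); this is the content invoked in \cite{chevyrev2023invariant}. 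The key point is that the fibre over $\id\in G$ is infinite-dimensional, so inside it we still have freedom to impose the linear inequality $j\zeta(1)\neq 0$.

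The core step is as follows. Since $j\neq 0$, pick $w\in\mfg$ with $jw\neq 0$. Consider the one-parameter family $\zeta_0^\epsilon(x)=\epsilon x w$ on $[0,1]$; this has $\zeta_0^\epsilon(0)=0$, $j\zeta_0^\epsilon(1)=\epsilon jw\neq 0$ for $\epsilon\neq 0$, but in general $L^{\zeta_0^\epsilon}(1)=\exp(\epsilon w)\neq\id$. To correct the endpoint, concatenate with a second smooth arc $\xi^\epsilon$ on $[1/2,1]$ (reparametrising so $\zeta_0^\epsilon$ lives on $[0,1/2]$) chosen so that the total holonomy returns to $\id$: explicitly, solving $\mrd L = (\mrd\xi)L$ from $\exp(\epsilon w)$ back to $\id$ is possible by surjectivity of the endpoint map from any starting point, and moreover by the controllability statement one can choose $\xi^\epsilon$ with $\xi^\epsilon(1)-\xi^\epsilon(1/2)=O(\epsilon)$ — in particular, taking $\epsilon$ small, the contribution of $\xi^\epsilon$ to the final value $\zeta(1)$ is $O(\epsilon)$, so $j\zeta(1)=\epsilon jw + O(\epsilon^2)\neq 0$ for $\epsilon$ small. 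This yields a curve $\zeta=\zeta^\epsilon$ with $\zeta(0)=0$, $j\zeta(1)\neq 0$, and $L^\zeta(1)=\id$. Alternatively, and more cleanly, one may simply quote \cite[Lem.~B.2]{chevyrev2023invariant} directly if it already provides a $\zeta$ with $\zeta(0)=0$, $j\zeta(1)\neq 0$ and $L^\zeta(1)=\id$, and spend the rest of the proof only on point (i).

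To obtain the flatness condition (i) — $\dot\zeta=0$ near $0$ and near $1$ — I would reparametrise: given any $\zeta$ as above defined on $[0,1]$, let $\phi\colon[0,1]\to[0,1]$ be a smooth nondecreasing surjection with $\phi\equiv 0$ on $[0,1/4]$ and $\phi\equiv 1$ on $[3/4,1]$, and set $\tilde\zeta = \zeta\circ\phi$. Then $\dot{\tilde\zeta}=\dot\phi\,(\dot\zeta\circ\phi)$ vanishes on $[0,1/4]\cup[3/4,1]$, while $\tilde\zeta(0)=\zeta(0)=0$ and $\tilde\zeta(1)=\zeta(1)$, so (ii) is preserved; and since $L^{\tilde\zeta}$ solves the same ODE along a reparametrised path, $L^{\tilde\zeta}(1)=L^\zeta(1)=\id$, so (iii) is preserved. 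Smoothness of $\tilde\zeta$ is immediate as $\zeta,\phi\in\CC^\infty$. The main (and essentially only) obstacle is the controllability input — ensuring the corrective arc $\xi^\epsilon$ can be taken with endpoint displacement $O(\epsilon)$ — but this is precisely the quantitative form of Chow--Rashevskii already used in \cite{chevyrev2023invariant}, so in the write-up I would simply cite \cite[Lem.~B.2]{chevyrev2023invariant} for the existence of a base curve and present only the reparametrisation surgery in detail.
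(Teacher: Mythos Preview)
Your final recommended approach---cite \cite[Lem.~B.2]{chevyrev2023invariant} for a curve satisfying (ii) and (iii), then reparametrise by a smooth monotone surjection $\phi\colon[0,1]\to[0,1]$ flat near the endpoints to obtain (i)---is correct and is exactly what the paper does (the paper simply records the lemma as a special case of \cite[Lem.~B.2]{chevyrev2023invariant} without further argument, so your reparametrisation surgery is even more detailed than what appears there).

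However, your ``direct construction'' alternative has a genuine gap and you should not attempt to use it. You write that the corrective arc $\xi^\epsilon$ can be chosen with displacement $\xi^\epsilon(1)-\xi^\epsilon(1/2)=O(\epsilon)$, and then in the next breath conclude $j\zeta(1)=\epsilon jw + O(\epsilon^2)$. The second estimate does not follow from the first: with only $O(\epsilon)$ control on the corrective displacement you get $j\zeta(1)=\epsilon jw + O(\epsilon)$, which gives no information. Concretely, the most natural corrective arc---running straight back from $\exp(\epsilon w)$ to $\id$ along $t\mapsto \exp((1-t)\epsilon w)$---has displacement exactly $-\epsilon w$, yielding $\zeta(1)=0$. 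Worse, when $G$ is a torus (which is allowed, since $G$ is only assumed connected and compact), $L^\zeta(1)=\exp(\zeta(1))$, so $L^\zeta(1)=\id$ forces $\zeta(1)$ into the kernel lattice of $\exp$; no perturbative $\epsilon\to 0$ argument can land on a nonzero lattice point. The actual mechanism behind the cited lemma is Chow--Rashevskii applied to the diagonal distribution on $G\times\mfg$ (together with the lattice argument in the abelian directions), not a small-$\epsilon$ perturbation, so stick with the citation.
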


\begin{proof}[of Proposition \ref{prop:A_tilde_A}]
It suffices to consider $r>0$ small as in Lemma \ref{lem:E_W_ell}.
Consider for now generic $g(0)\in \mfG^\infty$ and $\tilde x \in \CC^\infty(\T^3,E)$ with $|g(0)|_{\CC^3}+|\tilde x|_{\CC^3}\lesssim 1$.
Consider $M\gg 1$ and $t = M^{-q'}$, where $q'>q$ so that $t<\tau$ for $\tau = M^{-q}$ as in Section \ref{sec:expectation-loop}.
Then, by Lemma \ref{lem:SHE_t_eps} and Markov's inequality, $\P[Q_t^c]\lesssim M^{-2q'} = t^2$.
In particular,
$\E W_\ell (\CF_s A) = \E W_\ell (\CF_s A) \bone_{Q_t} + O(t^2)$,
and similarly for 
$\E W_\ell(\CF_s \tilde A)\bone_{Q_t}$, so it suffices to show that, for $t>0$ sufficiently small,
\begin{equ}[eq:E_W_diff_suffice]
|\E \{W_\ell (\CF_s \tilde A) - W_\ell(\CF_s A)\}\bone_{Q_t}| \gtrsim t^{1+r}\;.
\end{equ}
Let $k$ be as in Lemma \ref{lem:E_W_ell} so that the term $O(t^{1+r+ \beta/6} + t^{1+3r/2} + t^{1+r + \nu \beta})$ appearing in \eqref{eq:E_W_diff} 
is bounded from above by $M^k(t^{1+r+ \beta/6} + t^{1+3r/2} + t^{1+r + \nu \beta})$.
Then, by taking $q'$ large, the final quantity is of order $o(t^{1+r})$ with a proportionality constant that does not depend on $M$.

Recall $c\in L(\mfg^3)$ with $c_1\neq 0 \in L(\mfg^3,\mfg)$ in the statement of Proposition \ref{prop:A_tilde_A}.
We write $c_1(A_1,A_2,A_3) = \sum_{i=1}^3 c_1^{(i)}A_i$ where $c_1^{(i)}\in L(\mfg)$.
It suffices to consider the cases $c_1^{(1)}\neq 0$ and $c_1^{(2)}\neq 0$.

\textbf{Case 1: $c_1^{(1)}\neq 0$.}
Let $\zeta\in \CC^\infty([0,1],\mfg)$ be as in Lemma~\ref{lem:curve_selection} with $j=\jmath\circ c_1^{(1)}$ for any $\jmath\in L(\mfg,\R)$ that makes $j$ non-zero.
In particular $c^{(1)}_1\zeta(1)\neq 0$.

Define $u\in\CC^\infty(\T^3,G)$ by $u(x,y,z)=L^\zeta(x)$ for $(x,y,z)\in [0,1)^3$, where $L^\zeta$ is as in Lemma~\ref{lem:curve_selection}\ref{pt:targets}
and we recall that we identify $\T^3$ with $[0,1)^3$.
Remark that $\d_2 u = \d_3 u =0$ and that indeed $u\colon \T^3\to G$ is smooth.
Define $g(0) = u$, $h(0) = (\mrd u)u^{-1}$, and $\tilde x = (A(0),0)\in\CC^\infty(\T^3,\mfg^3\oplus\higgsvec)$ where
\begin{equ}
A(0) = t^{r}ch(0)\;.
\end{equ}
In particular, $h_i \equiv 0$ for $i=2,3$ and, by our choice of $\ell$,
\[
\int_\ell c h(0)  =\int_\ell c_1^{(1)} h_1(0) = c_1^{(1)} \zeta(1)\;.
\]

Observe now that, if $\Trace(c^{(1)}_1\zeta(1))\neq 0$,
then Lemma \ref{lem:E_W_ell} implies
\begin{equ}
|\E W_\ell (\CF_s \tilde A) \bone_{Q_t}
- \E W_\ell(\CF_s A)\bone_{Q_t}| \gtrsim t\;,
\end{equ}
which in turn clearly implies \eqref{eq:E_W_diff_suffice} and we are done.

If, on the other hand, $\Trace(c^{(1)}_1\zeta(1)) = 0$, then the first term on the right-hand side of \eqref{eq:E_W_diff} vanishes.
Therefore, recalling that $\int_{[0,1]}\mrd \ell_{ch(0)}(x) = c_1^{(1)}\zeta(1)\neq0$ and $A(0)=t^r ch(0)$, we are left with
\begin{equ}
|\E W_\ell (\CF_s \tilde A) \bone_{Q_t}
- \E W_\ell(\CF_s A)\bone_{Q_t}|
\gtrsim t^{1+r} \big|\Trace \big( \{c_1^{(1)}\zeta(1)\}^2 \big)\big| - o(t^{1+r}) \gtrsim t^{1+r}
\end{equ}
as required.

\textbf{Case 2: $c^{(2)}_1\neq 0$.} This case is easier and does not require Lemma \ref{lem:curve_selection}.
It is similar to Case 2 of the proof of \cite[Prop.~8.5]{chevyrev2023invariant}, but we give the details for completeness.
Define $u\in \CC^\infty(\T^3,G)$ as follows.
Let $X\in\mfg$ such that $c^{(2)}_1 X \neq 0$.
Consider smooth $\psi\colon [-\frac12,\frac12]\to [0,1]$
such that $\psi(0)=\psi(y)=0$ for all $|y|\geq \frac14$
and $\dot \psi(0)=1$.
We then define $u\colon \T^3 \to G$ by
\begin{equ}
u(x,y,z) =
\begin{cases}
e^{\psi(y)X} &\quad\text{ if } y\in[0,\frac14]\;,\\
1 &\quad\text{ if } y\in[\frac14,\frac34]\;,\\
e^{\psi(y-1)X} &\quad\text{ if } y\in[\frac34,1]\;.
\end{cases}
\end{equ}
Then $u$ is smooth and $\d_1 u = \d_3 u =0$.
Furthermore $h=(\mrd u)u^{-1}$ satisfies $h_1=h_3\equiv 0$ and
\[
h_2(x,0,0)\eqdef (\d_2 u) u^{-1}(x,0,0) = X\qquad  \mbox{for all  } x\in [0,1]
\]
where we used  $\dot \psi(0)=1$.
In particular, recalling the definition of $\ell$, one has 
\[
\int_\ell ch = \int_0^1 c^{(2)}_1 h_2(x,0,0) \mrd x = c^{(2)}_1 X\;.
\]
The conclusion follows exactly as in Case 1 upon subdividing into the cases $\Trace c^{(2)}_1X\neq 0$ and $\Trace c^{(2)}_1X = 0$.
\end{proof}

\section{Symbolic index}

We collect in this appendix commonly used symbols of the article, together
with their meaning and, if relevant, the page where they first occur.

\begin{center}
\renewcommand{\arraystretch}{1.1}
\begin{longtable}{lll}
\toprule
Symbol & Meaning & Page\\
\midrule
\endfirsthead
\toprule
Symbol & Meaning & Page\\
\midrule
\endhead
\bottomrule
\endfoot
\bottomrule
\endlastfoot
$C^\eps_{\YM / \Higgs}$  & BPHZ `constants' $C^\eps_{\YM} \in L_G(\mfg)$ and $C^\eps_{\Higgs}\in L_G(\higgsvec)$ &  \pageref{page:BPHZ} \\
$\mathring C_{\A / \Phi}$  & Arbitrary operators $\mathring C_{\A} \in L(\mfg^3)$ and $\mathring C_\Phi \in L(\higgsvec)$ &  \pageref{page:C_bare} \\
$C^\eps_{\A / \Phi}$  & Total renormalisation operators $C^\eps_{\A / \Phi} = C^\eps_{\YM / \Higgs} + \mathring C_{\A / \Phi}$ &  \pageref{page:C_eps} \\
$E$  & YM-Higgs target space $E= \mfg^3\oplus \higgsvec$ &  \pageref{page:E} \\
$\CF_s$  & YM heat flow $\{\CF_s(a)\}_{s\geq 0}$  &  \pageref{page:CF} \\
$F^\sol$  & Space of paths $f\colon [0,1] \to F\sqcup \{\skull\}$ with possible blow-up &  \pageref{page:sol} \\
$G$ & Compact Lie group $G \subset \U(N)$ &  \pageref{page:G} \\
$\mfG^\rho$ & Gauge group $G^\rho = \CC^\rho(\T^3,G)$ &  \pageref{page:mfG} \\
$\mfg$ & Lie algebra $\mfg\subset\mfu(N)$ of $G$ &  \pageref{page:mfg} \\
$g\act$ & Action of gauge group &  \pageref{e:gauge-transformation} \\
$L(F)$ & Linear operators $F\to F$ &  \pageref{page:L} \\
$L_G$ & Linear operators commuting with action of $G$ &  \pageref{page:L} \\
$\CN_s$ & Quadratic functional $\CN_s(A) \eqdef P_s A\otimes \nabla P_s A$ & \pageref{eq:CN_def} \\
$O$  & Space-times set $O=[-1,2]\times\T^3$  &  \pageref{page:O} \\
$\CP$  & Integration against heat kernel on modelled distributions  &  \pageref{page:CP} \\
$\CP\star$  & Integration against heat kernel of space-time distributions  &  \pageref{eq:P_star_def} \\
$P_t$  & Heat semigroup $(P_t)_{t\geq 0}$  &  \pageref{page:P}\\
$\SYMH_t$  & Solution $\SYMH(C,x)\in\state^\sol$ of the SYMH at time $t \in [0,1]$ &  \pageref{page:SYMH}\\
$\state$  & State space from \cite{CCHS_3D} &  \pageref{def:state}\\
$\SState$  & Refined state space with better bounds on $\CN_s$ &  \pageref{def:SState}\\
$\Sigma$  & Metric / size functional on $\SState$ &  \pageref{def:SState}\\
$\higgsvec$  & Target space of Higgs field, a real Hilbert space  &  \pageref{page:higgsvec}\\
$W_\ell$  & Wilson loop observable &  \pageref{page:W_loop}\\
$\bullet^\ym$ & $\mfg^3$ (i.e. YM) component of $\mfg^3\oplus\higgsvec$-valued field&  \pageref{page:ym}\\
$\Omega_{\gr\gamma}$ & Space of distributions with finite `growth' norm $|\cdot|_{\gr\gamma}$ &  \pageref{page:gr}\\
$\quadnorm{\bullet}_{\gamma,\delta}$ & Time-weighted norm on $\CN_s(\bullet)$ &  \pageref{page:quadnorm}\\
\end{longtable}
\end{center}

{\bf Data Availability Statement.} Data sharing not applicable to this article as no datasets were generated or analysed during the current study.

\endappendix

\bibliographystyle{./Martin}
\bibliography{./refs}

\end{document}